\newcommand{\abs}[1]{\lvert#1\rvert}
\newcommand{\br}{\bar{\rho}}
\newcommand{\cA}{\mathcal{A}}
\newcommand{\cE}{\mathcal{E}}
\newcommand{\cH}{\mathcal{H}}
\newcommand{\cL}{\mathcal{L}}
\newcommand{\cM}{\mathcal{M}}
\newcommand{\cP}{\mathcal{P}}
\newcommand{\CC}{\mathbb{C}}
\newcommand{\DD}{\mathbb{D}}
\newcommand{\FF}{\mathbb{F}}
\newcommand{\ip}[2]{\langle #1, #2 \rangle}
\newcommand{\nr}{\abs{\br}^2}
\newcommand{\op}[1]{\operatorname{#1}}
\newcommand{\PP}{\mathbb{P}}
\newcommand{\PM}[4]{ \begin{pmatrix} #1 & #2 \\ #3 & #4 \end{pmatrix} }
\newcommand{\PV}[2]{ \begin{pmatrix} #1  \\ #2  \end{pmatrix} }
\newcommand{\QQ}{\mathbb{Q}}
\newcommand{\RR}{\mathbb{R}}
\newcommand{\SV}[2]{\bigl( \begin{smallmatrix} #1  \\ #2  \end{smallmatrix} \bigr) }
\newcommand{\ZZ}{\mathbb{Z}}
\DeclareMathOperator{\Aut}{Aut}
\DeclareMathOperator{\Conv}{Conv}
\DeclareMathOperator{\Cox}{Cox}
\DeclareMathOperator{\Geod}{Geod}
\DeclareMathOperator{\md}{md}
\DeclareMathOperator{\pr}{pr}
\providecommand{\abs}[1]{\lvert#1\rvert}
\newcommand{\arhh}[1]{\ar@{->>}[#1]}
\newcommand{\e}[1]{\ar@{-}[#1]}
\newcommand{\ed}[1]{\ar@{--}[#1]}
\newcommand{\ee}[1]{\ar@{=}[#1]}
\newcommand{\er}[1]{\ar@{-}[#1] |-{\SelectTips{cm}{}\object@{<}} |{\SelectTips{eu}{}\object@{}}}
\newcommand{\el}[1]{\ar@{-}[#1] |-{\SelectTips{cm}{}\object@{>}} |{\SelectTips{eu}{}\object@{}}}
\newcommand{\edr}[1]{\ar@{--}[#1] |-{\SelectTips{cm}{}\object@{<}} |{\SelectTips{eu}{}\object@{}}}
\newcommand{\edl}[1]{\ar@{--}[#1] |-{\SelectTips{cm}{}\object@{>}} |{\SelectTips{eu}{}\object@{}}}
 \newcommand{\lul}[1]{\ar@{}[l]_<<{#1}}
\newcommand{\rrul}[1]{\ar@{}[r]^<<<<{#1}}
\newcommand{\rul}[1]{\ar@{}[r]^<<{#1}}
\newcommand{\ldl}[1]{\ar@{}[l]^<<{#1}}
\newcommand{\rdl}[1]{\ar@{}[r]_<<{#1}}
\newcommand{\dl}[1]{\ar@{}[d]_<<{#1}}
\newcommand{\dll}[1]{\ar@{}[dd]_{#1}}
\newtheorem*{thm}{Theorem}
\newtheorem{theorem}{Theorem}[section]
\newtheorem{lemma}[theorem]{Lemma}
\theoremstyle{definition}
\newtheorem{definition}[theorem]{Definition}
\newtheorem{conjecture}[theorem]{Conjecture}
\newtheorem{topic}[theorem]{}
\theoremstyle{remark}
\newtheorem{remark}[theorem]{Remark}
\newtheoremstyle{head}% name
{}% Space above
{}% Space below
{\bfseries}% Body font
{}% Indent amount (empty = no indent, \parindent = para indent)
{}% Thm head font
{}% Punctuation after thm head
{.5em}% Space after thm head: " " = normal interword space;
\theoremstyle{head}
\begin{document}
%
%*******************************************************************************************************
%
%
%*******************************************************************************************************
%
\title[el2]{The complex Lorentzian Leech lattice and the bimonster (II)}
\author{Tathagata Basak}
\address{Department of Mathematics\\Iowa State University, \\Ames, IA 50011}
\email{tathagat@iastate.edu}
\urladdr{http://orion.math.iastate.edu/tathagat}
\keywords{Complex hyperbolic reflection group, Leech lattice, bimonster, Artin group, orbifold fundamental group, generator and relation}
\subjclass[2000]{Primary 11H56, 20F05, 20F55; Secondary 20D08, 20F36, 51M10}
%
%(11H : geometry of numbers) 11H56: Automorphism groups of Lattices
%
%(20D : Abstract finite groups) 20D08 Simple groups: sporadic groups
%
%20F : Special aspects of finite or infinite groups
%20F05 Generators, relations, and presentations, 20F55: Reflection and Coxeter groups, 20F36: Braid groups; Artin groups
%
%(51F : Metric geometry) 51F15: Reflection groups, reflection geometries
%(51M : Real and complex geometry) 51M10 : Hyperbolic and elliptic geometries (general) and generalizations
%
%
\date{April 4, 2012}
\begin{abstract} Let $D$ be the incidence graph of the projective plane over $\FF_3$.
The Artin group of the graph $D$ maps onto the bimonster and a complex hyperbolic
reflection group $\Gamma$ acting on $13$ dimensional complex hyperbolic space $Y$.
The generators of the Artin group are mapped to elements of order $2$ (resp. $3$)
in the bimonster (resp. $\Gamma$). Let $Y^{\circ} \subseteq Y$ be the complement of
the union of the mirrors of $\Gamma$. Daniel Allcock has
conjectured that the orbifold fundamental group of $Y^{\circ}/\Gamma$ surjects
onto bimonster. 
\par
In this article we study the reflection group $\Gamma$. 
Our main result shows that there is homomorphism from the
Artin group of $D$ to the orbifold fundamental group of $Y^{\circ}/\Gamma$,
obtained by sending the Artin generators to the generators of monodromy around the
mirrors of the generating reflections in $\Gamma$. 
This answers a question in Allcock's article ``A monstrous proposal'' and takes
a step towards the proof of Allcock's conjecture. The finite group
$ \op{PGL}(3, \FF_3)  \subseteq \Aut(D)$ acts on $Y$ and fixes a complex hyperbolic line pointwise.
 We show that the restriction of $\Gamma$-invariant meromorphic automorphic forms on $Y$
to the complex hyperbolic line fixed by $\op{PGL}(3, \FF_3)$ gives meromorphic
modular forms of level $13$.
\end{abstract}
\maketitle
%
%{\bf {\large The complex Lorentzian Leech lattice and the bimonster (II)}}
%\par
%\vspace{.5cm}
%Author : {\large Tathagata Basak}
%\par
%\vspace{.5cm}
%{\small Address : Department of Mathematics\\Iowa State University, \\Ames, IA 50011}
%\par
%{\small email : tathagat@iastate.edu}\\
%
%
%*******************************************************************************************************
%
%
%
\section{Introduction}
This article is a continuation of \cite{TB:EL}. Here we continue our study
of the reflection group of the complex Lorentzian Leech lattice. Before describing
our results (see \ref{t-summary}), we briefly recall the context, which makes the study of this
particular reflection group interesting.
\begin{topic}{\bf Some notation and background: }
Let $\DD$ be a graph with vertex set $\lbrace x_1, \dotsb, x_k \rbrace$.
Let $\cA(\DD)$ be the group generated by $k$ generators, also denoted by
$x_1, \dotsb, x_k$, and the relations 
\begin{align*}
x_i x_j &= x_j x_i \text{\; if \;} \lbrace x_i, x_j \rbrace \text{\; is not an edge of \;} \DD, \\
x_i x_j x_i &= x_j x_i x_j \text{\; if \;} \lbrace x_i, x_j \rbrace \text{\; is an edge of \;} \DD, 
\end{align*}
for all $i$ and $j$. The group $\cA(\DD)$ is called the {\it Artin group} of the graph $\DD$.
Let $\Cox(\DD,n)$ be the quotient of $\cA(\DD)$, obtained by imposing the relations $x_i^n = 1$ for
all $x_i \in \DD$. In this notation, the Coxeter group of the graph $\DD$ is $\Cox(\DD,2)$.
\par
The wreath product of the monster simple group with $\ZZ/2\ZZ$ is known as the
bimonster. Conway and Norton conjectured a simple presentation of the bimonster
that later became the Ivanov-Norton theorem. 
\begin{thm}[\cite{AAI:Geom}, \cite{AAI:Y}, \cite{SN:CM}]
Let $M_{6 6 6}$ the graph, shaped like an `` $Y$'', with $16$ vertices (six
in each hand including the central vertex). Label the successive vertices in the
$i$--th hand by $a, b_i, c_i, d_i, e_i, f_i$ with $a$ being the central vertex.
Then $\Cox(M_{6 6 6}, 2)$ maps onto the bimonster and the kernel is
generated by the single relation $(a b_1 c_1 a b_2 c_2 a b_3 c_3)^{10} = 1$.
\end{thm}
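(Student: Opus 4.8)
The statement has two halves, and I would prove them separately: that there is a surjection $\Cox(M_{666},2) \twoheadrightarrow \mathbb{B}$ onto the bimonster $\mathbb{B} = M \wr \ZZ/2\ZZ$ carrying the $16$ Coxeter generators to involutions, and that its kernel is exactly the normal closure of $(ab_1c_1ab_2c_2ab_3c_3)^{10}$. For the surjection I would work inside $\mathbb{B}$ and, following Conway, exhibit $16$ involutions $\hat a, \hat b_i, \hat c_i, \hat d_i, \hat e_i, \hat f_i$ realizing the diagram $M_{666}$: one checks that the product of two of them has order $3$ when the corresponding nodes are joined by an edge and order $2$ otherwise. As these are precisely the defining relations of $\Cox(M_{666},2)$, the assignment $x \mapsto \hat x$ extends to a homomorphism $\phi$. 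A direct computation in $\mathbb{B}$ then shows that $\phi$ sends the spider word to an element of order $10$, so the deflation relation holds and $\phi$ descends to the quotient; surjectivity is checked by verifying that the $16$ images lie in no proper subgroup of $\mathbb{B}$.

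The real content is the second half, that no further relations are needed. Set $G = \Cox(M_{666},2)/\langle\langle (ab_1c_1ab_2c_2ab_3c_3)^{10}\rangle\rangle$, so that the induced surjection $\bar\phi \colon G \twoheadrightarrow \mathbb{B}$ is an isomorphism as soon as $|G| \le |\mathbb{B}|$. Here I would use the amalgam method: locate inside $G$ a family of subgroups — the stabilizers of the flags of a geometry on which $G$ acts — and show, using only the defining relations, that this amalgam is isomorphic to the corresponding standard amalgam in $\mathbb{B}$. Simple connectedness of the associated geometry (equivalently of its chamber system) then identifies $\mathbb{B}$ with the universal completion of that amalgam, forcing $G \cong \mathbb{B}$.

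The main obstacle is exactly this upper bound on $|G|$. Producing the surjection is, once Conway's $16$ involutions are in hand, essentially a finite verification; but showing that the single deflation relation collapses the infinite Coxeter group all the way down to the finite bimonster, and no further, requires genuine structural input about the monster. One must recover enough of its $2$-local geometry from the abstract presentation of $G$ and prove that geometry simply connected — the step where the arguments of Ivanov and Norton, supplemented by substantial machine coset enumeration, do the real work, and where any self-contained treatment must confront the full intricacy of the monster's subgroup structure.
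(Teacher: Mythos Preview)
The paper does not prove this theorem; it is quoted in the background section with attribution to Ivanov and Norton, and no argument for it is given anywhere in the text. There is therefore no ``paper's own proof'' to compare your proposal against.

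That said, your outline is an accurate high-level sketch of how the cited works actually proceed: Conway's sixteen involutions in the bimonster give the surjection, a direct check shows the spider word has order $10$ in the target, and the hard direction --- bounding $|G|$ from above --- is handled by Ivanov's geometric characterization of the monster via an amalgam/simple-connectedness argument, with Norton's work supplying the group-theoretic identifications. You are also right that this last step is where the genuine difficulty lies and that it cannot be made fully self-contained without importing substantial machinery about the monster's local structure. So your proposal is not wrong; it simply addresses a result the present paper takes as input rather than proves.
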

Using the Ivanov-Norton theorem, Conway et.al. obtained a second presentation
of the bimonster. 
Let $D$ be the incidence graph of the projective plane over the finite field $\FF_3$.
The graph $M_{6 6 6}$ is a maximal sub-tree of $D$, so there is a natural map from
$\Cox(M_{6 6 6}, 2)$ to $\Cox(D,2)$.
\begin{thm}[\cite{CCS:26}, also see \cite{CNS:B}, \cite{CP:H}]
The surjection from $\Cox(M_{6 6 6}, 2)$ to the bimonster, extend to a surjection from
$\Cox(D, 2)$ to the bimonster. The kernel is generated by some 
explicitly described simple relations called ``deflating the $12$--gons''.
\end{thm}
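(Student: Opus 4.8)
The plan is to build the surjection $\psi\colon\Cox(D,2)\to\mathbb{B}$ (where $\mathbb{B}$ denotes the bimonster) directly on generators, and then to identify its kernel by a Hopficity argument that plays the Ivanov--Norton presentation against the one being proved. Write $\phi\colon\Cox(M_{666},2)\twoheadrightarrow\mathbb{B}$ for the surjection of the previous theorem, so that $\mathbb{B}=\Cox(M_{666},2)/\Ideal{r}$ with $r=(ab_1c_1ab_2c_2ab_3c_3)^{10}$, and let $\iota\colon\Cox(M_{666},2)\to\Cox(D,2)$ be the natural map. I would send the $16$ generators of $\Cox(D,2)$ that come from $M_{666}$ to their $\phi$--images, and assign to each of the $10$ remaining point/line nodes an explicit word in these images, dictated by the incidence structure of $PG(2,3)$. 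To see that $\psi$ is well defined one must check that every defining relation of $\Cox(D,2)$ maps to a relation in $\mathbb{B}$: braiding across each incident pair and commutation across each non-incident pair. Since $\psi\iota=\phi$ is onto, $\psi$ is automatically onto, which already yields the asserted extension.

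For the kernel, let $N=\Ideal{\rho_C}$ be the normal closure of the deflation relators $\rho_C$, one for each $12$--gon $C$ of $D$, and set $G=\Cox(D,2)/N$. The first, easier inclusion is $N\subseteq\ker\psi$, i.e.\ that each deflation holds in $\mathbb{B}$. Because $\Aut(D)\supseteq L_3(3)$ permutes the $12$--gons with only a few orbits, this reduces to evaluating $\psi(\rho_C)$ on one representative per orbit and checking triviality; granting it, $\psi$ descends to a surjection $\bar\psi\colon G\twoheadrightarrow\mathbb{B}$.

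The substance of the theorem is the reverse inclusion, and I would obtain it not by attacking $\ker\psi$ directly but by producing a surjection the other way and invoking that a finite group is Hopfian. Working inside $G$, I would establish two facts: first, that each of the $10$ extra generators equals an explicit word in the $16$ generators coming from $M_{666}$ --- deflating the $12$--gons through a given extra node lets one solve for that node --- so that $G$ is generated by the $M_{666}$--nodes; and second, that the relator $r$ becomes trivial in $G$. Since the relations of $\Cox(M_{666},2)$ already hold in $\Cox(D,2)$ (this is precisely what makes $\iota$ well defined) they hold in $G$, so the second fact makes the assignment of each $M_{666}$--generator to its image in $G$ into a homomorphism $\alpha\colon\mathbb{B}\to G$, and the first fact makes $\alpha$ onto. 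Then $\bar\psi\alpha\colon\mathbb{B}\to\mathbb{B}$ is a surjective endomorphism of the finite group $\mathbb{B}$, hence an automorphism; so $\alpha$ is also injective, therefore an isomorphism, and $G\cong\mathbb{B}$. As $\bar\psi$ is then an isomorphism with $\bar\psi\circ q=\psi$ for the quotient map $q\colon\Cox(D,2)\to G$, we conclude $\ker\psi=N$, which is exactly the statement that the kernel is generated by the deflations.

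The main obstacle is the pair of facts established inside $G$, and especially the second one: extracting the single global relation $r$ from the purely local deflation relations is the one point where real manipulation in the non-abelian group $\Cox(D,2)$ cannot be avoided, and where the geometry of $PG(2,3)$ and the $L_3(3)$--symmetry must be exploited to keep the word problem manageable. By comparison the relation checks in the construction of $\psi$ and the inclusion $N\subseteq\ker\psi$ are finite verifications that the symmetry collapses to a handful of configurations, while eliminating the extra generators is bookkeeping once a single deflation is understood, since every extra node lies on a $12$--gon that meets the subtree. I therefore expect the proof to turn on a felicitous choice of the words representing the extra generators together with the order in which the $12$--gons are deflated, so that both facts come out at once.
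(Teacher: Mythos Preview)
The paper does not prove this theorem. It is stated in the background section with attributions to \cite{CCS:26}, \cite{CNS:B}, \cite{CP:H}, and no proof is given or attempted anywhere in the text; the result is simply quoted as context for the analogous statements about the reflection group $R(L)$. So there is no ``paper's own proof'' to compare your proposal against.

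That said, your outline is a coherent and essentially correct strategy, and it is in the spirit of the Conway--Simons argument: construct $\psi$ on generators, verify the deflation relations hold in the bimonster so that $\psi$ factors through $G=\Cox(D,2)/N$, then show $G$ is already the bimonster by producing a surjection $\alpha\colon\mathbb{B}\to G$ and using that finite groups are Hopfian. Your identification of the hard step is accurate: deducing the spider relator $r=(ab_1c_1ab_2c_2ab_3c_3)^{10}$ inside $G$ from the deflation relations is where the actual work lies, and that is precisely what the cited papers carry out. One small comment: in the present paper the author notes (in the proof of Lemma~\ref{lemma-14}) that $2.\op{L}_3(3)$ acts transitively on marked $12$--gons in $D$, so your reduction of the inclusion $N\subseteq\ker\psi$ to a single orbit check is in fact a reduction to a single $12$--gon, not several.
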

We need a bit of notation to introduce the reflection group. Let $\omega = e^{2 \pi i/3}$
and $\cE = \ZZ[\omega]$. Define the $\cE$--lattice $L$ to be the direct sum of the complex
Leech lattice and a hyperbolic cell (See 2.4-2.5 of \cite{TB:EL}).
Equivalently, we may define $L$ to be  the unique Hermitian lattice defined over $\cE$ such
that $(2 +  \omega) L' = L$ (where $L'$ denotes the dual lattice of $L$). 
Let $Y = \PP_+(L^{\CC})$ be the set of complex lines of positive norm in the complex
vector space $L^{\CC} = L \otimes_{\cE} \CC$. The space $Y$ is isomorphic
to $13$ dimensional complex hyperbolic space. The projective automorphism
group $\PP \Aut(L)$ acts faithfully on $Y$. We write $\Gamma = \PP \Aut(L)$.
\par
Daniel Allcock showed in \cite{DA:Leech} that the reflection group of $L$, denoted by $R(L)$,
has finite index in $\Aut(L)$. So $R(L)$ is an arithmetic subgroup of $U(1,13)$. This example
provides the largest dimension in which an arithmetic hyperbolic reflection group is known.
Allcock also observed that there is a map $\phi: \Cox(M_{6 6 6}, 3) \to R(L)$ sending the
generators to complex reflections of order three. While trying to better understand the
reflection group of $L$, we made the following observations:
\begin{theorem}[\cite{TB:EL}] 
(a) The map $\phi: \Cox(M_{6 6 6}, 3) \to R(L)$ is onto and it
extends to a surjection $ \phi : \Cox(D, 3) \to R(L)$. In other words, there are $26$ complex
reflections of order $3$ (called {\it simple reflections}) in the reflection group of $L$ which braid
or commute according to the diagram $D$ and these reflections generate
$R(L)$. (The proof of this fact, given in \cite{TB:EL} used a computer
calculation. Since then, Allcock has found a computer free proof; see \cite{DA:Y555}).
\newline
(b) One has $\Aut(L) = R(L)$. The ``deflation relation'' holds in $R(L)$. 
\newline
(c) (see prop. 6.1 in \cite{TB:EL}) Let $D^{\bot}$ be the set of $26$ mirrors, fixed by the $26$
simple reflections. The group $\Gamma$ possesses a subgroup $Q \simeq 2.\op{PGL}(3, \FF_3)$ (called the group
of diagram automorphisms) which permutes the mirrors $D^{\bot}$ in the same way as $2.\op{PGL}(3, \FF_3)$ permutes
the points  and lines of $\PP^2(\FF_3)$. The action of $Q$ fixes a unique point $\br$ in
the complex hyperbolic space $Y$. The set $D^{\bot}$ consists of precisely the mirrors that
are closest to $\br$. 
\label{th-el}
\end{theorem}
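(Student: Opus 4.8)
The plan is to treat the three parts in order, driving everything from an explicit list of roots.

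For (a), I would first fix $26$ roots $r_1,\dots,r_{26}\in L$, one for each point and each line of $\PP^2(\FF_3)$, each primitive with $\aip{r_i}{r_i}=3=\theta\bar\theta$ (where $\theta=\omega-\bar\omega$) and satisfying the divisibility condition $\ip{x}{r_i}\in\theta\cE$ for all $x\in L$ that makes the order-$3$ triflection
\[
\phi_i(x)=x+(\omega-1)\frac{\ip{x}{r_i}}{\ip{r_i}{r_i}}\,r_i
\]
an $\cE$--automorphism of $L$. The incidence graph $D$ is then read off the Gram matrix: I would verify by direct computation that $\ip{r_i}{r_j}=0$ when the $i$-th and $j$-th nodes are non-incident and $\aip{r_i}{r_j}=\sqrt3$ when they are incident. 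The standard identities for two equal-norm triflections convert these two inner-product values into the commutation relation $\phi_i\phi_j=\phi_j\phi_i$ and the braid relation $\phi_i\phi_j\phi_i=\phi_j\phi_i\phi_j$ respectively; together with $\phi_i^3=1$ this shows $x_i\mapsto\phi_i$ defines a homomorphism $\phi:\Cox(D,3)\to R(L)$ which restricts on the maximal subtree $M_{666}$ to Allcock's map.

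The real content of (a) is surjectivity of $\phi$, and I expect this to be the main obstacle. I would show that $\Gamma_0=\an{\phi_1,\dots,\phi_{26}}$ equals $R(L)$ by a Vinberg-type height reduction anchored at the vector $\br$ of part (c): for any root $r$, either $r$ already lies in the $\Gamma_0$-orbit of the $r_i$, or some $\phi_i$ strictly decreases $\aip{\br}{r}$; since $\aip{\br}{r}$ takes a discrete set of positive values bounded below by the minimum achieved on the $r_i$ (itself the content of (c)), the process terminates, so $\Gamma_0$ contains a reflection in every root and hence $\Gamma_0=R(L)$. This was the step settled by computer in \cite{TB:EL} and made computer-free in \cite{DA:Y555}. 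For (b) I would start from $R(L)\subseteq\Aut(L)$ and Allcock's theorem that the index is finite \cite{DA:Leech}. Because the mirrors have real codimension $2$ and do not bound a chamber, I would avoid any polytope-symmetry argument and instead use $\br$: one characterises $\br$ intrinsically (as a point of maximal stabiliser, whose nearest mirrors reproduce the configuration $D$, by part (c)), so every $g\in\Aut(L)$ carries $\br$ to another such point. Showing the $R(L)$-orbit of $\br$ exhausts these special points reduces the claim to $\op{Stab}_{\Aut(L)}(\br)$; this stabiliser permutes $D^{\bot}$ and hence maps to $\Aut(D)$, and one checks each resulting symmetry is already realised by a word in the $\phi_i$, giving $\Aut(L)=R(L)$. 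The deflation relation is then verified by evaluating the explicit word on the generators $\phi_i$ and checking it acts on $L^\CC$ as a scalar equal to $1$.

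For (c) I would construct $Q$ by lifting the natural action of $\op{L}_3(3)$ on the points and lines of $\PP^2(\FF_3)$. A collineation permutes the roots $r_i$ while preserving every inner product (these are fixed by incidence alone), so it extends $\CC$-linearly to a Hermitian isometry of $L^\CC$ preserving $L$; since the $r_i$ are only pinned down up to the units of $\cE$, lifting the permutation to honest lattice vectors introduces a sign cocycle and produces the central extension $Q\simeq 2.\op{L}_3(3)$. To locate $\br$, note that a finite group of isometries of the negatively curved space $Y$ has a fixed point, so $Q$ fixes some $\br\in Y$; to see this point is unique I would check that the $Q$-fixed subspace of $L^\CC$ is the single positive line $\CC\br$, by decomposing $L^\CC$ as a $Q$-representation, and concretely $\br$ is a suitably phased sum of the $r_i$. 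Finally, since every mirror is the fixed hyperplane of a triflection with a norm-$3$ root and the hyperbolic distance from $\br$ to such a mirror $r^{\bot}$ is a strictly increasing function of $\aip{\br}{r}$, the assertion that $D^{\bot}$ is the set of closest mirrors amounts to showing that the common value $\aip{\br}{r_i}$ is the strict minimum of $\aip{\br}{r}$ over all roots $r\in L$. I would prove this by expanding $\aip{\br}{r}$ in the coordinates coming from the decomposition of $L$ as complex Leech lattice plus a hyperbolic cell, deriving a lower bound from the minimal norm of the complex Leech lattice, and analysing the equality case, which singles out precisely the $26$ simple roots.
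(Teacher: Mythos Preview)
The paper does not contain its own proof of this theorem: it is quoted from \cite{TB:EL} (with part of (a) redone computer-free in \cite{DA:Y555}) and merely restated with some extra detail in Section~\ref{sec-L}. So there is no in-paper proof to compare against directly; I can only comment on your outline versus what this paper reveals about the original arguments.

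Your plan for (a) matches the original: fix $26$ roots, read braid/commute relations off the Gram matrix, and prove surjectivity by height-reduction with respect to $\br$. That last step is precisely the computer-assisted argument of \cite{TB:EL}.

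For (b), your route via the $R(L)$-orbit of $\br$ has a gap: you would need to show that every point of $Y$ with the same intrinsic characterisation as $\br$ is already $R(L)$-equivalent to $\br$, and that is essentially as hard as the statement you want. What this paper indicates (see the sentence before Lemma~\ref{lemma-14}) is that \cite{TB:EL} proves directly that sixteen explicit reflections generate $\Aut(L)$; once (a) is known, $\Aut(L)=R(L)$ then follows immediately.

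For (c), two corrections. First, the $Q$-fixed subspace of $L^{\CC}$ is not the single line $\CC\br$: the index-two subgroup $\op{L}_3(3)$ fixes the two-dimensional $F^{\CC}$ pointwise, and $\sigma$ has two eigenlines $\CC\br_+$, $\CC\br_-$ there (see \ref{npm}); uniqueness of the fixed point in $Y$ is because only $\br_+$ has positive norm. Second, the ``closest mirrors'' statement is not proved in \cite{TB:EL} by a Leech-lattice minimal-norm bound as you propose. The method of prop.~6.1 there, visible in the parallel computation in Lemma~\ref{meetdelta} of this paper, uses the orthogonal basis $(w_{\cP},x_1,\dots,x_{13})$: one bounds each $\aip{x_i}{r}$ and $\aip{w_{\cP}}{r}$ via distances, reduces to a finite list of candidate root-types (Table~\ref{table-1}), and checks them. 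Since $\br$ is not cleanly expressed in Leech-plus-hyperbolic coordinates, it is unclear your alternative goes through without an enumeration of its own.
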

We are interested in understanding the relationship between $R(L)$ and the bimonster.
Daniel Allcock has made a conjecture to explain this relationship using complex hyperbolic
geometry. (In-fact the conjecture predates \cite{TB:EL}).
\begin{conjecture}[Allcock, see \cite{DA:Monstrous}, \cite{TB:EL}]
\label{c-MP}
{\it Consider the action of $\Gamma = \PP \Aut(L)$ on the complex hyperbolic space $Y$.
Let $\cM$ be the union of the fixed points of the reflections in $\Gamma$. Let
$Y^{\circ} = Y \setminus \cM$. Then the orbifold fundamental group of
$Y^{\circ}/\Gamma$ maps onto bimonster.}
\end{conjecture}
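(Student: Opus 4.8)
The plan is to wedge $\pi_1^{\op{orb}}(Y^{\circ}/\Gamma)$ between the Artin group $\cA(D)$ and the bimonster, using the two presentations recalled above as the two ``ends''. Since $Y^{\circ}$ is a connected complex manifold on which $\Gamma$ acts properly discontinuously, the covering $Y^{\circ} \to Y^{\circ}/\Gamma$ exhibits the quotient as a developable orbifold with manifold cover $Y^{\circ}$, so one has a short exact sequence
\[
1 \longrightarrow \pi_1(Y^{\circ}) \longrightarrow \pi_1^{\op{orb}}(Y^{\circ}/\Gamma) \longrightarrow \Gamma \longrightarrow 1 .
\]
The first step is to produce generators: fix a basepoint near the $\op{L}_3(3)$--fixed point $\br$, and for each of the $26$ mirrors in $D^{\bot}$ choose a small meridian loop $x_i$ encircling it. Because each such mirror is fixed by an order $3$ reflection, $x_i$ projects to that simple reflection in $\Gamma$, while $x_i^3$ is an honest loop lying in $\pi_1(Y^{\circ})$. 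The concrete heart of this first step is to verify, by analysing the local rank two configuration at each pairwise meeting of mirrors, that the $x_i$ satisfy the defining relations of $\cA(D)$ --- incident mirrors meet inside $Y$ in a way that forces the braid relation $x_i x_j x_i = x_j x_i x_j$, while non--incident mirrors are orthogonal (or fail to meet in $Y$) and force $x_i x_j = x_j x_i$. This yields the homomorphism $\psi : \cA(D) \to \pi_1^{\op{orb}}(Y^{\circ}/\Gamma)$, and it is this step that is achieved in the present article.

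Next I would prove that $\psi$ is surjective. Modulo $\pi_1(Y^{\circ})$ this is precisely the statement that the $26$ simple reflections generate $\Gamma$, which is Theorem~\ref{th-el}(a). To lift this to genuine surjectivity one must show that $\pi_1(Y^{\circ})$ is generated by the meridian elements $x_i^3$ together with their $\cA(D)$--conjugates. This is a Zariski--van Kampen statement for the mirror arrangement $\cM$: the fundamental group of the complement of a complex hyperplane-type arrangement is normally generated by its meridians, so after descending to the finite quotient $Y^{\circ}/\Gamma$ one expects $\pi_1(Y^{\circ})$ to be hit by products of the $x_i$. Granting surjectivity, $\psi$ presents the orbifold group as $\pi_1^{\op{orb}}(Y^{\circ}/\Gamma) \cong \cA(D)/\ker\psi$, and the whole problem reduces to identifying $\ker\psi$.

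The concluding step compares $\ker\psi$ with the bimonster kernel. From the factorisation of $\cA(D) \to \Gamma$ through $\psi$, together with Theorem~\ref{th-el}(a),(b), one knows that $\ker\psi$ lies inside the normal closure of the $x_i^3$ and the deflation relations; but $x_i^3 \notin \ker\psi$ (it is the nontrivial meridian of the mirror in $Y^{\circ}$), so I would aim instead at the sharp statement
\[
\pi_1^{\op{orb}}(Y^{\circ}/\Gamma) \;\cong\; \cA(D) \big/ N,
\]
where $N$ is the normal subgroup generated by the deflation relations alone. In other words, the only relations among the meridians beyond the Artin relations of $D$ should be the deflations. Once this is known, imposing the further relations $x_i^2 = 1$ turns $\cA(D)/N$ into $\Cox(D,2)$ modulo deflation, which is exactly Conway et al.'s presentation of the bimonster; hence the order--two specialisation of the meridians furnishes the desired surjection $\pi_1^{\op{orb}}(Y^{\circ}/\Gamma) \twoheadrightarrow \text{bimonster}$.

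I expect the sharp identification of $\ker\psi$ to be the main obstacle, and it is the part that remains open. Concretely it amounts to computing a full presentation of $\pi_1^{\op{orb}}(Y^{\circ}/\Gamma)$ and checking that no relations survive beyond braiding and deflation. By a Deligne--type analysis the relations are contributed by the strata of $\cM$ of codimension $\ge 2$: the codimension two strata give the braid and commutation relations already accounted for, the deflation relations should emerge from the special ``$12$--gon'' configurations, and the genuine work is to rule out any further relations coming from deeper strata or from the global topology of $Y^{\circ}$. Because $\cM$ is an infinite arrangement in $13$--dimensional complex hyperbolic space, carrying this out forces one to use the arithmetic of the lattice $L$ and the $\Gamma$--action to reduce the infinitely many strata to finitely many orbits, and then to control the local monodromy of each orbit --- a substantial undertaking that lies well beyond the construction of $\psi$ obtained here.
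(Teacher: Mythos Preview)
The statement you are attempting is a \emph{conjecture}; the paper does not prove it, and as of the paper's writing it remains open. The paper's actual contribution is precisely your first step: the construction of a homomorphism $\psi : \cA(D) \to G = \pi_1^{\op{orb}}(Y^{\circ}/\Gamma)$ lifting the surjection $\phi : \cA(D) \to \Gamma$ (Theorem~\ref{th-lift}). You correctly identify the remaining steps --- surjectivity of $\psi$ and control of $\ker\psi$ --- as open, so your proposal is really a programme rather than a proof, and should be read as such.

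On the first step, your sketch and the paper's argument diverge in method. You propose a local analysis: at each pairwise meeting of simple mirrors, read off the braid or commutation relation from the rank--two configuration. The paper instead works globally. It fixes the $Q$--fixed point $\br$, defines $g_r$ via an explicit path from $\br$ to $\phi_r(\br)$, and to verify $g_1 g_2 g_1 = g_2 g_1 g_2$ (resp.\ commutation) constructs a concrete $2$--cell $C' \subseteq Y$ bounded by the two composite paths and proves $C' \cap \cM = \emptyset$. The key input is Theorem~\ref{th-el}(c): the simple mirrors are exactly the mirrors closest to $\br$, which bounds the heights of any root whose mirror could meet $C'$ and reduces the check to a finite list of $130$ mirrors, handled directly. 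Your local picture is morally correct for mirrors that actually meet, but note that non--incident simple mirrors (two points, or two lines, of $\PP^2(\FF_3)$) are orthogonal yet do \emph{not} meet inside $Y$ --- their intersection is not positive--definite --- so the commutation relation cannot be read off from a local normal slice and genuinely requires the global homotopy the paper builds.

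On the remaining steps, your plan overshoots. You aim for a full presentation $G \cong \cA(D)/\langle\langle\text{deflation}\rangle\rangle$, which is far stronger than the conjecture demands. The paper's outline is more economical: let $G_1 = \psi(\cA(D))$ and $N = \langle\langle \psi(r)^2 : r \in D\rangle\rangle \trianglelefteq G_1$; then $\Cox(D,2)$ surjects onto $G_1/N$, and one needs only that (i) the deflation relations hold in $G_1/N$, (ii) $\lvert G_1/N\rvert > 2$, and (iii) $\psi$ is onto. Items (i)--(ii) already yield a bimonster quotient of $G_1$ without any knowledge of $\ker\psi$. Also, your argument for surjectivity of $\psi$ has a gap: knowing that $\pi_1(Y^{\circ})$ is normally generated \emph{in $G$} by meridians does not immediately give generation by $\cA(D)$--conjugates of the $x_i^3$, since a priori the conjugating elements lie in $G$, not in $\op{im}\psi$; one needs an extra argument (or a direct attack on surjectivity) to close this.
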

See \cite{DA:Monstrous} for more on this conjecture and its possible ramifications. There are
speculative ideas explored in \cite{DA:Monstrous} regarding a possible candidate for the conjectured
monster manifold and an interpretation of $Y/\Gamma$ as a nice moduli space.
\end{topic}
%
%*******************************************************************************************************
%
\begin{topic}{\bf Summary of results: }  
\label{t-summary}
In section \ref{sec-general}, we recall some basic definitions and set up our notations.
We explain how to construct our lattice $L$ from incidence geometry of $\PP^2(\FF_3)$.
\par
Section \ref{sec-L} recalls the results from \cite{TB:EL} that we need, in some detail.
Lemma 3.2 and theorem 5.8 of \cite{TB:EL} show that $\Aut(L)$ can be generated
by sixteen  complex reflections of order $3$, making an $M_{6 6 6}$ diagram. 
In section \ref{sec-L}, we improve this by  showing that fourteen complex reflections of order $3$
suffice to generate the reflection group of $L$. Since $L$ is $14$ dimensional, these fourteen
reflections, making a $M_{6 5 5}$ diagram, form a minimal set of generators for
$\Aut(L)$.
\par
Section \ref{sec-pi1} contains the bulk of our work. Here we study the fundamental group of the orbifold
$Y^{\circ}/\Gamma$, that we denote by $G$. By definition of $G$ (see \ref{def-G}), there is
an exact sequence,
\begin{equation}
 1 \to \pi_1(Y^{\circ}) \to G \xrightarrow{\pi^G_{\Gamma}} \Gamma \to 1.
\label{exact}
\end{equation}
By the results from \cite{TB:EL} quoted above, we have a surjection $\phi$ from $\cA(D)$ to $\Gamma$.
The main result of section \ref{sec-pi1}  is theorem \ref{th-lift}, which says that the map $\phi$ lifts to a
homomorphism $\psi: \cA(D) \to G$. This answers a question asked by Allcock in \cite{DA:Monstrous}.
\par
To put this theorem in context, we should remark that we expect the fundamental group $G$ to be
something like the Artin group $\cA(D)$, maybe with some extra relations.
This expectation is based on the the analogy of our reflection group $R(L)$
with the Weyl groups and the theorem of Brieskorn, Saito \cite{BK:AG} and Deligne \cite{PD:LI}
on fundamental groups of complements of complex hyperplane arrangements
associated to Weyl groups.
The analogy between the reflection group $R(L)$ and Weyl groups has been a
useful guiding principle in this project.
\par
Theorem \ref{th-lift} is a step towards proving conjecture \ref{c-MP}. To explain this,
let $\tilde{Y}^{\circ}$ be the universal cover of $Y$. The group $G$ acts as Deck transformations
on the ramified covering $(\tilde{Y}^{\circ} \to Y^{\circ}/\Gamma)$. 
Let $G_1 \subseteq G$ be the image of the map $\psi$
\footnote{We expect that $G_1 = G$. 
It seems that the method of proving of theorem \ref{th-lift} might also be useful for proving
$G_1 = G$.}.
Let $N$ be the normal subgroup of $G_1$ generated by $\lbrace \psi(r)^2 \colon r \in D \rbrace$.
Then we have the following tower of ramified covering:
\newline
{\small
\centerline{
 \xymatrix @!=1pc { 
& \tilde{Y}^{\circ}  \ar@{->}[dl] \ar@{->}[dr] & \\
Y^{\circ} \ar@{->}[dr] & & \tilde{Y}^{\circ}/N \ar@{->}[dl] \\ 
& Y^{\circ}/\Gamma &
}}}
The following observations should justify the effort needed to prove the theorem above.
\begin{enumerate}
 \item By theorem \ref{th-lift}, there is a surjection from $\Cox(D,2)$ to $G_1/N$.
If the ``deflation relations'' hold in $G_1/N$ and $G_1/N$ has more
than two elements then $G_1/N$ is the bimonster. On the other hand, $G_1/N$ acts
naturally as deck transformations on the (possibly ramified) covering 
$(\tilde{Y}^{\circ}/N \to Y^{\circ}/\Gamma)$. 
\item In addition to the above, if the map $\psi$ is onto, that is, if $G_1 = G$, then that
proves Allcock's conjecture. If $G_1 = G$, then the deflation relation  probably holds in
$G/N$. Sketch of a geometric argument to prove this was explained to me by Daniel Allcock.
\end{enumerate}
We prove theorem \ref{th-lift} by constructing explicit homotopies
between paths in $Y^{\circ}$ to obtain relations in the group $G$.
Let $\gamma_1$ and $\gamma_2$ be two paths in $Y^{\circ}$ with the same
beginning and endpoints.
To construct a homotopy from $\gamma_1$ to $\gamma_2$ in $Y^{\circ}$, we construct
a $2$--cell $C' \subseteq Y$ whose boundary is $\gamma_1 \cup \gamma_2$
and then show that $C'$ does not intersect the mirrors of reflection. 
The main tool to show that $C'$ avoids the mirrors, is to use theorem \ref{th-el}(c),
which provides some information about the configuration of mirrors near the point
$\br \in Y^{\circ}$ fixed by the group $2. \op{PGL}(3, \FF_3)$ of diagram automorphisms.
This information about the mirror arrangement, together with some complex
hyperbolic geometry, lets us restrict the possible set of mirrors intersecting $C'$ to a finite set.
The proof is completed by directly checking that the remaining finite set of mirrors do not intersect
$C'$.
\par
The unique point $\br$ fixed by the group $Q \simeq 2.\op{PGL}(3, \FF_3)$ of diagram automorphisms, plays an
important role in all the major arguments in \cite{TB:EL} as well as the present article. In view
of this, in section \ref{sec-fixed}, we make a detailed study of the two dimensional lattice $F$
 fixed by the group $\op{PGL}(3, \FF_3) \subseteq Q$.
The complex hyperbolic line $\PP_+(F^{\CC})$ is isometric to the upper half plane $\cH^2$.
We construct an explicit isometry $\beta: \PP_+(F^{\CC}) \to \cH^2$ such that
``cusps of Leech type'' map to $0$ and $\infty$ and
$\br$ maps to $\sqrt{-1}$. Let $\Gamma_F$ be the set of elements of $\Gamma$ that fix $F$ as a set. 
The isometry $\beta$ induces a group homomorphism $c_{\beta}: \Gamma_F \to \op{PSL}_2(\RR)$.
We show that the image of $c_{\beta}$ contains the principal congruence subgroup $\Gamma(13)$.
The group $c_{\beta}(\Gamma_F) \cap \op{PSL}_2(\ZZ)$ has genus zero. In-fact it is a conjugate of
$\Gamma_0(13)$. The diagram automorphism $\sigma$, that correspond to interchanging points and lines
of $\PP^2(\FF_3)$, acts as the involution $(\tau \mapsto -\tau^{-1})$.
\par
This result provides an explicit way to obtain ordinary modular forms from automorphic
forms of type $U(1,13)$ defined on the hermitian symmetric space $Y$. Let $f$ be a meromorphic
automorphic form on $\CC H^{13}$ automorphic with respect to the group $\Gamma$, with zeroes
and poles along the mirrors of reflection. 
From \ref{th-el}(c), we know that the complex hyperbolic line $\PP_+(F^{\CC})$
 is not contained in any mirror, so the restriction of $f$ to $\PP_+(F^{\CC})$ is a well-defined and non-zero,
thus it is a meromorphic modular form of level thirteen. Meromorphic automorphic forms on $Y$ with zeros and poles along the mirrors of reflection can be
constructed by the Borcherds method of singular theta lift (see \cite{REB:Aut} for the general
method and \cite{DA:Leech} for our example). Alternatively, as we shall in \ref{remark-aut},
one can define such automorphic forms as explicit infinite series similar to the Eisenstein series
(also see \cite{EL:CA}, where these are called the Poincare-Weierstrass Series).
These automorphic forms may be useful in obtaining an explicit projective uniformization of
$Y^{\circ}/\Gamma$.
\end{topic}
{\bf Acknowledgments: } I want to thank prof. Jon Alperin, Prof. Richard Borcherds, Prof. John Conway,
Prof. Benson Farb, Prof. George Glauberman, Prof. Stephen Kudla and Prof. John Mckay for encouragement
and useful conversations. Most of all, I want to thank Prof. Daniel Allcock for generously sharing his
insights and ideas on this project.
\begin{topic}{\bf Index of some commonly used notations: } We use the Atlas notation for groups.
\begin{tabbing}
$\cA(D)$ X\= the Artin group of the diagram $D$.X\=\kill
$\cA(D)$ \> the Artin group of the diagram $D$.\\
$\Aut(L)$\> automorphism group of the lattice $L$. \\
$\beta$ \> an isometry from $\PP_+(F^{\CC})$ to $\cH^2$ or the matrix
$\bigl( \begin{smallmatrix} p & \omega \\ 1  & \bar{p} \end{smallmatrix} \bigr)$ that represent it.\\
$\CC H^n$\> the complex hyperbolic space of dimension $n$. \\  
$D$      \>  the incidence graph of $P^2(\mathbb{F}_3)$ or the set of $26$ simple roots of $L$ labeled by\\ 
         \> vertices of this graph. \\ 
$D(K)$      \> the discriminant group of a lattice $K$; i.e., $D(K) = K'/K$. \\          
$\cE$    \>  $=\ZZ[e^{2 \pi i/3}] $. \\
$\epsilon$\> a small positive real number. \\
$F$ \>   the sub-lattice of $L$ fixed by the group $\op{PGL}(3, \FF_3)$ of diagram automorphisms. \\
$G$      \> the orbifold fundamental group of $X^{\circ}$. \\
$\Gamma$ \> the automorphism group of $L$ modulo scalars: $\Gamma = \PP \Aut(L)$. \\
$\op{ht}(r)$ \>  height of a vector $r$, given by $\op{ht}(r) = \abs{\ip{ \br}{ r }}/\abs{\br}^2$.\\
$l$      \>  an element of $\cL$. \\
$L$      \>  the complex Leech lattice plus a hyperbolic cell, defined over $\cE$. \\
$L'$     \>  the dual lattice of $L$.\\
$\cL$    \>  the set of lines of $\PP^2(\FF_3)$ or the simple roots of $L$ that correspond to them.\\
%$\op{L}_3(3)$\> $= \op{PSL}_3(\FF_3) = \op{PGL}_3(\FF_3)$.\\
$\cM$    \>  the union of the mirrors of the reflection group of $L$.\\
$p$      \> $=2 + \omega$.\\
$p_1$      \> $= 3 - \omega$.\\
$\cP$    \>  the set of points of $\PP^2(\FF_3)$ or the simple roots of $L$ that correspond to them.\\
$\phi_r$ \>  $\omega$--reflection in the vector $r$.\\ 
$Q$      \> the group of diagram automorphisms acting on $Y$; one has $Q \simeq 2.\op{PGL}(3, \FF_3)$. \\
$r_i$    \> $r_1, \dotsb, r_{26}$ are the simple roots. \\
$\rho_i$ \> $\rho_i = r_i$ if $r_i \in \cP$, and $\rho_i = \xi r_i$ if $r_i \in \cL$.  \\
$R(L)$   \>  (complex) reflection group of the lattice $L$.\\
$[\br]$  \> the unique point in the complex hyperbolic space $Y$, fixed by $Q$. (see equation \eqref{eq-defrho}). \\
$\sigma$ \> A diagram automorphism that corresponds to interchanging the points and lines \\
         \> of $\PP^2(\FF_3)$. \\
$\theta$ \> $= \omega - \bar{\omega}$.\\
$\omega$ \>  $e^{2\pi i/3}$. \\
$[w_{\cP}]$\> The point of $Y$ where the $13$ mirrors $\lbrace x^{\bot} : x \in \cP\rbrace$ meet. (see equation \eqref{defwpwl})\\ 
$[w_{\cL}]$\> The point of $Y$ where the $13$ mirrors $\lbrace l^{\bot} : l \in \cL\rbrace$ meet. (see equation \eqref{defwpwl})\\
$x$      \>  an element of $\cP$.\\
$\xi$    \> $= e^{-\pi i/6}$.\\
$X^{\circ}$ \> the orbifold  $Y^{\circ}/\Gamma$.\\
$Y$      \> the set of complex lines of positive norm in $ L \otimes_{\cE} \CC$. (Note: $Y \simeq \CC H^{13}$.)\\
$Y^{\circ}$ \> $= Y \setminus \cM$.\\
\end{tabbing}
\end{topic}

%
%*******************************************************************************************************
%
%
%*******************************************************************************************************
%
\section{Some generalities on complex hyperbolic reflection groups}
\label{sec-general}
%
%*******************************************************************************************************
%
\begin{topic}{\bf The complex hyperbolic space: }
A general reference for complex hyperbolic geometry is \cite{WG:CHG}.
Let $V$ be a complex vector space, with an Hermitian form $\ip{\;}{\;}$.
Assume that $V$ is Lorentzian, that is, the Hermitian form on $V$ has signature $(1,m-1)$.
Then the open subset $\PP_+(V)$ of the projective space $\PP(V)$, consisting
of the complex lines of positive norm, is called the complex hyperbolic space
of $V$. If $H \subseteq V$, then let $[H]$ be the subset of $\PP_+(V)$ or $\PP(V)$ 
determined by $H$. We shall often abuse notation and write $H$ for $[H]$, if there
is no possibility of confusion.
\par
Let $\CC^{m,n}$ denote the complex vector space $\CC^{m+n}$ with the Hermitian form
\begin{equation*}
\ip{z}{w} = \sum_{i = 1}^m \bar{z}_i w_i - \sum_{i = m+1}^{m+n} \bar{z}_i w_i.
\end{equation*}
The $n$ dimensional complex hyperbolic space $\CC H^n = \PP_+(\CC^{1,n})$ is
homeomorphic to the unit ball $B^n(\CC) \subseteq \CC^n$ via the isomorphism
$b': [z_0, \dotsb, z_n]' \mapsto (z_1/z_0, \dotsb, z_n/z_0)$. Let $x$ and $y$
be two vectors in $V$ having positive norm. The metric on $\CC H^n$ is given by 
\begin{equation*}
d([x],[y]) = \cosh^{-1} \Bigl(  \frac{\abs{ \ip{x}{y}} }{\abs{x}.\abs{y}} \Bigr).
\end{equation*}
This differs from the metric used in \cite{WG:CHG} by a factor of 2, but the scaling of the metric
is not important for us.
\end{topic}
\begin{topic}{\bf The complex hyperbolic line and the real hyperbolic plane:}
We need to consider two models of the real hyperbolic plane, namely the
unit ball $B^1(\CC)$ and the upper half plane $\cH^2$. It will be convenient to
identify both as subsets of $\PP^1(\CC)$ by $\tau \mapsto 
\bigl( \begin{smallmatrix} \tau \\ 1 \end{smallmatrix} \bigr)$.
\par
The map $b': [z_0, z_1] \to z_1/z_0$  defines an isometry between
the complex hyperbolic line $\CC H^1$ and the (ball model of) real hyperbolic
plane.
Let $\cH^2$ be the upper half plane with the Poincare metric:
$d(\tau, \tau') 
= \cosh^{-1}\bigl(2^{-1} \op{Im}(\tau)^{-\tfrac{1}{2}} \op{Im}(\tau')^{-\tfrac{1}{2}}\abs{\tau' - \bar{\tau}} \bigr)$.
Let $C: B^1(\CC) \to \mathcal{H}^2$ denote the Cayley isomorphism: 
$C(w) =i\bigl( \tfrac{ 1 + w}{1  - w} \bigr) $.
The composition 
\begin{equation*}
b = C \circ b' : \PP_+(\CC^{1,1} ) \to \cH^2
\end{equation*}
is an isometry.
Let $S = \bigl( \begin{smallmatrix} 0 & -1 \\  1 & 0 \end{smallmatrix} \bigr)$
and $T = \bigl( \begin{smallmatrix} 1 & 1 \\  0 & 1 \end{smallmatrix} \bigr)$
be the standard generators of $\op{SL}_2(\ZZ)$. 
\end{topic}
\begin{topic}{\bf Complex lattices and their reflection groups: }
We recall our notations regarding complex reflection groups. In most places,
we maintain the notations of \cite{TB:EL} and refer the reader to section 2.2 of \cite{TB:EL}
for more details. Let $\xi = e^{-\pi i/6}$, $\omega = -\xi^2$ and $\cE = \ZZ[\omega]$. An
{\it $\cE$--lattice $K$} is a free $\cE$--module of finite rank, together with an
$\cE$--valued Hermitian form $\ip{\;}{\;} : K \times K \to \cE$ (always linear
in the second variable). Let $K^{\CC} = K \otimes_{\cE} \CC$ be the underlying Hermitian
vector space of $K$. We shall mainly be concerned with a lattice $L$, for which
$ L^{\CC} \simeq \CC^{1, 13}$. So the definite lattices that we consider will always
be negative definite. Let $K'$ be the {\it dual lattice} of $K$, defined by
$K' = \lbrace v \in K^{\CC} \colon \ip{v}{x} \in \cE \text{\; for all \;} x \in K \rbrace$. 
Let $\op{D}(K) = K'/K$ be the {\it discriminant group} of $K$.
\par 
Let $r$ be a primitive vector of $K$ having negative norm, that is,
$\abs{r}^2 := \ip{r}{r} < 0$. Let $\alpha$ be a root of unity in $\cE$, $\alpha \neq 1$.
A {\it complex reflection} $\phi_r^{\alpha} \in \Aut(K)$ is an automorphism of $K$ that
fixes the hyperplane $r^{\bot}$ orthogonal to $r$ and multiplies $r$ by $\alpha$.
The vector $r$ is called the {\it root} of the reflection and the hyperplane $r^{\bot}$
(or its image in the projective space $\PP(K^{\CC})$) is called the {\it mirror} of
the reflection. The {\it reflection group} of $K$, denoted by $R(K)$, is the subgroup
of $\Aut(K)$ generated by reflections in the roots of $K$.
We write $\phi_r = \phi_r^{\omega}$ and call it the $\omega$--reflection in $r$.
\end{topic}
%
%
%*******************************************************************************************************
%
\begin{topic}{\bf The incidence graph of $\PP^2(\FF_3)$: }
\label{DtoL}
The projective plane over $\FF_3$ has $13$ points and $13$ lines.
Let $\cP$ be the set of points and $\cL$ be the set of lines.
If a point $x \in \cP$ is incident on a line $l \in \cL$, then we write $x \in l$.
Let $D$ be the (directed) incidence graph of $\PP^2(\FF_3)$.
The vertex set of $D$ is $\cP \cup \cL$. There is a directed edge in $D$ from 
a vertex $l$ to a vertex $x$ if $x \in \cP$, $l \in \cL$ and $x \in l$.
\end{topic}
\begin{topic}{\bf Definition of the Lorentzian lattice from geometry of $\PP^2(\FF_q)$: }
\label{defDtoL}
Let $p = 2 + \omega$.
Let $L^{\circ}$ be the free $\cE$--module of rank $26$ with basis vectors
indexed by $D =\cP \cup \cL$. Let $x,x' \in \cP$ and $l,l' \in \cL$. Define a Hermitian
for on $L^{\circ}$ by 
\begin{align}
\ip{x}{x'} = \begin{cases} 
-3 & \text{\;if \;\;} x = x', \\
0 & \text{otherwise.} 
\end{cases}
& &
\ip{l}{l'} = \begin{cases} 
-3 & \text{\;if \;\;} l = l', \\
0 & \text{otherwise.} 
\end{cases}
& &
\ip{x}{l} = \begin{cases} 
p & \text{\;if \;\;} x \in l, \\
0 & \text{otherwise.} 
\end{cases}
\label{ipd}
\end{align}
The lattice $L^{\circ}$ has a $12$ dimensional radical. To see this, let 
$w_l  = \bar{p} l + \sum_{ x \in l} x$ for any  $l \in \cL$.
Then using \eqref{ipd} and the geometry of $\PP^2(\FF_3)$ one easily checks that
\begin{equation} 
\ip{x'}{w_l} = 0 \text{\;\; and \;\;}  \ip{w_l}{l'} = p \text{\;\; or all \;\;} x' \in \cP \text{\; and \;} l' \in \cL.
\label{eq-wl}
 \end{equation}
So if $l_1, l_2 \in \cL$, then 
$(w_{l_1} - w_{l_2})$ is in the radical $\op{Rad}(L^{\circ})$. So $\op{rank}(\op{Rad}(L^{\circ})) \geq 12$.
Define
\begin{equation*} 
L_3 = L^{\circ}/\op{Rad}(L^{\circ}).
 \end{equation*}
The Hermitian form on $L^{\circ}$
descends to a Hermitian form on $L_3$, which is again denoted by $\ip{\;}{\;}$.
Let $w_{\cP}$ be the image of $w_l$ in $L_3$ (for any $l \in \cL$).
Equation \eqref{eq-wl} implies that the vector $w_{\cP}$ is orthogonal (in $L_3$)
to each $x \in \cP$ and $\ip{w_{\cP}}{ l } = p$ for all $l \in \cL$.
It follows that the matrix of dot products of the vectors
$\cP \cup \lbrace w_{\cP} \rbrace$ is a diagonal matrix with diagonal entries 
$( -3, -3, \dotsb, -3, 3)$. So $\op{rank}(L) \geq 14$.
It follows that $\op{rank}(\op{Rad}(L^{\circ})) = 12$, $\op{rank}(L_3) = 14$ and 
$L_3$ is a non-degenerate $ \cE$--module of signature
$(1,13)$.
\end{topic}
\begin{remark}
From the properties of the incidence matrix of $\PP^2(\FF_3)$ one can prove that
$L_3$ is $p$-modular, that is $p L_3' = L_3$. The definition of $L_3$ given in \ref{defDtoL}
can be mimicked for finite projective planes $\PP^2(\FF_q)$
for other values of $q$ and number rings other than $\cE$.
This way one obtains infinitely many $\sqrt{-q}$-modular Lorentzian Hermitian lattices, see
\cite{TB:ML}. If $q \equiv 3 \bmod 4$ is a rational prime and
$(q^2 + q + 1)$ is a norm of some element in $\QQ[\sqrt{-q}]$, then
we get a even unimodular $2q(q+1)$ dimensional $\ZZ$-lattice
whose symmetry group contains $\op{PGL}(3,\FF_q)$.
The first one among these lattices is the Leech lattice.
\end{remark}
%
%*******************************************************************************************************
%
%
%*******************************************************************************************************
%

\section{The reflection group of the Lorentzian Leech lattice}
%
%*******************************************************************************************************
%
%
%*******************************************************************************************************
%
\label{sec-L}
Chiefly to set up notation, we need to recall a few results from \cite{TB:EL}
in some detail. Then we prove a couple of small lemmas. As before,
let $L$ be the direct sum of the complex Leech lattice and a hyperbolic cell.
Let $L_3$ be the lattice defined in \ref{defDtoL}.
\begin{topic}{\bf Observation:} {\it Since both $L$ and $L_3$ 
have signature $(1,13)$ and satisfy $pL' = L$ and $p L_3' = L_3$, it follows that $L \simeq L_3$.
This can also be directly proved by showing that the lattice $L$ contains $26$ vectors of norm $-3$
(called the {\it simple roots}) that have inner products prescribed by \eqref{ipd} (see 3.1 of \cite{TB:EL}). 
 }
\end{topic}
We want to study the action of $\Gamma = \PP \Aut(L)$ on $ Y = \PP_+(L^{\CC}) \simeq \CC H^{13}$.
Note that the simple roots of $L$ correspond to the vertices of the graph $D$.
A vertex of $D$ and the corresponding simple root of $L$ are denoted by
the same symbol. If $\lbrace r,s \rbrace$ is an edge of $D$, then the $\omega$-reflections
in the simple roots $r$ and $s$ braid. If $\lbrace r,s \rbrace$ is not an edge, then the reflections
commute. As mentioned in \ref{th-el}(a), the order three complex reflections in the $26$
simple roots generate $\Aut(L)$.
\par
The construction of $L$ given in \ref{defDtoL} implies that the group of {\it diagram
automorphisms} $Q \simeq 2. \op{PGL}(3, \FF_3)$ acts on the complex hyperbolic space $Y = \PP_+(L^{\CC})$.
The action of $\op{PGL}(3, \FF_3)$ on $L$ point-wise fixes a two dimensional primitive sub-lattice
$F$ of $L$ having a basis $w = ( w_{\cP}, w_{\cL} )$ where
\begin{equation}
w_{\cP}  = \bar{p} l + \sum_{ x' \in l} x'  \;\;\text{and} \;\;\;\;
w_{\cL}  = p x + \sum_{ x \in l'} l', 
\label{defwpwl}
\end{equation}
for any $x \in \cP$ and $l \in \cL$.
We shall make a detailed study of the lattice $F$ in section \ref{sec-fixed}.
From the inner products given in \eqref{ipd}, it is easy to check that for all $x \in \cP$ and $l \in \cL$,
we have,  
\begin{equation}
 \ip{w_{\cP}}{x} = \ip{w_{\cL}}{l} = 0, \text{\;} \ip{w_{\cP}}{l} = \ip{x}{w_{\cL}} = p, \text{\; and \;}
\bigl( \begin{smallmatrix} \ip{w_{\cP}}{w_{\cP}} & \ip{w_{\cP}}{w_{\cL}} \\ \ip{w_{\cL}}{w_{\cP}} & \ip{w_{\cL}}{w_{\cL}} \end{smallmatrix} \bigr)
 = \bigl( \begin{smallmatrix} 3 & 4p \\ 4 \bar{p} & 3 \end{smallmatrix} \bigr).
\label{eq-ipwpwl}
\end{equation}
Let $D^{\bot}$ be the set of mirrors perpendicular to the simple roots. These are called the
{\it simple mirrors}. Equation \eqref{eq-ipwpwl} implies that the thirteen simple mirrors in
$\CC H^{13}$ corresponding to the elements of $\cP$ (resp. $\cL$) meet at $[w_{\cP}]$ (resp. $[w_{\cL}]$).
There exists $\sigma \in \Aut(L)$ (cf. \cite{TB:EL}, Section 5.3) that corresponds to
interchanging points and lines of $\PP^2(\FF_3)$. 
The group $Q \subseteq \Gamma$ of diagram automorphisms is generated by $\sigma$ and
$\op{PGL}(3, \FF_3)$. Let 
\begin{equation}
\br = \tfrac{1}{26}\bigl(\sum_{x \in \cP} x + \xi \sum_{l \in \cL} l\bigr) 
= \frac{w_{\cP} + \xi w_{\cL}}{ 2(4 + \sqrt{3})}.
\label{eq-defrho}
\end{equation}
Then $[\br]$ is the midpoint of the geodesic joining $[w_{\cP}]$ and $[w_{\cL}]$.
The diagram automorphism $\sigma$ interchanges $[w_{\cP}]$ and $[w_{\cL}]$, so $[\br]$ is the only
point in $\CC H^{13}$ fixed by $Q$. Since $Q$ fixes $[\br]$ and acts transitively on the simple
mirrors, it follows that $[\br]$ is equidistant from the simple mirrors. Let
$d_0 = d(r^{\bot}, [\br])$ for $r \in D$.
As mentioned in theorem \ref{th-el}(c), we know that $d(r^{\bot}, [\br]) \geq d_0$
for all root $r$ of $L$ and $d(r^{\bot}, [\br]) = d_0$ if and only if $r^{\bot}$ is a simple
mirror. In other words, the simple mirrors are closest to $[\br]$ and all other mirrors are
further away.
\begin{lemma}
Consider $ \Gamma = \PP \Aut(L)$ acting on $Y \simeq \CC H^{13}$. 
Then the stabilizer (in $\Gamma$) of the set $D^{\bot}$ and the stabilizer
of $[\br]$ are both equal to $Q \simeq 2.\op{PGL}(3, \FF_3)$.
\end{lemma}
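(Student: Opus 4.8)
The plan is to establish the two equalities in turn: first $\op{Stab}_\Gamma(D^{\bot}) = Q$, and then deduce $\op{Stab}_\Gamma([\br]) = Q$ from it. Both stabilizers visibly contain $Q$, since the group of diagram automorphisms permutes $D^{\bot}$ and fixes $[\br]$; so in each case only the reverse inclusion requires work.

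For $\op{Stab}_\Gamma(D^{\bot})$ I would argue by a homomorphism-plus-counting argument. Any $g \in \op{Stab}_\Gamma(D^{\bot})$ permutes the $26$ (distinct) simple mirrors. Since the simple roots all have norm $-3$, each simple mirror determines its root up to a unit of $\cE$, so the quantity $\abs{\ip{r}{s}}$ is well defined on pairs of simple mirrors, and a representative $\tilde g \in \Aut(L)$ preserves it. Because $\abs{\ip{r}{s}} = \sqrt3$ exactly when the corresponding vertices of $D$ are incident and $0$ otherwise, $g$ induces an automorphism of the incidence graph $D$. This gives a homomorphism $\rho \colon \op{Stab}_\Gamma(D^{\bot}) \to \Aut(D)$.

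The key step is that $\rho$ is injective. If $\rho(g) = 1$ then $g$ fixes every simple mirror, so for each simple root $r$ we have $\tilde g(r) = u_r r$ with $u_r$ a unit of $\cE$. Preservation of the form forces $\bar{u}_r u_s = 1$ whenever $\ip{r}{s} \neq 0$, i.e.\ whenever $r,s$ are incident; since the incidence graph $D$ is connected, all the units $u_r$ coincide with a single unit $u$, whence $\tilde g = u\cdot\op{id}$ on the spanning set $\cP \cup \cL$ and therefore on all of $L$. Thus $g$ is trivial in $\Gamma = \PP\Aut(L)$. Now I count orders: by the fundamental theorem of projective geometry $\Aut(D) = \op{L}_3(3){:}2$ (collineations together with the point--line duality), so
\[
\abs{Q} \le \abs{\op{Stab}_\Gamma(D^{\bot})} = \bigl\lvert \rho(\op{Stab}_\Gamma(D^{\bot})) \bigr\rvert \le \abs{\Aut(D)} = 2\,\abs{\op{L}_3(3)} = \abs{2.\op{L}_3(3)} = \abs{Q},
\]
which forces $\op{Stab}_\Gamma(D^{\bot}) = Q$. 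For $\op{Stab}_\Gamma([\br])$ I would then show $\op{Stab}_\Gamma([\br]) \subseteq \op{Stab}_\Gamma(D^{\bot})$ and invoke the line above: if $g$ fixes $[\br]$ and $r$ is a simple root, then $g(r^{\bot})$ is again a mirror of $L$ with $d(g(r^{\bot}),[\br]) = d(r^{\bot},[\br]) = d_0$, and by theorem \ref{th-el}(c) the only mirrors at distance $d_0$ from $[\br]$ are the simple ones; hence $g(r^{\bot}) \in D^{\bot}$. So $\op{Stab}_\Gamma([\br]) \subseteq \op{Stab}_\Gamma(D^{\bot}) = Q \subseteq \op{Stab}_\Gamma([\br])$, giving equality.

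I expect the injectivity of $\rho$ to be the main point: it is exactly what rules out ``reflection-type'' elements that fix each simple mirror setwise without being scalar, and it is here that the connectivity of $D$ and the explicit inner products \eqref{ipd} do the work. The order bookkeeping and the distance argument of the last step are then routine given theorem \ref{th-el}(c).
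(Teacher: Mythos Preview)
Your proof is correct and uses the same two key ingredients as the paper: the injectivity of the natural map $\op{Stab}_\Gamma(D^{\bot})\to\Aut(D)$ (proved via connectivity of $D$ and the inner products \eqref{ipd}), and the distance characterization of simple mirrors from theorem \ref{th-el}(c). The only organizational difference is that the paper first shows $\op{Stab}_\Gamma(D^{\bot})=\op{Stab}_\Gamma([\br])$ directly (using that $[\br]$ is the unique point equidistant from all simple mirrors for one inclusion, and theorem \ref{th-el}(c) for the other) and then establishes injectivity, whereas you pin down $\op{Stab}_\Gamma(D^{\bot})=Q$ first by the order count $\abs{\Aut(D)}=\abs{Q}$ and only afterwards treat $[\br]$; your ordering has the minor advantage of not needing the ``unique equidistant point'' claim.
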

\begin{proof}
If $g \in \PP \Aut(L)$ fixes $D^{\bot}$ as a set, then it must fix $[\br]$,
it being the only point, equidistant from each simple mirror.
Conversely, if $g \in G$ fixes $[\br]$ then it must permute the mirrors in
$D^{\perp}$ in some way and thus determine an element of $2.\op{PGL}(3, \FF_3)$.
So the stabilizer of the set $D^{\perp}$ and the point $[\br]$
are equal.
\par
Suppose $g$ acts trivially on $D^{\bot}$.
Let $\tilde{g}$ be a lift of $g$ in $\Aut(L)$. 
For each $r \in D$, it follows that $\tilde{g} r = \mu_r r$ for some root of unity
$\mu_r$. If $r$ and $s$ are two simple roots with $\ip{r}{s}  = p$,
then $p = \ip{\tilde{g} r}{ \tilde{g} s} = \bar{\mu}_r \mu_s \ip{r}{s} = \bar{\mu}_r \mu_s p$.
So $\mu_r = \mu_s$ whenever there is an edge between $r$ and $s$ in the graph
$D$, which implies $\mu_r = \mu$ is a constant. So $g$ is equal to the identity.  
\end{proof}
In \cite{TB:EL}, we showed that $\Aut(L)$ is generated by the $16$ simple reflections
in the roots $\lbrace a, b_j, c_j, d_j, e_j, f_j \colon j = 1, 2, 3 \rbrace$ that form
the $M_{666}$ diagram. We shall end this section by noting a small improvement of this
fact.
%
%*******************************************************************************************************
%
\begin{lemma}
The automorphism group of $L$ is generated by the fourteen $\omega$-reflections in the simple roots
$a, f_1$ and $b_i,c_i,d_i, e_i$ for $i = 1, 2, 3$. 
Since $L$ is $14$ dimensional, the $14$ simple reflections of a $M_{6 5 5}$ diagram form a minimal set of
generators for $\Aut(L)$.  
\label{lemma-14}
\end{lemma}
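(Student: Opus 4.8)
The plan is to leverage the known generating set of sixteen $\omega$-reflections forming the $M_{666}$ diagram (Theorem \ref{th-el}(a)) and to show that the two reflections $\phi_{f_2}$ and $\phi_{f_3}$ are redundant, i.e.\ already lie in the subgroup $W$ generated by the remaining fourteen reflections of the $M_{655}$ diagram. Since $g\phi_r g^{-1}=\phi_{gr}$ for every $g\in\Aut(L)$ and every root $r$, and since $\phi_{ur}=\phi_r$ for a unit $u\in\cE^{\times}$, it suffices to prove that $f_2$ and $f_3$ lie, up to a unit, in the $W$-orbit of one of the fourteen retained roots. The minimality assertion is then immediate, and I would dispose of it first: the fourteen retained roots span the $14$-dimensional space $L^{\CC}$, so the common intersection of their fourteen mirrors is $0$; because $\Aut(L)=R(L)$ is generated by all reflections (whose mirrors, the roots spanning $L^{\CC}$, meet only in $0$), the common fixed space of any generating set of reflections is $0$, and since each $\omega$-reflection fixes only a hyperplane, at least $\dim_{\CC}L^{\CC}=14$ of them are needed.

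For the reduction itself I would first record the transport mechanism. If two roots $r,s$ braid, then $\phi_r(s)$ is again a root of $L$ lying in $\s_{\CC}\{r,s\}$, so a product of retained reflections ``walks'' a root along the edges present in $M_{655}$. Walking inside the tree $M_{655}$ alone cannot reach the deleted tips, because the only $M_{666}$-edge at $f_2$ (resp.\ $f_3$) is the one to $e_2$ (resp.\ $e_3$), which has been severed. The extra input must therefore come from the incidence geometry of $\PP^2(\FF_3)$: although the vertex $f_2$ is absent from $M_{655}$, the root $f_2$ still carries the inner products prescribed by \eqref{ipd} with the retained roots, and some of these are nonzero for retained roots lying in the other two hands. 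Such a nonzero inner product is a braid relation in $L$ that is invisible in the tree $M_{666}$ but valid in $R(L)$, and it furnishes a bridge from the retained roots into the $f_2$ and $f_3$ directions.

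Concretely, I would exhibit explicit words $g_2,g_3\in W$ and retained roots $r_2,r_3$ with $g_i r_i=u_i f_i$ for units $u_i\in\cE^{\times}$: transport the neighbour $e_1$ of $f_1$ down the first hand to the centre $a$, cross into the second (resp.\ third) hand via one of the hidden braidings coming from \eqref{ipd}, and arrive at $u_2 f_2$ (resp.\ $u_3 f_3$). Each such word then yields $\phi_{f_i}=g_i\,\phi_{r_i}\,g_i^{-1}\in W$, completing the proof that fourteen reflections generate. The main obstacle is exactly the verification that the $W$-orbit of the retained roots really reaches $f_2$ and $f_3$ — equivalently, the identification of the correct bridging inner products and conjugating words. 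I would settle this by computing the relevant inner products directly from \eqref{ipd} and Definition \ref{defDtoL}, and by confirming the conjugation $g_i\phi_{r_i}g_i^{-1}=\phi_{f_i}$ through a short matrix computation in the $14$-dimensional representation, in the same explicit spirit as the sixteen-generator result established in \cite{TB:EL}.
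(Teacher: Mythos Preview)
Your overall target---expressing $\phi_{f_2}$ and $\phi_{f_3}$ as $W$-conjugates of retained reflections---is exactly what is needed, and your minimality argument is sound. But your route to the conjugating words differs from the paper's, and as written it is only a promissory note.

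The paper does not search for hidden braidings among the fourteen retained roots. Instead it invokes the \emph{deflation relation} \eqref{eq-deflate}, already known to hold in $R(L)$ by Theorem~\ref{th-el}(b). Choosing the $12$-gon $(y_1,\dotsc,y_{12})=(f_2,e_2,d_2,c_2,b_2,a,b_1,c_1,d_1,e_1,f_1,a_3)$ in $D$, the paper verifies directly that $\phi_{y_1}\cdots\phi_{y_{10}}(f_1)=\omega^2 a_3$, so deflation holds for this cycle (hence for all $12$-gons, by diagram-automorphism transitivity). A short algebraic manipulation using deflation together with the braid, commutation and order-$3$ relations $y_i^{-1}=y_i^2$ then solves for $y_1=\phi_{f_2}$ purely in terms of $y_2,\dotsc,y_{11}$, all of which lie in the $M_{655}$ list; the non-retained vertex $a_3$ appears only as $y_{12}$ and drops out of the final word. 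The same argument handles $\phi_{f_3}$.

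Your approach bypasses deflation and asks for explicit $W$-transport words found by inspecting inner products. That is workable in principle, but the strategy you describe (``walk down the first hand, cross by a hidden braiding'') is too vague to be a proof: a hidden edge between two \emph{retained} roots produces a root in their span, not the non-retained root $f_2$; you need a composite of several reflections sending some retained root to a unit multiple of $f_2$, and locating such a word by inspection of \eqref{ipd} is less transparent than the paper's derivation. The paper's method buys a conceptual explanation of \emph{why} the word exists (it is the shadow of a $12$-gon in the incidence geometry) and requires no matrix computation; your method would presumably succeed after some experimentation but trades that structural insight for brute force.
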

The proof of the lemma uses the deflation relations, which we now recall. By a sub-graph of $D$
we mean the graph formed by taking a subset of vertices of $D$ and all the edges between these
vertices in $D$. A $12$-gon in $D$ is a sub-graph of $D$ that has the shape of a circuit of length
$12$ (in other words, an affine $A_{11}$ diagram),
Let $y$ be a $12$-gon in $D$ and  let $\lbrace y_1, \dotsb, y_{12} \rbrace$
be the successive vertices of $y$. Consider the relation
\begin{equation}
({y_1} y_2 \dotsb y_{10}) y_{11} ({y_1} y_2 \dotsb y_{10})^{-1} = y_{12} 
\label{eq-deflate}
\end{equation}
Following \cite{CCS:26}, we call this relation ``$\op{deflate}(y)$''.
The affine Coxeter group of type $A_{11}$ generated by $y_1,\dotsb, y_{12}$ reduces to the symmetric
group $S_{12}$ in presence of the relation $\op{deflate}(y)$.
The bimonster is the quotient of $\Cox(D,2)$ obtained by adding the relations
$\op{deflate}(y)$ for all free $12$-gon $y$ in $D$ (see \cite{CCS:26}).
%
%*******************************************************************************************************
%
\begin{proof}[Proof of lemma \ref{lemma-14}]
Take the $12$-gon $(y_1, \dotsb, y_{12}) = (f_2,e_2,d_2,c_2,b_2,a,b_1,c_1,d_1,e_1,f_1, a_3)$ in $D$.
(Here, only for this proof, we are using the names for the simple roots given in \cite{CCS:26}.) 
One can check that
\begin{equation*}
\phi_{f_2} \phi_{e_2} \phi_{ d_2} \phi_{ c_2} \phi_{ b_2} \phi_{ a} \phi_{ b_1} \phi_{ c_1} \phi_{ d_1} \phi_{ e_1}( f_1)
=\omega^2 a_3.
\end{equation*}
So equation \eqref{eq-deflate} holds for this $12$-gon. It is an amusing exercise to show that
the group $2.\op{PGL}(3, \FF_3)$ acts transitively on the set of marked $12$-gons
in $D$. So the relation \eqref{eq-deflate} holds for any $12$-gon in $D$. 
The deflation relation implies that
\begin{equation*}
y_{12} = (y_1 \dotsb y_{10}) y_{11} (y_1 \dotsb y_{10})^{-1} = (y_{11} \dotsb y_2) y_1 (y_{11} \dotsb y_2)^{-1}.
\end{equation*}
Moving the $y_1$'s to the other side of the equation, and using the commuting relations, we get
\begin{align*}
(y_2 \dotsb y_{10}) y_{11} (y_2 \dotsb y_{10})^{-1}
&= y_1 ^{-1} (y_{11} \dotsb y_2) y_1 (y_{11} \dotsb y_2)^{-1} y_1 \\
&= (y_{11} \dotsb y_3) y_1^{-1} y_2 y_1 y_2^{-1} y_1 (y_{11} \dotsb y_3)^{-1}.
\end{align*}
Using the braiding relations $y_1 y_2 y_1 = y_2  y_1 y_2 $ and $y_i^{-1} = y_i^2$ we get
\begin{equation*}
y_1^{-1} y_2 y_1 y_2^{-1} y_1 = y_1^{-1} y_2 y_1 y_2 y_2 y_1 = y_1^{-1} y_1 y_2 y_1 y_2 y_1 = y_2 y_1 y_2 y_1 =y_2^{-1} y_1 y_2.
\end{equation*}
It follows that
\begin{equation*}
(y_{11} \dotsb y_3) y_2^{-1} y_1 y_2 (y_{11} \dotsb y_3)^{-1}
=(y_2 \dotsb y_{10}) y_{11} (y_2 \dotsb y_{10})^{-1}.
\end{equation*}
So $y_1$ can be expressed in terms of $y_2, \dotsb, y_{11}$.
So the reflections $\phi_{f_2}$ and $\phi_{f_3}$ can be expressed in terms of
of the fourteen reflections stated in the lemma.
\end{proof}
%
%*******************************************************************************************************
%
%
%*******************************************************************************************************
%
\section{The fundamental group of the mirror complement-quotient}
%
%*******************************************************************************************************
%
%
%*******************************************************************************************************
%
\label{sec-pi1}
In this section we shall show that the Artin group $\cA(D)$ maps to the orbifold
fundamental group of $Y^{\circ}/\Gamma$. So there is an action of $\cA(D)$ on the
universal cover of $Y^{\circ}$ and an action of $\Cox(D,2)$ on a (possibly ramified)
cover of $Y^{\circ}/\Gamma$.
\begin{topic} {\bf Some basics on complex hyperbolic space for estimating distances: } 
\label{distbasic}
For this sub-section, we let $V$ be a general $(n+1)$ dimensional complex vector space with
a Hermitian form of signature $(1,n)$. At the end of the sub-section we shall go back to our example.
Let $V^+$ be the set of vectors of strictly positive norm in $V$ and $\PP_+(V) \simeq \CC H^n$
be the complex hyperbolic space of $V$.
Whenever we talk of distance in this section, it is with reference to the metric on $\CC H^n$.
Given $x, y$ in $\CC H^n$, let $\Geod(x,y)$ be the real geodesic segment joining $x$ and $y$. 
\par
In the following, let $x_1,  \dotsb, x_n, x, y, z, w$ be elements of $V^+$. Assume further that
$x_1, \dotsb, x_n$ lie in a totally real subspace, that is, $\langle x_i, x_j \rangle \in \RR_+$
for $1 \leq i, j \leq n$. Then, for all $i \neq j$ the Euclidean straight line segment in $V^+$
joining $x_i$ and $x_j$ (denoted by $\Conv(x_i, x_j)$) determines the real geodesic segment in
$\CC H^n$ joining $x_i$ and $x_j$, that is, $[\Conv(x_i, x_j)] = \Geod([x_i], [x_j])$.
Let $\Conv(x_1, \dotsb, x_n)$ be the convex hull of $x_1, \dotsb, x_n$ in $V^+$:
\begin{equation*}
\Conv(x_1, \dotsb, x_n) 
= \lbrace t_1 x_1 + \dotsb + t_n x_n \colon 0 \leq t_j \leq 1, \sum t_j = 1 \rbrace.
\end{equation*}
The set $\Conv(x_1, \dotsb, x_n)$ determines a totally real, totally geodesic subset of $\CC H^n$.
Given two non-empty subsets $Z$ and $W$ of $\CC H^n$ with $W$ compact, let 
\begin{equation*}
\md_Z(W) = \max \lbrace d(Z, w) \colon w \in W \rbrace.
\end{equation*}
The following observations will be useful for our computation. (Recall our assumption: $\ip{x_i}{x_j} \in \RR_+$).
\par
\begin{enumerate}
\item Let $\lbrace W_j: j \in J \rbrace$ be a (possibly infinite) collection of compact sets
such that $\cup_{j \in J} W_j$ is compact. Then
\begin{equation}
\md_H( \cup_{j \in J} W_j  ) = \sup \lbrace \md_H(W_j) \colon j \in J \rbrace
\label{mdunion} 
\end{equation}
\item
By general properties of negatively curved spaces, one knows that
$\max \lbrace d(z, w) \colon w \in \Conv(x_1, x_2) \rbrace$
 is attained when $w$ is either $x_1$ or $x_2$, in other words 
\begin{equation*}
\md_z( \Conv(x_1, x_2) ) = \max \lbrace d(z, x_1), d(z, x_2) \rbrace.
\end{equation*}
\item Let $ W = \Conv(x_2, \dotsb, x_n) $.
Using $\Conv(x_1, x_2, \dotsb, x_n) = \cup_{w \in W} \Conv(x_1,w)$, it follows that
\begin{equation*}
\md_z(\Conv(x_1, \dotsb, x_n)) 
= \sup \lbrace \md_z(\Conv(x_1, w)) \colon w \in W \rbrace 
= \max \lbrace d(z, x_1) , \md_z(W) \rbrace.
\end{equation*}
By induction on $n$, it follows that 
$\max \lbrace d(z, w) \colon w \in  \Conv(x_1, x_2, \dotsb, x_n) \rbrace$ is
attained when $w = x_i$ for some $i$. 
\item 
Let $H$ be a subset of $\CC H^n$ and $\Delta = \Conv(x_1, x_2, x_3)$. Let 
\begin{equation*}
\delta_1 = d(x_1, H) + \max \lbrace d(x_1, x_2), d(x_1, x_3) \rbrace.
\end{equation*}
Define $\delta_2, \delta_3$ similarly by cyclic permutation of $x_1, x_2, x_3$.
Choose $w_0 \in \Delta$ be such that $\md_H(\Delta) = d(w_0, H)$. Then, from the previous
remarks, we have,
\begin{equation*}
\md_H(\Delta) = d(w_0, H) \leq d(x_1, H) + d(x_1, w_0) \leq d(x_1, H) + \md_{x_1}(\Delta) \leq \delta_1.
\end{equation*}
It follows that 
\begin{equation}
\md_H(\Delta) \leq \min \lbrace \delta_1, \delta_2, \delta_3 \rbrace.
\label{mdHD}
\end{equation}
\item Let $x \in V^+$ and $H$ be a complex linear subspace in $V$ which meets
$V^+$. Then $H$ determines a totally geodesic subspace of $\CC H^n$. 
Let $\pr_H(x)$ be the ``projection" of $[x]$ on $H$, that is, the point on $[H]$
that is closest to $[x]$. 
\par
Let $r_1, \dotsb, r_k \in V$ be linearly independent vectors of negative norm and let
$H = r_1^{\bot} \cap \dotsb  \cap r_k^{\bot}$. Assume that $H \cap V^+ \neq \emptyset$, in other
words, the span of $\lbrace r_1, \dotsb, r_k \rbrace$ is negative definite. Then $[H]$ determines
a totally geodesic subspace of $\CC H^n$ and one has
\begin{equation*}
\pr_H(x) = (x + \CC r_1 + \dotsb + \CC r_k) \cap H.
\end{equation*}  
\end{enumerate}
\end{topic}
%
%*******************************************************************************************************
%
\begin{topic}{\bf The setup: }
Let $L$ be the lattice already encountered in the introduction and section \ref{sec-L}
(the direct sum of the Complex Leech lattice and a hyperbolic cell).
Let $Y = \PP_+(L^{\CC}) \simeq \CC H^{13}$ and $\cM$ be the union of the mirrors of the
reflection group of $L$. Let $Y^{\circ} = Y \setminus \cM$ be the complement of the mirrors
and let $X^{\circ} = Y^{\circ}/\Gamma$.
A continuous function $\gamma: [0,1] \to Y^{\circ}$ will be called a path in $Y^{\circ}$.
Let $\Pi_{Y^{\circ}} (a,b)$ be the set of homotopy class of paths in $Y^{\circ}$ beginning
at $a$ and ending at $b$. Given two paths $\gamma$ and $\gamma'$ in $Y^{\circ}$
with $\gamma(1) = \gamma'(0)$, let $\gamma * \gamma'$ be the path obtained
by first following $\gamma$ and then following $\gamma'$, at double the speed.
Let $\gamma_1$ and  $\gamma_2$ be two paths in $Y^{\circ}$ with same beginning and endpoints.
If $\gamma_1$ and $\gamma_2$ are homotopic in $Y^{\circ}$, then we write $\gamma_1 \sim \gamma_2$.
The homotopy class of a path $\gamma$ is denoted by $[\gamma]$.
\end{topic}
\begin{definition}
\label{def-G}
Let $G = \lbrace (\gamma, t) \colon t \in \Gamma, \gamma \in \Pi_{Y^{\circ}}(\br, t \br)\rbrace$.
Then $G$ becomes a group, with multiplication defined by
\begin{equation*}
 (\gamma, t).(\gamma', t') = (\gamma * t \gamma', t t').
\end{equation*}
Define $\pi^G_{\Gamma} : G \to \Gamma$ by $\pi^G_{\Gamma}(\gamma, t) = t$. The kernel of the epimorphism
$\pi^G_{\Gamma}$ is $\pi_1(Y^{\circ}, \br)$. So we have an exact sequence
\begin{equation*}
 1 \to \pi_1(Y^{\circ}) \to G \xrightarrow{\pi^{G}_{\Gamma}} \Gamma \to 1.
\end{equation*}
Let $\tilde{Y}^{\circ} = \cup_{ y \in Y^{\circ}} \Pi_{Y^{\circ}}(\br, y)$ be the universal
cover of $Y^{\circ}$. An element of $\tilde{Y}^{\circ}$ lying above $y \in Y^{\circ}$ is represented by a
path $\lambda: [0,1] \to Y^{\circ}$ such that $\lambda(1) = y$. The group $G$ acts on
$\tilde{Y}^{\circ}$ by
\begin{equation*}
 ([\gamma], t) [\lambda] = [ \gamma * t \lambda].
\end{equation*}
Let $\pi^{\tilde{Y}^{\circ}}_{X^{\circ}} : \tilde{Y}^{\circ} \to X^{\circ}$
be the projection given by $[\lambda] \mapsto \Gamma \lambda(1)$.
Then $\pi^{\tilde{Y}^{\circ}}_{X^{\circ}}(  g [\lambda] )
= \pi^{\tilde{Y}^{\circ}}_{X^{\circ}}([\lambda])$ for all $g \in G$,
that is, $G$ acts as deck transformations on the ramified covering
$\tilde{Y}^{\circ} \to X^{\circ}$.
For the purpose of this article, we define $G$ to be {\it orbifold fundamental group} of 
$X^{\circ}$.
\end{definition}
By the results mentioned in section \ref{sec-L}, there is an surjection
$\phi : \cA(D) \to \Gamma$, taking the generators of $\cA(D)$
to order three simple reflections in $\Gamma$. We want to prove the
following theorem.
\begin{theorem}
\label{th-lift}
There exists a homomorphism $\psi: \cA(D) \to G$ such that $\pi^G_{\Gamma} \circ \psi = \phi$.
\end{theorem}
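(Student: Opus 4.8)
The plan is to define $\psi$ on the generators of $\cA(D)$ and then verify that the two families of Artin relations survive in $G$. Since $\cA(D)$ is presented by the vertices of $D$ subject only to the commuting and braiding relations, a consistent assignment on generators extends to a homomorphism; and because $\pi^G_\Gamma$ is a homomorphism with $\pi^G_\Gamma(\psi(r)) = \phi_r = \phi(r)$ on each generator, the required identity $\pi^G_\Gamma\circ\psi = \phi$ will follow automatically once $\psi$ is known to be well defined.

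\textbf{The generators.} For a simple root $r$, recall that $\phi_r = \phi_r^{\omega}$ multiplies $r$ by $\omega$ and fixes $r^{\bot}$ pointwise. Writing $\br = \br_0 + c\,r$ with $\br_0 \in r^{\bot}$ and $c = \ip{r}{\br}/\ip{r}{r} \neq 0$ (nonzero since $[\br]$ lies at distance $d_0>0$ from $r^{\bot}$), I would take the arc
\[
\gamma_r(t) = \bigl[\, \br_0 + e^{2\pi i t/3}\, c\, r \,\bigr] = \bigl[\, \phi_r^{e^{2\pi i t/3}}(\br)\,\bigr], \qquad t\in[0,1],
\]
which runs from $[\br]$ to $[\phi_r\br]$ at constant distance $d_0$ from $r^{\bot}$, hence never meets $r^{\bot}$. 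Set $\psi(r) = ([\gamma_r],\phi_r)$. Note $[\br]\in Y^{\circ}$, being at positive distance from every mirror. The first point requiring proof is that $\gamma_r$ actually lands in $Y^{\circ}$, i.e. that it avoids the remaining mirrors; this is the first instance of the avoidance problem below.

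\textbf{The relations via $2$-cells.} For a non-edge $\{r,s\}$ we have $\ip{r}{s}=0$, so the partial reflections act on orthogonal lines and commute; the square
\[
C_{rs}(t,u) = \bigl[\, \phi_r^{e^{2\pi i t/3}}\,\phi_s^{e^{2\pi i u/3}}(\br)\,\bigr], \qquad (t,u)\in[0,1]^2,
\]
is a $2$-cell whose two boundary routes are exactly $\gamma_r * \phi_r\gamma_s$ and $\gamma_s * \phi_s\gamma_r$, so if $C_{rs}$ misses $\cM$ it yields $\psi(r)\psi(s)=\psi(s)\psi(r)$. For an edge $\{r,s\}$ we have $\ip{r}{s}=p$, and since $\op{gram}(\s(r,s))$ has determinant $9-\abs{p}^2=6>0$ with negative diagonal, $\s(r,s)$ is negative definite and $\phi_r,\phi_s$ generate a finite rank-two complex reflection group (the Shephard--Todd group of order $24$ generated by two braiding order-three reflections) acting on $\s(r,s)$ and trivially on $\s(r,s)^{\bot}$. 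The associated complex braid group has the single defining relation $aba=bab$, so inside the rank-two mirror complement there is a standard $2$-cell bounded by $\gamma_r * \phi_r\gamma_s * \phi_r\phi_s\gamma_r$ and $\gamma_s * \phi_s\gamma_r * \phi_s\phi_r\gamma_s$, giving $\psi(r)\psi(s)\psi(r)=\psi(s)\psi(r)\psi(s)$ once this cell is shown to miss the other mirrors. In both cases the cell lies in the totally geodesic $\CC H^2$ spanned by $\s(r,s)$ together with the component of $\br$ orthogonal to it, so all estimates take place in low dimension.

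\textbf{The main obstacle.} Everything reduces to showing these finitely many $2$-cells avoid $\cM$, and this is where the real work lies. The key input is theorem \ref{th-el}(c): $[\br]$ sits at distance $d_0$ from the simple mirrors and strictly farther from every other mirror. Each cell is bounded, being obtained by rotating $\br$ through angles at most $2\pi/3$ about $r^{\bot}$ and $s^{\bot}$, so the maximal displacement of its points from $[\br]$ is controlled by $d_0$. Applying the convexity and diameter estimates of \ref{distbasic} --- in particular \eqref{mdunion} and \eqref{mdHD} --- bounds $\md_{[\br]}$ of each cell, and comparing this bound against the gap between $d_0$ and the next largest mirror distance confines the mirrors that could possibly meet a cell to an explicit finite list; the proof is then completed by checking directly that none of these meets the relevant cell. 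Finally, since $Q\simeq 2.\op{L}_3(3)$ acts transitively on the simple roots, on the incident (edge) pairs, and on the non-incident pairs, the whole verification collapses by symmetry to one representative generator path, one representative braid relation, and one representative commuting relation, which keeps the finite check manageable.
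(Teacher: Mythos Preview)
Your overall strategy is sound and is essentially the paper's: define $\psi$ on generators by explicit arcs from $[\br]$ to $[\phi_r\br]$, verify the Artin relations by exhibiting $2$-cells bounded by the two sides of each relation, and use theorem \ref{th-el}(c) plus the estimates of \ref{distbasic} to reduce mirror-avoidance to a finite check, collapsed by the $Q$-action. The paper carries out exactly this plan (Theorem \ref{pi1rel}, Lemmas \ref{lemma-def}, \ref{meetdeltabraid}, \ref{meetdeltacommute}, \ref{meetdelta}, \ref{lemma-perturb}).

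Where you diverge is in the execution of the avoidance step, and here your proposal is too optimistic on several points:
\begin{itemize}
\item The ``gap between $d_0$ and the next largest mirror distance'' does not do the work. What the bound $\md_{[\br]}(\text{cell})$ actually yields is a bound on the height $\op{ht}(r)=\abs{\ip{\br}{r}}/\nr$ of any root whose mirror meets the cell (equation \eqref{htbound}); in the paper this bound is about $2.18$, well beyond the next mirror shell at height $\abs{1+\xi}$. To cut the candidate list down, the paper uses \emph{further} distance estimates from the cell to each simple mirror $\rho_i^\bot$ and to $[w_{\cP}]$, which force $\abs{\ip{\rho_i}{r}}^2,\abs{\ip{w_{\cP}}{r}}^2\in\{0,3,9\}$ and produce the explicit root-type table (Table \ref{table-1}). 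Even after this, $130$ mirrors remain and are eliminated by a computer check. Your single estimate from $[\br]$ would leave far more.
\item Your $2$-cells are larger than they need to be. The paper does not work with a product cell or an abstract ``standard'' braid cell; it cones the boundary curve from the point $q=\pr_{r_1^\bot\cap r_2^\bot}(\br)$ and then uses the dihedral symmetry of $\langle\phi_1,\phi_2\rangle$ to reduce the whole cell to images of one small triangle $\Conv(\br,p_1,q)$ (Lemma \ref{meetdelta}). This is what keeps the height bound at $2.18$. Your cells, containing points such as $\phi_1\phi_2\phi_1(\br)$, give a worse bound.
\item $Q$ is \emph{not} transitive on non-incident pairs: there are three orbits on distinct ordered pairs of simple roots (incident point--line; non-incident point--line; point--point), so two representative commuting relations must be checked, not one.
\item The cell one first writes down inevitably \emph{does} meet the mirrors $r_1^\bot,r_2^\bot$ (and in the braid case also $\phi_{r_1}(r_2)^\bot,\phi_{r_2}(r_1)^\bot$), so a small perturbation (Lemma \ref{lemma-perturb}) is required to push it into $Y^\circ$; this step is absent from your sketch.
\end{itemize}
In summary, the skeleton is correct and coincides with the paper's, but the actual avoidance argument is considerably more delicate than you indicate and ultimately relies on computer verification.
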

To give a flavor of the argument, we first prove an easy model lemma.
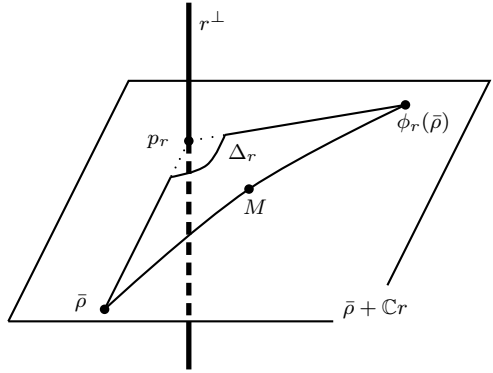
\begin{figure}
\[
\begin{pspicture}(-5,-3)(5,3)
\psset{unit=.8cm}
\psline(-4,-2)(1.4,-2)

\psline(2.3,-1.4)(4,2)(-2,2)(-4,-2) \rput(2.1, -1.8){\tiny $\br + \CC r$}
\rput(-1,1){
\psline[linewidth=.07cm](0,0)(0,2.3) \psline[linewidth=.07cm, linestyle=dashed](0,0)(0,-2.9) \psline[linewidth=.07cm](0,-3)(0,-3.8)
\pscurve(-1.4,-2.8)(1,-.8)(3.6,.6)
\psline(-1.4,-2.8)(-.3,-.6) \psline[linestyle=dotted](-.3,-.6)(0,0) 
\psline(3.6,.6)(.6,.1) \psline[linestyle=dotted](.6,.1)(0,0)
\pscurve(-.3,-.6)(.3,-.4)(.6,.1)
\psdot(0,0) \rput(-.5,0){\tiny $p_r$}
\psdot(-1.4,-2.8) \rput(-1.8,-2.7){\tiny $\br$}
\psdot(3.6,.6) \rput(3.9,.3){\tiny $\phi_r(\br)$}
\rput(.4,2){\tiny $r^{\bot}$}
\psdot(1,-.8) \rput(1.1,-1.1){\tiny $M$} 
\rput(.9,-.2){\tiny $\Delta_r$}
}
\end{pspicture}
\]
\caption{a totally geodesic triangle $\Delta_r$ such that $\Delta_r \setminus \lbrace p_r \rbrace$ does not
intersect any mirror.}
\label{fig-1}
\end{figure}
%
%
%*******************************************************************************************************
%
\begin{lemma}
Let $r$ be a simple root of $L$.
Let $p_r$ be the projection of $\br$ on $r^{\bot}$.
Let $\Delta_r$ be the closed totally geodesic triangle in $Y$ with vertices at
$\br$, $p_r = \op{pr}_{r^{\bot}}(\br)$ and $\phi_r(\br)$
(see figure \ref{fig-1}).
Then $\Delta_r \setminus \lbrace p_r \rbrace$ does not intersect
any mirror. 
\par
(b) Let $\gamma$ and $\gamma'$ be any two paths lying in $\Delta_r \setminus \lbrace p_r \rbrace$,
starting at $\br$ and ending at $\phi_x(\br)$. Then $\gamma \sim \gamma'$ in $Y^{\circ}$.
\label{lemma-def}
\end{lemma}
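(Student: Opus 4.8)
The plan is to show that the triangle $\Delta_r$ is so small---its vertices all lie within distance $2 d_0$ of the fixed point $[\br]$---that the only mirror that can come near it is the simple mirror $r^{\bot}$ itself, and that $r^{\bot}$ touches $\Delta_r$ only at the projection point $p_r$.

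First I would set up the geometry of $\Delta_r$. The three vertices are $[\br]$, $p_r = \op{pr}_{r^{\bot}}([\br])$, and $\phi_r([\br])$. Since $\phi_r$ is an isometry fixing $r^{\bot}$ pointwise, we have $d([\br], r^{\bot}) = d(\phi_r([\br]), r^{\bot}) = d_0$, and $p_r$ is the common foot of the perpendicular from both $[\br]$ and $\phi_r([\br])$ (here I use $\op{pr}_H$ from \ref{distbasic}(5), with $H = r^{\bot}$). Thus $d([\br], p_r) = d(\phi_r([\br]), p_r) = d_0$, and by the triangle inequality every point of $\Delta_r$ lies within distance $2 d_0$ of $[\br]$. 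The key input is theorem \ref{th-el}(c): for \emph{every} root $s$ of $L$ one has $d(s^{\bot}, [\br]) \geq d_0$, with equality exactly for the simple mirrors.

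Next I would bound which mirrors can possibly meet $\Delta_r$. Using the distance estimates of \ref{distbasic}, and in particular the fact \eqref{mdHD} that $\md_H(\Delta) \leq \min\{\delta_1, \delta_2, \delta_3\}$ where each $\delta_i$ combines the distance from a vertex to $H$ with an edge length, one controls $\md_{s^{\bot}}(\Delta_r)$ from below for a mirror $s^{\bot}$ in terms of $d(s^{\bot}, [\br])$ and the edge lengths $d_0$. A mirror $s^{\bot}$ meeting $\Delta_r$ forces $d(s^{\bot}, [\br]) \leq 2 d_0$, so after computing $d_0$ explicitly from $\op{gram}(F)_w = \bigl( \begin{smallmatrix} 3 & 4p \\ 4 \bar{p} & 3 \end{smallmatrix} \bigr)$ and the norms $\ip{w_{\cP}}{w_{\cP}} = \ip{w_{\cL}}{w_{\cL}} = 3$, one reduces to mirrors lying in a ball of radius $2 d_0$ about $[\br]$. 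The arithmetic of $L$ then makes this a finite set of roots $s$ (up to unit multiples), which I would enumerate and check directly, in the spirit of the final step described in the introduction, confirming that none of the non-simple mirrors in this ball actually intersects $\Delta_r$.

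I then deal with the simple mirror $r^{\bot}$ itself: it meets $\Delta_r$ only at $p_r$, since the edge $\op{Geod}([\br], p_r)$ meets $r^{\bot}$ only at its endpoint $p_r$ (being a minimizing perpendicular), the edge $\op{Geod}(p_r, \phi_r([\br]))$ likewise meets $r^{\bot}$ only at $p_r$, and the third edge $\op{Geod}([\br], \phi_r([\br]))$ stays at distance $\geq$ some positive amount from $r^{\bot}$ because both its endpoints are at distance $d_0 > 0$ from $r^{\bot}$ and geodesics in a $\mathrm{CAT}(0)$ space convex-combine distances to a totally geodesic subspace. This proves part (a), that $\Delta_r \setminus \{p_r\}$ misses every mirror. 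Part (b) is then immediate: $\Delta_r \setminus \{p_r\}$ is a triangle with one vertex deleted, hence simply connected, so any two paths in it with common endpoints are homotopic there, and a fortiori in $Y^{\circ}$, by the inclusion $\Delta_r \setminus \{p_r\} \subseteq Y^{\circ}$. \emph{The main obstacle} I anticipate is the finite mirror enumeration: computing $d_0$ precisely and then certifying that the finitely many roots $s$ with $d(s^{\bot}, [\br]) \leq 2 d_0$ genuinely avoid the triangle is where a concealed computation lurks, and getting a clean distance bound sharp enough to keep that list short will require care with the Hermitian form on $L$.
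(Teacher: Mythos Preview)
Your approach would work in principle, but it takes a detour that the paper avoids. You bound $\Delta_r$ inside the ball of radius $2d_0$ about $[\br]$ and then face a finite enumeration of mirrors---exactly the ``main obstacle'' you flag at the end. The paper's argument sidesteps that enumeration entirely.

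The paper's trick is to bisect $\Delta_r$ by the midpoint $M$ of $\Geod([\br], \phi_r([\br]))$, obtaining two congruent triangles $\Delta_r^1 = \Conv(\br, p_r, M)$ and $\Delta_r^2 = \Conv(\phi_r(\br), p_r, M)$. One checks directly that $d(\br, M) < d(\br, p_r) = d_0$. By the vertex-maximum principle from \ref{distbasic}(3), the farthest point of $\Delta_r^1$ from $[\br]$ is therefore $p_r$, at distance exactly $d_0$. But theorem \ref{th-el}(c) says \emph{every} mirror lies at distance $\geq d_0$ from $[\br]$, so no mirror can meet $\Delta_r^1 \setminus \{p_r\}$. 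By symmetry (replace $[\br]$ by $\phi_r([\br])$, which is also a $\Gamma$-translate of $[\br]$ and hence enjoys the same minimal-distance property) the same holds for $\Delta_r^2$. No list of mirrors needs to be compiled or checked.

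So your radius bound of $2d_0$ is too generous: the bisection gives radius $d_0$ on each half, which matches the constant in \ref{th-el}(c) on the nose and closes the argument immediately. Your enumeration strategy is essentially the machinery the paper later deploys for the harder triangle $\Conv(\br, p_1, q)$ in lemma \ref{meetdelta}, where the triangle genuinely protrudes past radius $d_0$ and a finite check is unavoidable; for the present ``model lemma'' it is overkill.
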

\begin{proof}
The isosceles triangle $\Delta_r$ is totally geodesic, since it lies in the
complex geodesic $\PP_+(\CC r + \CC \br)$, containing $\br$ and $r$. 
Let $M$ be the midpoint of $\br$ and $\phi_r(\br)$. Let $\Delta_r^1$ be the 
triangle with vertices $\br, p_r, M$ and $ \Delta_r^2$ be the triangle with
vertices $\phi_r(\br), p_r, M$. So $\Delta_r = \Delta_r^1 \cup \Delta_r^2$.
One checks that $d(\br, p_r) > d(\br, M)$. So the point of $\Delta_r^1$
that is furthest from $\br$ is $p_r$. But $d(\br, p_r)$ is
the minimum distance between $\br$ and any mirror (by \ref{th-el}(c) ).
Hence no mirror can intersect $\Delta_r^1 \setminus \lbrace p_r \rbrace$.
The same statement holds for $\Delta_r^2$ by symmetry.
This proves part (a). Part (b) follows from part (a). 
\end{proof}
%
%*******************************************************************************************************
%
\begin{definition}
Maintain the notations of lemma \ref{lemma-def}.
Let $r$ be a simple root. Let $[\gamma]_r$ be the unique homotopy class of paths lying
in $\Delta \setminus \lbrace p_r \rbrace$ starting at $\br$ and ending at $\phi_x(\br)$.
Define $g_r \in G$, called the
{\it braid reflection} in $r$, by $g_r = ([\gamma]_r, \phi_r^{\omega})$.
If $r_1, r_2, \dotsb, r_{26}$ are the simple roots, then we write
$\phi_i = \phi_{r_i}^{\omega}$, $[\gamma]_i = [\gamma]_{r_i}$ and $g_i = g_{r_i}$.
\end{definition}
Now, theorem \ref{th-lift} follows from the following result.
\begin{theorem}
Let $r_1$ and $r_2$ be two simple roots of $L$. 
If the $\omega$-reflections $\phi_1$ and $\phi_2$ braid (resp. commute) in $\Gamma$,
then the braid reflections $g_1$ and $g_2$ braid (resp. commute) in $G$. So $\psi(r_i) = g_i$
defines a homomorphism $\psi: \cA(D) \to G$ such that $\pi^G_{\Gamma} \circ \psi = \phi$.
\label{pi1rel}
\end{theorem}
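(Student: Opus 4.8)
The plan is to reduce the theorem to an explicit homotopy construction in $Y^\circ$, mirroring the distance estimates of \ref{distbasic} and the mirror-configuration input of \ref{th-el}(c). The statement asks that the braid reflections $g_1, g_2 \in G$ satisfy the same relation (braid or commute) as the $\omega$--reflections $\phi_1, \phi_2 \in \Gamma$. Since each $g_i = ([\gamma]_i, \phi_i)$ projects under $\pi^G_\Gamma$ to $\phi_i$, and the relations already hold in $\Gamma$, the $\Gamma$--coordinate of each braid/commutation relation automatically matches. Thus the content is purely in the $\pi_1(Y^\circ)$--coordinate: I must show that the two composite paths obtained from the two sides of each relation (e.g. $g_1 g_2 g_1$ versus $g_2 g_1 g_2$) are homotopic in $Y^\circ$. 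Concretely, multiplying out $g_1 g_2 g_1 = ([\gamma]_1 * \phi_1[\gamma]_2 * \phi_1\phi_2[\gamma]_1,\, \phi_1\phi_2\phi_1)$ and similarly for the other side, the task is to prove these two concatenated paths from $\br$ to the common endpoint $(\phi_1\phi_2\phi_1)\br$ are homotopic, and analogously that the two length-two concatenations agree in the commuting case.

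The key steps, in order, are as follows. First I would treat the commuting case: here $\phi_1, \phi_2$ commute, the roots $r_1, r_2$ are orthogonal (not joined by an edge of $D$), so the reflections act on complementary parts of $L^\CC$; I would build a ``square'' $2$--cell in $Y$ whose boundary is $\gamma_1 * \phi_1\gamma_2$ versus $\gamma_2 * \phi_2\gamma_1$ by taking the product of the two triangles $\Delta_{r_1}, \Delta_{r_2}$ inside the totally geodesic subspace $\PP_+(\CC r_1 + \CC r_2 + \CC\br)$, and then show this cell avoids $\cM$. Second, for the braiding case, I would set up the analogous $2$--cell spanning the hexagonal boundary traced out by the two sides $g_1 g_2 g_1$ and $g_2 g_1 g_2$, again living in the totally geodesic subspace spanned by $\br, r_1, r_2$, which by the braiding relation is preserved by the group $\langle \phi_1, \phi_2\rangle$. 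Third, in each case the crux is verifying the cell misses every mirror: using \eqref{mdHD} and the distance function $\op{ht}$, together with \ref{th-el}(c) (that $d_0 = d(r^\bot,\br)$ is the minimal mirror-distance and is attained exactly on simple mirrors), I would bound $\md_\cM$ of the cell to rule out all but finitely many mirrors, then check the finite list directly as described in the introduction.

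The main obstacle I expect is precisely this mirror-avoidance verification for the braiding case. Unlike the model Lemma \ref{lemma-def}, where a single isosceles triangle sits inside one complex geodesic $\PP_+(\CC r + \CC\br)$ and the furthest point from $\br$ is simply $p_r$, the braid cell is genuinely two--dimensional over the relevant rank--three subspace and its boundary passes through several images $\phi_i\br$, $\phi_i\phi_j\br$, each of which may approach mirrors other than $r_1^\bot, r_2^\bot$. The delicate point is that while the vertices $\br, \phi_i\br, \dots$ all lie at the minimal distance $d_0$ from the simple mirrors, an interior point of the $2$--cell could in principle drift closer to some non-simple mirror; controlling this requires the convexity/$\md$ estimates of \ref{distbasic}(2)--(4) to confine the cell to a metric neighborhood of $\br$ of radius governed by $d_0$ and the edge lengths $d(\br,\phi_i\br)$, after which the height function and the discreteness of the root system leave only finitely many candidate mirrors to eliminate by hand. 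I would therefore invest the bulk of the argument in making the geodesic triangle decomposition of the braid cell explicit and pushing the inequality $\md_\cM(\text{cell}) $ below the threshold forcing any intersecting mirror to be simple, reducing the problem to the finite check.
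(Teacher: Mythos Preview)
Your high-level strategy matches the paper's: reduce to a homotopy of paths in $Y^{\circ}$, build a $2$--cell with the two composite paths as boundary, use the metric estimates of \ref{distbasic} together with \ref{th-el}(c) to cut the list of potentially-intersecting mirrors down to a finite set, and then dispose of that finite set by direct computation. You also correctly observe that everything lives in the $\CC H^2$ determined by $\s\lbrace \br, r_1, r_2\rbrace$, which is preserved by $\langle \phi_1,\phi_2\rangle$.

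Two specific technical choices in the paper are missing from your outline, and without them the estimates would be hard to close.

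First, the paper does \emph{not} fill the hexagon (or square) by an arbitrary $2$--cell in that $\CC H^2$; it cones from the specific point $q = \pr_{r_1^{\bot}\cap r_2^{\bot}}(\br)$. Since $q$ is fixed by the dihedral group $\langle\phi_1,\phi_2\rangle$, the resulting cell decomposes into $6$ (braiding) or $4$ (commuting) isometric copies of the quadrilateral $T=\Conv(\br,p_1,q)\cup\Conv(\br,p_2,q)$, and a further diagram-automorphism symmetry interchanging $r_1,r_2$ reduces everything to the single triangle $\Conv(\br,p_1,q)$. This symmetry reduction is what makes the finite check tractable; your proposal to bound $\md_{\cM}$ of the whole cell directly from the vertices $\br,\phi_i\br,\dotsc$ would give substantially weaker estimates, because those vertices are already at distance $\geq 2d_0$ from $\br$ and the corresponding height bound lets through many more roots than the paper's bound \eqref{htbound}. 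The paper instead measures distances from the triangle to \emph{each} simple mirror $\rho_i^{\bot}$ and to $w_{\cP}$, obtaining constraints $\abs{\ip{x_i}{r}}^2,\abs{\ip{w_{\cP}}{r}}^2\in\lbrace 0,3,9\rbrace$ that pin down the root $r$ up to a short explicit list.

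Second, you write that you would ``show this cell avoids $\cM$'', but in fact no natural filling can: the cone $C=\Conv(q,\sigma_1\cup\sigma_2)$ necessarily meets $r_1^{\bot}$, $r_2^{\bot}$, and (in the braiding case) also $\phi_{r_1}(r_2)^{\bot}$ and $\phi_{r_2}(r_1)^{\bot}$, since $q$ itself lies on all of these and the edges $\Conv(p_i,q)$ lie in $r_i^{\bot}$. The paper handles this by a perturbation: replace $q$ by $q'=q+\epsilon\br$ and the broken geodesic $\sigma_i$ by a nearby $\sigma_i'$ that detours around the midpoints $z_1,z_3,z_5$ (which lie on mirrors), and then check by hand, via a table of inner products, that the perturbed cell $C'$ misses the four problematic mirrors. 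Your outline should anticipate this two-step structure: first prove the unperturbed cell meets only those specific mirrors, then perturb.
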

%
%*******************************************************************************************************
%
The rest of this section is devoted to proving theorem \ref{pi1rel}. 
\begin{proof}[Sketch of proof:] 
We need to set up some notations to make our way smooth.
\par
Let $r_1, \dotsb, r_{26}$ be the simple roots.
Define the vectors $D_{\rho} = \lbrace \rho_1, \dotsb, \rho_{26} \rbrace$ by 
\begin{equation*}
\rho_j = \begin{cases} 
          r_j & \text{\;if \;} r_j \in \cP, \\
	\xi r_j & \text{\; if \;} r_j \in \cL.
         \end{cases}
\end{equation*}
Sometimes it is more convenient to use the vectors $\rho_j$ instead of $r_j$.
Recall that for all $i$ and $j$, $\ip{\rho_i}{\rho_j}$ is a non-negative real number, 
$\br = \sum_{i = 1}^{26} \rho_i/26$ and $\ip{\rho_i}{\br} = \abs{\br}^2$.
\par
Let $r_1$ and $r_2$ be any two distinct simple roots and
$\rho_1$ and $\rho_2$ be the corresponding elements of $D_{\rho}$. Let
\begin{equation}
p_i = \br + \frac{\nr}{3} \rho_i \text{\;\; and \;\;} q = \br + \frac{\nr}{\alpha}(\rho_1 + \rho_2), 
\text{\;\; where\;\;} 
\alpha = 
\begin{cases} 3 & \text{if $\ip{\rho_1}{\rho_2} = 0$}\\
3 - \sqrt{3} & \text{if $\ip{\rho_1}{\rho_2} = \sqrt{3}$} \end{cases}
\label{eq-defalpha}
\end{equation}
Then $[p_1]$, $[p_2]$ and $[q]$ are the projections of
$\br$ on $\rho_1^{\bot}$, $\rho_2^{\bot}$ and $\rho_1^{\bot} \cap \rho_2^{\bot}$  respectively.
\par
Suppose the reflections in $r_1$ and $r_2$ braid. Let $\phi_1 = \phi_{r_1}^{\omega}$ and
$\phi_2 = \phi_{r_2}^{\omega}$.
We define vectors $z_0, \dotsb, z_6$ as follows (see figure \ref{fig-2}):
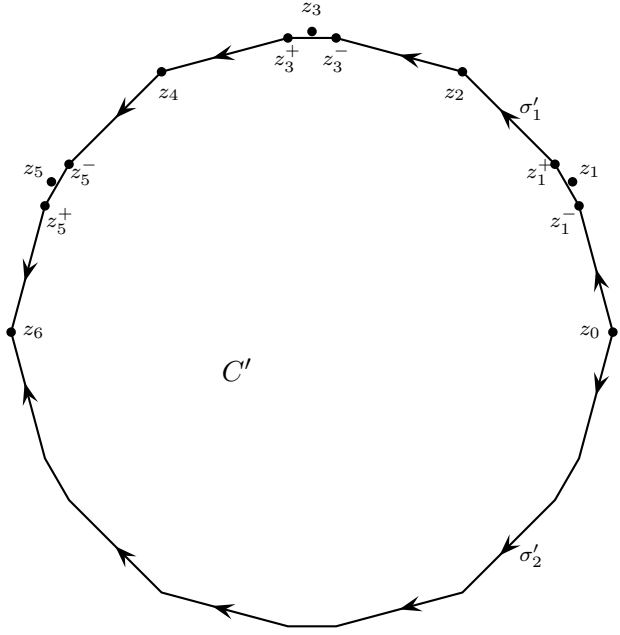
\begin{figure}
\[
\begin{pspicture}(-5,-4.2)(5,4.2)
\psdot(4,0) \psdot(3.550, 1.68) \psdot(3.464,2) \psdot(3.230,2.234)
\psdot(2,3.464)  \psdot(.32,3.914) \psdot(0,4) \psdot(-.32,3.914)
\psdot(-2,3.464) \psdot(-3.230,2.234) \psdot(-3.464,2)  \psdot(-3.550,1.68)
\psdot(-4,0)
\psline(4,0)(3.550, 1.68)(3.230,2.234)(2,3.464)(.32,3.914)(-.32,3.914)(-2,3.464)(-3.230,2.234)(-3.550,1.68)(-4,0)
\psline(4,0)(3.550, -1.68)(3.230,-2.234)(2,-3.464)(.32,-3.914)(-.32,-3.914)(-2,-3.464)(-3.230,-2.234)(-3.550,-1.68)(-4,0)
\psline[arrowsize=6pt]{->}(3.820,.672)(3.775,.84) \psline[arrowsize=6pt]{->}(-3.775,.84)(-3.820,.672) 
\psline[arrowsize=6pt]{->}(2.615,2.849)(2.492,2.972) \psline[arrowsize=6pt]{->}(-2.492,2.972)(-2.615,2.849)
\psline[arrowsize=6pt]{->}(1.328,3.644)(1.16,3.689) \psline[arrowsize=6pt]{->}(-1.16,3.689)(-1.328,3.644) 
\psline[arrowsize=6pt]{->}(3.820,-.672)(3.775,-.84) \psline[arrowsize=6pt]{->}(-3.775,-.84)(-3.820,-.672) 
\psline[arrowsize=6pt]{->}(2.615,-2.849)(2.492,-2.972) \psline[arrowsize=6pt]{->}(-2.492,-2.972)(-2.615,-2.849)
\psline[arrowsize=6pt]{->}(1.328,-3.644)(1.16,-3.689) \psline[arrowsize=6pt]{->}(-1.16,-3.689)(-1.328,-3.644) 
\rput(3.7,0){\tiny $z_0$} \rput(3.350, 1.48){\tiny $z_1^{-}$} \rput(3.694,2.15){\tiny $z_1$} \rput(3.030,2.134){\tiny $z_1^+$}
\rput(1.9,3.164){\tiny $z_2$}  \rput(.325,3.644){\tiny $z_3^-$} \rput(0,4.3){\tiny $z_3$} \rput(-.325,3.644){\tiny $z_3^+$}
\rput(-1.9,3.164){\tiny $z_4$} \rput(-3.030,2.134){\tiny $z_5^-$} \rput(-3.694,2.15){\tiny $z_5$} \rput(-3.350,1.48){\tiny $z_5^+$}
\rput(-3.7,0){\tiny $z_6$}
\rput(2.915,2.979){\tiny $\sigma_1'$} 
\rput(2.915,-2.979){\tiny $\sigma_2'$} 
\rput(-1,-.5){\small $C'$} 
\end{pspicture}
\]
\caption{Schematic picture of the $2$-cell $C'$ and the boundary curves $\sigma_1'$ and $\sigma_2'$.
The points $z_k$ and $z_{2 j - 1}^{\pm}$ on $\sigma_1'$ are marked.}
\label{fig-2}
\end{figure}
\begin{equation*}
z_0 = \br, \text{\;\;} z_1 = p_1, \text{\;\;} z_2 = \phi_1(\br), \text{\;\;} z_3 = \phi_1(p_2), \text{\;\;} 
z_4 = \phi_1 \phi_2 (\br), \text{\;\;} z_5 = \phi_1 \phi_2 (p_1), \text{\;\;} z_6 = \phi_2 \phi_1 \phi_2 (\br). 
\end{equation*}
Note that $\ip{z_{j-1}}{z_j} \in \RR_+$, for $j = 1, \dotsb, 6$. So the
geodesic segment joining $z_{j-1}$ and $z_j$ in $Y$ follows the curve
$\Conv(z_{j-1}, z_j)$. We shall use complex hyperbolic geometry to prove the 
following lemma.
\begin{lemma}
Let $\sigma_1 = \cup_{j = 1}^6 \Conv(z_{j-1},z_j)$. Define $\sigma_2$ similarly by
interchanging the roles of $r_1$ and $r_2$.
Then the $2$-cell $C = \Conv(q, \sigma_1 \cup \sigma_2) \subseteq V_+$ only intersects the
four mirrors $r_1^{\bot}$, $r_2^{\bot}$, $\phi_{r_1}(r_2)^{\bot}$ and
$\phi_{r_2}(r_1)^{\bot}$.
\label{meetdeltabraid}
\end{lemma}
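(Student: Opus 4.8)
\medskip

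The plan is to exploit the two-step structure of the $2$-cell. Recall $C = \Conv(q, \sigma_1 \cup \sigma_2)$ is the geodesic cone from the apex $[q]$ over the boundary path $\sigma_1 \cup \sigma_2$. Since $[q]$ is the common projection of $[\br]$ onto $\rho_1^\bot \cap \rho_2^\bot$, the whole cell lives inside the totally geodesic subspace determined by $\CC\rho_1 + \CC\rho_2 + \CC\br$, which is $3$-(complex-)dimensional; so morally this is a calculation in $\CC H^2$, and the four candidate mirrors $r_1^\bot, r_2^\bot, \phi_{r_1}(r_2)^\bot, \phi_{r_2}(r_1)^\bot$ are precisely the mirrors one meets by reflecting $\rho_1,\rho_2$ through each other. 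First I would reduce the problem to bounding $\md_{[\br]}(C)$, the maximal distance from $[\br]$ to points of $C$: by Theorem \ref{th-el}(c), every mirror other than a simple mirror is strictly further from $[\br]$ than $d_0 = d([\br], r^\bot)$, and the simple mirrors closest to $[\br]$ that can possibly meet the cell are exactly the four listed. So if I can show that any point of $C$ lying on a non-listed mirror would have to be too close to $[\br]$, or conversely that $C$ stays inside a ball around $[\br]$ small enough to exclude all but finitely many mirrors, the problem collapses to a finite check.

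\medskip

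The main reduction uses the estimates assembled in \ref{distbasic}. Since the $z_k$ satisfy $\ip{z_{k-1}}{z_k} \in \RR_+$, each segment $\Conv(z_{k-1},z_k)$ is a genuine geodesic, and by remark (3) the distance $\md_{[\br]}$ of any such segment is attained at an endpoint $z_k$. The cell $C$ is a union of geodesic triangles $\Conv(q, z_{k-1}, z_k)$ (and the analogous ones from $\sigma_2$), so by \eqref{mdHD} applied with $H = r^\bot$ for each candidate non-listed mirror, together with \eqref{mdunion}, it suffices to bound $d([\br], H) + \max\{d([\br], z_{k-1}), d([\br], z_k)\}$ and the term $d([q],H)+\cdots$ from above and compare against the distance $d([\br], r^\bot)$ forced by Theorem \ref{th-el}(c) for a non-simple or non-adjacent mirror. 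Concretely I would compute, from \eqref{eq-defalpha} and the inner-product normalizations $\ip{\rho_i}{\rho_j}\in\{0,\sqrt3\}$, $\ip{\rho_i}{\br}=\nr$, the explicit norms $\N{z_k}$ and the Hermitian products $\ip{\br}{z_k}$, hence the explicit values of $d([\br], z_k)$ via $\cosh$. These are finitely many trigonometric quantities depending only on whether $\ip{\rho_1}{\rho_2}=0$ or $\sqrt3$, so the estimate \eqref{mdHD} yields a single numerical radius $R$ with $\md_{[\br]}(C) \le R$.

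\medskip

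The finite check then proceeds in two stages. Stage one: the radius bound $R$, combined with the fact (Theorem \ref{th-el}(c)) that only the simple mirrors achieve the minimal distance $d_0$ while all others are uniformly further, cuts the infinite mirror arrangement down to the simple mirrors lying within distance $R$ of $[\br]$, a finite configuration controlled by the $2.\op{L}_3(3)$-action. Stage two: among these I must rule out every simple mirror except the four listed, and verify those four genuinely meet $C$. For a simple mirror $s^\bot$ with root $s\notin\{r_1,r_2,\phi_{r_1}(r_2),\phi_{r_2}(r_1)\}$ I would show the cone $C$ separates from $s^\bot$ by testing the sign of $\ip{s}{\cdot}$ on the generating vectors $q, z_0,\dots,z_6$ and their $\sigma_2$-counterparts: if $\ip{s}{\cdot}$ does not vanish anywhere on $\Conv$ of these vectors, then $s^\bot\cap C=\emptyset$, since a complex linear functional vanishing on a point of a real convex hull but nowhere on its vertices is impossible once one stays in the totally real slice where all products are real. \textbf{The hard part} will be this last separation bookkeeping: verifying for the relevant finitely many roots $s$ that $\ip{s}{z_k}$ and $\ip{s}{q}$ never conspire to put a zero of the functional inside the cell, especially distinguishing the braiding case ($\alpha = 3-\sqrt3$) from the commuting case ($\alpha=3$), where the geometry of $\sigma_1\cup\sigma_2$ differs. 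I expect the cleanest route is to carry out the whole computation inside the fixed complex-hyperbolic plane $\PP_+(\CC\rho_1+\CC\rho_2+\CC\br)$, reducing every distance and every inner product to the explicit $3\times 3$ Gram data, and then to invoke the arrangement information of Theorem \ref{th-el}(c) to discard all mirrors outside this plane at one stroke.
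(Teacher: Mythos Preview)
Your overall architecture---bound $\md_{[\br]}(C)$, reduce to a finite list of mirrors, then check those by hand---is right, and matches the paper's strategy in spirit. But two of your reductions do not actually go through, and the paper's proof has to work considerably harder at exactly those points.

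First, the radius bound alone does \emph{not} cut the candidate mirrors down to the simple ones. Theorem \ref{th-el}(c) only tells you that the simple mirrors realise the minimum distance $d_0$; it gives no quantitative gap to the next-closest mirrors. In fact the ball of radius $\md_{[\br]}(C)$ around $[\br]$ (equivalently, the height bound $\op{ht}(r)\le 2.18$ one gets from $d([\br],q)$) already contains, besides the $26$ simple mirrors, the $104$ non-simple mirrors coming from roots of the form $\phi_x^{\pm}(l)$ with $x\in l$, whose height is $\lvert 1+\xi\rvert\approx 1.93$. So your ``Stage one'' leaves you with an infinite problem unless you bring in more constraints. The paper supplies these by estimating not just $d([\br],r^{\bot})$ but also $d(\rho_i^{\bot},r^{\bot})$ for every simple root $\rho_i$ and $d(w_{\cP},r^{\bot})$, via the same $\md$-machinery applied to a smaller triangle $\Delta=\Conv(p_1,q,s)$. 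Those bounds, combined with the arithmetic fact that $\ip{x_i}{r}\in p\cE$, force $\lvert\ip{x_i}{r}\rvert^2,\lvert\ip{w_{\cP}}{r}\rvert^2\in\{0,3,9\}$, and only then does the norm equation $\sum\lvert\ip{x_i}{r}\rvert^2=9+\lvert\ip{w_{\cP}}{r}\rvert^2$ yield a genuinely finite list of root types. The surviving $130$ mirrors are then checked by computer.

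Second, your proposal to ``carry out the whole computation inside $\PP_+(\CC\rho_1+\CC\rho_2+\CC\br)$ and discard all mirrors outside this plane at one stroke'' is not valid: a mirror $r^{\bot}$ is a complex hyperplane in $\CC H^{13}$, so its intersection with any $\CC H^2$ slice is generically a complex line, regardless of whether $r$ lies in the span of $\rho_1,\rho_2,\br$. Mirrors ``outside the plane'' still cut through your cell. Relatedly, your sign-of-$\ip{s}{\cdot}$ separation test needs care because $\ip{s}{z_k}$ is complex, not real, for most simple roots $s$; the paper handles this by checking whether $\ip{r}{\br},\ip{r}{p_1},\ip{r}{q}$ share a common argument (so no real convex combination vanishes), and otherwise solving two real linear equations for the intersection point and verifying it lies outside $\Delta_1$.

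One organisational point you may also want to borrow: rather than treating the twelve triangles in $C$ separately, the paper observes that $C$ is the orbit of the single quadrilateral $T=\Conv(\br,p_1,q)\cup\Conv(\br,p_2,q)$ under the dihedral group $\langle\phi_1,\phi_2\rangle$, and a diagram automorphism swaps the two halves of $T$. So everything reduces to analysing the one triangle $\Conv(\br,p_1,q)$, which is where all the real work (Lemma \ref{meetdelta}) happens.
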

Note that $\sigma_1$ and $\sigma_2$ are curves in $Y$ (not in $Y^{\circ}$).
The curve $\sigma_1$ contains the points $z_1$, $z_3$ and $z_5$ which
are on the mirrors. So we need to modify $\sigma_1$ to
avoid this points. For this, fix a small positive real number $\epsilon$.
For $j = 1, 2, 3$, let
\begin{equation}
z_{2j - 1}^{-} = z_{2j - 1} + \epsilon z_{2j -2} \text{\; and \;} z_{2j - 1}^+ = z_{2j - 1} + \epsilon z_{2j}
\label{eq-defzjpm}
\end{equation}
and let $\tilde{\gamma}_j$ be a path in $V_+$ that goes from $z_{2j -2}$ to $z_{2j}$ along the curve 
\begin{equation}
\Conv( z_{2j - 2}, z_{2j -1}^-) \cup \Conv(z_{2j - 1}^-, z_{ 2j - 1}^+) \cup \Conv(z_{2j-1}^+, z_{2j}).
\label{eq-defgammatilde}
\end{equation}
Define $\sigma_1' = \tilde{\gamma}_1 * \tilde{\gamma}_2 * \tilde{\gamma}_3$ (see figure \ref{fig-2}).
Similarly define $\sigma_2'$, by interchanging the role of $r_1$ and $r_2$.
Lemma \ref{lemma-def} implies that $[\tilde{\gamma}_1] = [\gamma]_1$. Similarly,
$[\tilde{\gamma}_2] = \phi_1[\gamma]_2$ and $[\tilde{\gamma}_3] = \phi_1 \phi_2 [\gamma]_1$.
So $g_1 g_2 g_1 = ([\sigma_1'], t)$. Similarly $g_2 g_1 g_2 = ([\sigma_2' ],t )$.
We need to prove that $\sigma_1' \sim \sigma_2'$. This follows
from the lemma given below.
\begin{lemma}
Let $q' = q + \epsilon \br$. Then the $2$-cell $C' = \Conv( q', \sigma_1' \cup \sigma_2') \subseteq V_+$ does
not intersect any mirrors.
\label{lemma-perturb}
\end{lemma}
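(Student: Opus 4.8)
The plan is to deduce Lemma \ref{lemma-perturb} from Lemma \ref{meetdeltabraid} by a perturbation argument. The cell $C'$ is a small deformation of the cell $C = \Conv(q, \sigma_1 \cup \sigma_2)$: as $\epsilon \to 0$ the apex $q' = q + \epsilon \br$ tends to $q$ and the perturbed points $z_{2j-1}^{\pm}$ of \eqref{eq-defzjpm} tend to $z_{2j-1}$, so $\sigma_1' \to \sigma_1$, $\sigma_2' \to \sigma_2$, and hence $C' \to C$ in the Hausdorff metric. By Lemma \ref{meetdeltabraid}, $C$ meets only the four mirrors $r_1^{\bot}$, $r_2^{\bot}$, $\phi_{r_1}(r_2)^{\bot}$, $\phi_{r_2}(r_1)^{\bot}$. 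The proof therefore splits into two parts: (I) for $\epsilon$ small, $C'$ can meet no mirror other than these four; and (II) the explicit perturbation pushes $C'$ off each of the four.

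For part (I) I would invoke the discreteness of $\Gamma$. Since $C \subseteq V^+$ is compact, its image in $Y \simeq \CC H^{13}$ is a compact set bounded away from the ideal boundary, so by local finiteness of the mirror arrangement only finitely many mirrors come within distance $1$ of $[C]$. Of these exactly four meet $[C]$; the remaining finitely many, together with all mirrors beyond distance $1$, lie at distance at least some $2\eta > 0$ from $[C]$. Because $C' \to C$, for $\epsilon$ small enough $[C']$ lies inside the $\eta$--neighbourhood of $[C]$ (and $C' \subseteq V^+$, as $V^+$ is open and contains the compact set $C$). Hence $C'$ avoids every mirror except possibly the four named ones.

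For part (II), observe that $\sigma_1'$ and $\sigma_2'$ are piecewise linear, each a concatenation of segments $\Conv(\cdot,\cdot)$ joining consecutive points among $z_0, z_1^{\pm}, z_2, z_3^{\pm}, z_4, z_5^{\pm}, z_6$ (and their analogues for $\sigma_2'$). Thus $C' = \Conv(q', \sigma_1' \cup \sigma_2')$ is a finite union of Euclidean triangles $T = \Conv(q', a, b)$ with $[a,b]$ one such segment. Fix one of the four roots $r$ and put $f = \ip{r}{\cdot}$. As $f$ is complex linear, its restriction to the affine triangle $T$ is affine, so $f$ vanishes on $T$ if and only if $0$ lies in the closed triangle of $\CC$ with vertices $f(q'), f(a), f(b)$. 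This reduces part (II) to a finite check with explicitly computable entries. The apex is pushed off the mirror: for $r = r_1$ one has $f(q') = \epsilon \ip{r_1}{\br} \ne 0$ since $q \in r_1^{\bot}$, and likewise $f(z_1^-) = \epsilon \ip{r_1}{\br}$ and $f(z_1^+) = \epsilon \omega \ip{r_1}{\br}$ using $\ip{r_1}{\phi_1 \br} = \omega \ip{r_1}{\br}$; so on the triangles at the old contact point $z_1 = p_1$ all three vertex values lie in the sector between $\epsilon \ip{r_1}{\br}$ and $\epsilon \omega \ip{r_1}{\br}$, of opening $120^\circ$, and the corresponding triangles in $\CC$ do not contain $0$. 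The other triangles are treated with the inner products $\ip{r}{z_j}$ already recorded in the proof of Lemma \ref{meetdeltabraid}, and the remaining three roots follow by the symmetry interchanging $r_1, r_2$ and applying $\phi_1, \phi_2$.

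The main obstacle is part (II): organizing the finite inner-product bookkeeping so that, for every triangle $T$, the origin lies outside the image triangle in $\CC$. The delicate point is that $f(q') = \epsilon \ip{r}{\br}$ is small, so on a triangle whose other two $f$--values are of order one the origin could a priori be enclosed; ruling this out is exactly what the carefully chosen perturbation directions (the neighbours $z_{2j-2}, z_{2j}$ and the vector $\br$, all having non-negative real inner product with the $\rho_i$) are designed to guarantee.
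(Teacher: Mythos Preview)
Your proposal is correct and follows essentially the same two-step approach as the paper: first reduce, via local finiteness of the mirror arrangement and $C' \to C$ as $\epsilon \to 0$, to the four mirrors named in Lemma \ref{meetdeltabraid}; then decompose $C'$ into Euclidean triangles with apex $q'$ and verify that for each of the four roots $r_j$ the affine image of each triangle under $\ip{\cdot}{r_j}$ misses $0 \in \CC$. The paper executes your Part (II) by tabulating $c_j^k = \ip{z_k}{r_j}/\abs{\br}^2$ for all $j,k$ (Table \ref{table-2}) and observing that for each fixed $j$ the seven values $c_j^0,\dotsc,c_j^6$ lie in a single closed half-plane of $\CC$ (concretely $\Re(\xi c_j^k) \geq 0$); this half-plane condition is exactly the mechanism that resolves what you correctly flag as the ``delicate point'' (a small apex value $\epsilon c_j^0$ cannot combine with two order-one values to enclose the origin if all three lie in an open half-plane), and it disposes of all four roots in parallel rather than by the symmetry you suggest---note that $\langle \phi_1,\phi_2\rangle$ does not preserve $C'$, and the relevant inner products are with $r_1,\dotsc,r_4$, not with the simple $\rho_i$, so your parenthetical ``non-negative real inner product with the $\rho_i$'' should be replaced by this half-plane statement.
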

Given lemma \ref{meetdeltabraid}, the proof of \ref{lemma-perturb} is easy.
Since $C'$ is a small perturbation of $C$, it suffices to check that
the four mirrors intersecting $C$, do not intersect $C'$. This can
be checked by hand.
This completes the argument in the case when $\phi_1$ and $\phi_2$ braids.
When $\phi_1$ and $\phi_2$ commutes, the argument is similar and the
calculations are simpler. We briefly indicate the modifications 
that are needed. 
\par
Assume that $\phi_1$ and $\phi_2$ commute. Let 
\begin{equation*}
z_0 = \br, \text{\;\;} z_1 = p_1, \text{\;\;} z_2 = \phi_1(\br), \text{\;\;} 
z_3 = \phi_1(p_2), \text{\;\;} z_4 = \phi_1 \phi_2 (\br).
\end{equation*}
Instead of lemma \ref{meetdeltabraid} we have the following lemma:
\begin{lemma}
Let $\sigma_1 = \cup_{j = 1}^4 \Conv(z_{j-1},z_j)$. Define $\sigma_2$ similarly by
interchanging the roles of $r_1$ and $r_2$.
Then the $2$-cell $C = \Conv(q, \sigma_1 \cup \sigma_2)$ only intersects the
mirrors $r_1^{\bot}$ and $r_2^{\bot}$.
\label{meetdeltacommute}
\end{lemma}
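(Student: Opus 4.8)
The plan is to prove Lemma \ref{meetdeltacommute} as a simpler, degenerate version of the braiding case in Lemma \ref{meetdeltabraid}, exploiting that when $\phi_1$ and $\phi_2$ commute the two simple roots $\rho_1$, $\rho_2$ are orthogonal (so $\alpha = 3$ and $q = \br + \tfrac{\nr}{3}(\rho_1 + \rho_2)$ is the projection of $\br$ onto $\rho_1^{\bot} \cap \rho_2^{\bot}$). The key structural fact I will use throughout is \ref{th-el}(c): $d(r^{\bot}, \br) \geq d_0$ for every root $r$, with equality exactly for the simple roots. Thus any mirror meeting the compact cell $C$ must come within some bounded distance of $\br$, and I can control which mirrors these are by the distance estimates of \ref{distbasic}.

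First I would observe that the five vertices $z_0, \dots, z_4$ and the point $q$ all lie in the totally real, negative-definite configuration spanned by $\rho_1, \rho_2$ translated off $\br$, so the inner products $\ip{z_{j-1}}{z_j}$ are in $\RR_+$ and each $\Conv(z_{j-1}, z_j)$ is a genuine geodesic segment; this makes $\sigma_1$, $\sigma_2$ honest piecewise-geodesic curves and $C = \Conv(q, \sigma_1 \cup \sigma_2)$ a union of geodesic triangles to which the convexity estimate \eqref{mdHD} applies. Next I would bound $\md_{\br}(C)$: using \eqref{mdunion} and \eqref{mdHD}, the maximal distance from $\br$ to a point of $C$ is attained at one of the vertices $z_j$ or $q$, and each of these is an explicit point ($z_2 = \phi_1(\br)$ is at distance $2 d(\br, p_1) = 2 d_0'$ where $d_0' = d(\br, \rho_1^{\bot})$, etc.), so I get an explicit upper bound $R$ on $\md_{\br}(C)$.

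The heart of the argument is then a finiteness-and-elimination step. By \ref{th-el}(c), any mirror $r^{\bot}$ meeting $C$ satisfies $d(r^{\bot}, \br) \leq R$; since the mirrors are discrete and $\Gamma$-arranged, only finitely many roots $r$ have $d(r^{\bot}, \br) \leq R$, and I would enumerate them (they are the simple mirrors together with the next shell or two, determined by the lattice geometry of $L$ exactly as in \cite{TB:EL}). For each such candidate root I would check whether $r^{\bot}$ actually meets the cell $C$; because $C$ lies in the two-complex-dimensional span $\PP_+(\CC\rho_1 + \CC\rho_2 + \CC\br)$, this reduces to a concrete low-dimensional intersection computation, solving $\ip{r}{t_1 q + t_2 z_i + t_3 z_j} = 0$ with $t_k \geq 0$, $\sum t_k = 1$. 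The mirrors $r_1^{\bot}$ and $r_2^{\bot}$ do meet $C$ (they pass through the vertices $z_1$, $z_3$ and their images), and the task is to show every other candidate is disjoint from $C$. Here the commuting case is genuinely easier than Lemma \ref{meetdeltabraid}: since $\rho_1 \perp \rho_2$, the curve $\sigma_1$ does not bend toward any image mirror $\phi_{r_1}(r_2)^{\bot}$, so the two ``extra'' braiding mirrors of Lemma \ref{meetdeltabraid} simply do not arise, and the candidate list collapses to $r_1^{\bot}$ and $r_2^{\bot}$ alone.

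The main obstacle I anticipate is making the enumeration of candidate mirrors genuinely exhaustive rather than merely plausible: the bound $R$ must be sharp enough that the finite shell of roots with $d(r^{\bot}, \br) \leq R$ is small and explicitly listable, and this rests on quantitative control of the mirror arrangement near $\br$ that \ref{th-el}(c) guarantees qualitatively but which must be turned into numbers using the gram data \eqref{eq-ipwpwl} and \eqref{eq-defrho}. Once the finite candidate set is pinned down, the elimination is the same kind of by-hand check promised after Lemma \ref{lemma-perturb}, and since no perturbation of the $z_j$ off the mirrors is needed until the passage to $\sigma_1'$, $\sigma_2'$, the commuting computation is strictly a specialization of the braiding one with the orthogonality $\ip{\rho_1}{\rho_2} = 0$ simplifying every inner product.
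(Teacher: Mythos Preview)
Your overall strategy (bound the distance from $\br$, enumerate candidate mirrors, eliminate) matches the paper's, but you are missing the single structural step that makes the paper's argument work cleanly. The paper does \emph{not} analyse the whole cell $C$. It first observes that, because $q$ is fixed by the dihedral group $\langle \phi_1,\phi_2\rangle$, the cell decomposes as
\[
C \;=\; T \cup \phi_1(T)\cup \phi_2(T)\cup \phi_1\phi_2(T),
\qquad T=\Conv(\br,p_1,q)\cup\Conv(\br,p_2,q),
\]
and then, by the diagram automorphism swapping $\rho_1$ and $\rho_2$, reduces further to the single triangle $\Delta_1=\Conv(\br,p_1,q)$. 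Everything is then Lemma~\ref{meetdelta}. This reduction is what keeps the distance bound small: the farthest vertex of $\Delta_1$ from $\br$ is $q$, giving $\op{ht}(r)\le 2.18$, whereas your bound $R=\md_{\br}(C)$ is governed by $d(\br,z_4)=d(\br,\phi_1\phi_2\br)$, which is strictly larger (and, incidentally, $d(\br,\phi_1\br)$ is \emph{not} equal to $2\,d(\br,p_1)$ in $\CC H^{13}$; Lemma~\ref{lemma-def} only shows $d(\br,M)<d(\br,p_1)$).

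The second difference is in the enumeration. A height bound alone does not give an explicit finite list; the paper instead bounds, for each of the thirteen $x_i\in\cP$ and for $w_{\cP}$ separately, the distance $d(\rho_i^{\bot},\Delta)$ and $d(w_{\cP},\Delta)$ via \eqref{mdHD}. This forces $\abs{\ip{x_i}{r}}^2,\abs{\ip{w_{\cP}}{r}}^2\in\{0,3,9\}$, and then the norm equation $\sum\abs{\ip{x_i}{r}}^2=9+\abs{\ip{w_{\cP}}{r}}^2$ yields the finite table (Table~\ref{table-1}) of seven root-types. The height bound is only applied afterwards to discard types $5$--$7$; types $2,3$ are empty, and types $1,4$ give exactly $130$ mirrors, which are then checked by computer against the single triangle $\Delta_1$. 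Your proposal correctly anticipates that this is where the work lies, but the mechanism you sketch (``next shell or two'') is not how the finiteness is actually obtained. Once Lemma~\ref{meetdelta} is in hand, the commuting case you want falls out immediately from the displayed decomposition of $C$, with no extra mirrors precisely because $\phi_{r_1}(r_2^{\bot})=r_2^{\bot}$ when $\ip{\rho_1}{\rho_2}=0$---the point you correctly identified.
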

Define $z_{2j -1}^{\pm}$ and $\tilde{\gamma}_j$ for $j = 1, 2$, by
the formulas given in \eqref{eq-defzjpm} and \eqref{eq-defgammatilde}. 
Let $\sigma_1' = \tilde{\gamma}_1 * \tilde{\gamma}_2$.
Similarly define $\sigma_2'$. With this setting, one has to re-prove
lemma \ref{lemma-perturb}, which amounts to checking that $r_1^{\bot}$
and $r_2^{\bot}$ do not intersect $C'$.
\end{proof}
%
%
%*******************************************************************************************************
%
It remains to prove the lemmas stated in the sketch above. 
\begin{proof}[proof of \ref{meetdeltabraid} and \ref{meetdeltacommute}].
Let $p_1$, $p_2$ and $q$ be as given in \eqref{eq-defalpha}. 
Note that the inner products between $\br, p_1, p_2$ and $q$ are all real
and they lie in a totally geodesic subspace $P$ of $\CC H^{13}$ isomorphic to the real
hyperbolic plane. 
The $2$-cell $C \subseteq V^+$  given in \ref{meetdeltabraid} (resp. \ref{meetdeltacommute})
is a union of $12$ (resp. $8$) Euclidean triangles and $[C]$ is a union
of $12$ (resp. $8$) totally geodesic triangles in $Y$ (because $q$ has real inner product
with the vertices of $\sigma_1$ and $\sigma_2$). The boundary of $C$ is $\sigma_1 \cup \sigma_2$. 
\par
Consider the quadrilateral $T = \Conv(\br, p_1, q) \cup \Conv(\br, p_2, q)$.
If $\phi_1$ and $\phi_2$ braid, then, with $C$ as given in \ref{meetdeltabraid}, we have,
\begin{equation*}
C = T \cup \phi_1(T) \cup \phi_2(T) \cup 
\phi_1 \phi_2 (T) \cup \phi_2 \phi_1 (T) \cup \phi_1 \phi_2 \phi_1 (T).
\end{equation*}
If $\phi_1$ and $\phi_2$ commute, then, with $C$ as given in \ref{meetdeltacommute}, we have,
\begin{equation*}
C = T \cup \phi_1(T) \cup \phi_2(T) \cup  \phi_1 \phi_2 (T).
\end{equation*}
There is a diagram automorphism that interchanges $\rho_1$ and $\rho_2$, so the intersection of
the triangle $\Conv(\br, p_2, q)$ with the mirrors is exactly similar to that of
the triangle $\Conv(\br, p_1, q)$. 
Lemma \ref{meetdeltabraid} and \ref{meetdeltacommute} now follows from lemma \ref{meetdelta},
given below.
\end{proof}
\begin{lemma} The triangle $\Conv(\br, p_1, q)$ meets the mirror $\rho_1^{\bot}$
along the edge $\Conv(p_1, q)$, meets $\rho_2^{\bot}$ at $q$ and,
in the case when $\ip{\rho_1}{\rho_2} = \sqrt{3}$ it also meets $\phi_{\rho_1}^{\pm}(\rho_2)^{\bot}$
at $q$. Except for these cases no other mirrors intersects
$\Conv(\br, p_1, q) \setminus \lbrace p_1 \rbrace$.
\label{meetdelta}
\end{lemma}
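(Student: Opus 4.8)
The plan is to reduce the whole statement to a single linear condition describing when a mirror passes through the triangle, and then to control the finitely many roots that can satisfy it by combining a distance estimate with Theorem \ref{th-el}(c). First I would parametrize the triangle: writing a general point of $\Conv(\br,p_1,q)$ as $w = t_0\br + t_1 p_1 + t_2 q$ with $t_i\ge 0$, $t_0+t_1+t_2=1$, the formulas \eqref{eq-defalpha} give
\[
w = \br + \nr\,(u\,\rho_1 + v\,\rho_2),\qquad u = \tfrac{t_1}{3}+\tfrac{t_2}{\alpha},\quad v = \tfrac{t_2}{\alpha},
\]
so $(u,v)$ ranges over a triangular region $R$ with $0\le v\le u$, the vertex $p_1$ being $(u,v)=(\tfrac13,0)$ and $q$ being $(\tfrac1\alpha,\tfrac1\alpha)$. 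A root $s$ then has $s^\bot$ through $w$ precisely when
\[
\ip{s}{\br} + \nr\bigl(u\,\ip{s}{\rho_1} + v\,\ip{s}{\rho_2}\bigr) = 0 .
\]
Using $\ip{\rho_i}{\br}=\nr$, $\abs{\rho_i}^2=-3$ and $\ip{\rho_1}{\rho_2}\in\{0,\sqrt3\}$, the substitution $s=\rho_1$ reduces this to $1-3u+v\,\ip{\rho_1}{\rho_2}=0$, whose solution set inside $R$ is exactly the edge $\Conv(p_1,q)$; the case $s=\rho_2$ gives $1+u\,\ip{\rho_1}{\rho_2}-3v=0$, which meets $R$ only at $q$; and when $\ip{\rho_1}{\rho_2}=\sqrt3$ the same computation for $s=\phi_{\rho_1}^{\pm}(\rho_2)$ (which lies on $\rho_1^\bot$ and is orthogonal to $q$, since $\phi_{\rho_1}$ fixes $\rho_1^\bot\ni q$ and $q\in\rho_2^\bot$) shows its mirror meets $R$ only at $q$. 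This establishes all the positive assertions of the lemma.

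Second, I would bound the roots whose mirrors can reach $\Delta=\Conv(\br,p_1,q)$. If $s^\bot$ meets $\Delta$ at $w$, then by the estimates in \ref{distbasic} we have $d(\br,s^\bot)\le d(\br,w)\le \md_{\br}(\Delta) = \max\{d(\br,p_1),d(\br,q)\} = d(\br,q)$, the last equality because $d(\br,p_1)=d_0$ is the minimal mirror distance. Since $\sinh^2 d(\br,s^\bot) = \abs{\ip{\br}{s}}^2/(3\nr)$, this becomes an explicit upper bound $\op{ht}(s)\le \op{ht}^\ast$ on the height, where $\op{ht}^\ast$ is read off from $d(\br,q)$. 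On the other hand Theorem \ref{th-el}(c) says the simple mirrors are closest to $\br$, i.e.\ $\op{ht}(s)\ge 1$ for every root, with equality exactly for simple roots. Thus a mirror meeting $\Delta$ must come from a root $s$ with $1\le\op{ht}(s)\le\op{ht}^\ast$. Because $\ip{\br}{s}=\tfrac{1}{2(4+\sqrt3)}\bigl(\ip{w_{\cP}}{s}+\bar\xi\,\ip{w_{\cL}}{s}\bigr)$ with $\ip{w_{\cP}}{s},\ip{w_{\cL}}{s}\in\cE$, and since vectors of norm $-3$ with $\abs{\ip{\br}{s}}$ bounded above form a finite set, only finitely many roots, up to the stabilizer $Q$ of $\br$, are candidates.

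Finally, for each candidate $s$ the meeting condition is a single $\CC$--linear, hence two real--linear, equation in $(u,v)$; solving it and testing membership in $R\setminus\{p_1\}$ shows that the only roots whose mirrors actually enter the triangle are $\rho_1$, $\rho_2$ and, when $\ip{\rho_1}{\rho_2}=\sqrt3$, $\phi_{\rho_1}^{\pm}(\rho_2)$, with the intersections as stated. I expect this finite enumeration-and-check to be the main obstacle: producing the explicit candidate list needs the detailed description of the roots of $L$ near $\br$ from \cite{TB:EL}, and the delicate point is that several \emph{non-simple} roots have mirrors within distance $d(\br,q)$ of $\br$, so one must verify that each such mirror just misses the small triangle $\Delta$, touching it, if at all, only at the farthest vertex $q$, rather than cutting through its interior. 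By the transitivity of $Q$ already used in \ref{th-el} and in the proof of \ref{meetdeltabraid}, it should suffice to run this check for one representative pair $(\rho_1,\rho_2)$ in each of the two cases $\ip{\rho_1}{\rho_2}\in\{0,\sqrt3\}$.
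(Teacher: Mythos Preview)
Your outline is essentially the paper's strategy: bound the height of any root whose mirror can reach the triangle, reduce to a finite candidate list, and then check each candidate by solving the linear condition for intersection. Two points deserve correction or sharpening.

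First, a concrete error: there are \emph{three} $Q$--orbits of unordered pairs $\{\rho_1,\rho_2\}$, not two. The case $\ip{\rho_1}{\rho_2}=0$ splits according to whether $\rho_1,\rho_2$ lie in the same part of the bipartition $\cP\cup\cL$ or in different parts; these are genuinely different orbits under $2.\op{L}_3(3)$, and the paper treats them separately (taking representatives $(a,c_1)$ and $(a,\xi d_1)$). Your reduction to ``one representative pair in each of the two cases'' would miss one.

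Second, your finiteness step is where the real work hides, and the paper handles it by a device you do not mention. The height bound $\op{ht}(r)\le 2.18$ alone does not give a usable enumeration. Instead the paper first excises the sub-triangle $\Conv(\br,p_1,s)$ of radius $d_0$ about $\br$ (which no mirror can enter except at its vertices, by \ref{th-el}(c)), and then bounds, for the remaining triangle $\Conv(p_1,q,s)$, not only $d(\br,r^{\bot})$ but also $d(\rho_i^{\bot},r^{\bot})$ for each simple $\rho_i$ and $d(w_{\cP},r^{\bot})$. These translate into $\abs{\ip{x_i}{r}}^2,\abs{\ip{w_{\cP}}{r}}^2\in\{0,3,9\}$, which, after expanding $r$ in the orthogonal basis $(w_{\cP},x_1,\dotsc,x_{13})$ and using $\abs{r}^2=-3$, pins down the candidates to an explicit finite list (the paper's Table~1). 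The height bound then eliminates most types, and the surviving $130$ mirrors are checked by computer. Without this extra step of bounding the inner products $\ip{x_i}{r}$, you have finiteness in principle but no practical way to produce the list.
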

%
%
%*******************************************************************************************************
%
\begin{proof}[proof of lemma \ref{meetdelta}]
The proof given below uses some computer verification. These calculations
were performed using the GP/PARI calculator. The codes are contained
in the file pi1.gp available at www.math.uchicago.edu/\textasciitilde tathagat/codes/index.html.
\par
The diagonal action of $2.\op{PGL}(3, \FF_3)$ on distinct pairs of simple roots has three orbits:
\begin{enumerate}
\item $\rho_1 \in \cP$, $\bar{\xi} \rho_2 \in \cL$ and $\ip{\rho_1}{\rho_2} =\sqrt{3}$;
for calculation we take $\rho_1 = a, \rho_2 = \xi f$.
\item $\rho_1 \in \cP$, $\bar{\xi} \rho_2 \in \cL$ and $\ip{\rho_1}{\rho_2} = 0$; 
for calculation we take $\rho_1 = a, \rho_2 = \xi d_1$.
\item $\rho_1 \in \cP$ and $\rho_2 \in \cP$; 
for calculation we take $\rho_1 = a, \rho_2 = c_1$.
\end{enumerate}
Accordingly, we have to consider three cases in the calculations below.
If not stated otherwise, the statements below are made for all three cases.
It will be convenient to define the {\it height} of a root $r$, denoted by
$\op{ht}(r)$ as follows: 
\begin{equation*}
 \tilde{\op{ht}}(r) = \ip{\br}{r}/\abs{\br}^2 \text{\;\; and \; \;}
\op{ht}(r) = \abs{\tilde{\op{ht}}(r)}.
\end{equation*}
\par
Let $d_0 = d(\br, p_1)$. This is the minimum distance from $\br$ to any mirror.
Suppose $r^{\bot}$ is a mirror that intersects the triangle $\Delta_1 = \Conv(\br, p_1, q)$.
The longest edge of the triangle $\Delta_1$ is $\Conv(\br, q)$. 
So 
\begin{equation*}
\sinh^{-1}\Bigl( \frac{\abs{\ip{r}{\br} }}{\abs{r}\abs{\br} }  \Bigr) = d(r^{\bot}, \br) \leq \md_{\br}(\Delta_1) = d(\br, q).
\end{equation*}
This gives a bound for the possible height of the root $r$:
\begin{equation}
\label{htbound}
\op{ht}(r) = \frac{\abs{\ip{r}{\br}}}{\nr} \leq 
\frac{\abs{r}}{\abs{\br}} \sinh \Bigl( \cosh^{-1}\Bigl(  \frac{\abs{\ip{\br}{q}}}{\abs{\br}\abs{q}} \Bigr) \Bigr)
\leq 2.18.
\end{equation}
Let $s$ be the point on $\Conv(\br, q)$
such that $d(\br, s) = d_0$. The triangle $\Conv(\br, p_1, s)$ cannot meet any
mirror except at $p_1$ and possibly at $s$.
So the mirror $r^{\bot}$ must intersect the triangle $\Delta = \Conv(p_1, q, s)$. The situation is
depicted in the figure \ref{fig-3}.
\begin{figure}
\[
\begin{pspicture}(-5,-2.5)(5,2.5)
\psset{unit=1cm}
\psline[linewidth=.05cm](-2.2,-1.2)(0,1) \rput(-2.4, -1.4){{\tiny $r_1^{\bot}$}}
\psline[linewidth=.05cm]( 2.2,-1.2)(0,1) \rput( 2.4, -1.4){{\tiny $r_2^{\bot}$}}
\psline(0,-2)(0,1) \rput(.2,-2.1){{ \tiny $\br$}} \rput(.2, 1.1){{ \tiny $q$}}
\psline(-1.5,-.5)(0,-2)(1.5,-.5) \rput(-1.7,-.4){{ \tiny $p_1$}} \rput(1.7, -.4){{ \tiny $p_2$}}
\psline(-1.5,-.5)(0,0)(1.5,-.5)  \rput(.2, .15){{\tiny $s$}}
\psline[linewidth=.05cm](-.4,2)(-.4,.4) \rput(-.7,1.8){{\tiny $r^{\bot}$}} 
\psline[linewidth=.05cm, linestyle=dashed](-.4,.4)(-.4,-1.6)
\psline[linewidth=.05cm](-.4,-1.6)(-.4,-2.2) 
\end{pspicture}
\]
\caption{}
\label{fig-3}
\end{figure}
One can take $s = \br + c( \rho_1 + \rho_2)$ where $c \in \RR_+$ is a constant to be determined. 
The equality $d(s, \br) = d_0$ implies
\begin{equation*}
1 + \frac{\nr}{3} 
= \frac{\abs{\ip{\br}{p_1}}^2}{\nr \abs{p_1}^2} 
= \frac{\abs{\ip{s}{\br}}^2}{\abs{s}^2 \nr } 
= \frac{\abs{\br}^4 ( 1 + 2c)^2}{ (\nr + 4 c \nr - 2 \alpha c^2 )\nr} 
\end{equation*}
(where $\alpha$ is given in \eqref{eq-defalpha}). Rearranging, one has the quadratic equation
\begin{equation*}
( 4 + \tfrac{2 \alpha}{\nr}( 1 + \tfrac{\nr}{3}) ) c^2 - \tfrac{4}{3}\nr c  - \tfrac{\nr}{3} = 0
\end{equation*}
with one positive and one negative root. The positive root gives the required $c$.
\par
Divide the triangle $\Delta$ into two triangles $\Delta'$ and $\Delta''$ by
joining $s$ with the midpoint of $\Conv(p_1,q)$. For $i = 1, \dotsb, 26$, one has,
\begin{equation*}
d(\rho_i^{\bot}, r^{\bot}) \leq \md_{\rho_i^{\bot}}(\Delta) 
= \max\lbrace \md_{\rho_i^{\bot}}(\Delta'), \md_{\rho_i^{\bot}}(\Delta'') \rbrace
\end{equation*}
where the equality is an instance of equation \eqref{mdunion} in \ref{distbasic}.
We estimate $\md_{\rho_i^{\bot}}(\Delta')$ and $\md_{\rho_i^{\bot}}(\Delta'')$
using the inequality \eqref{mdHD} of \ref{distbasic}. This gives a bound on
$\abs{\ip{\rho_i}{r}}^2 = 9 \cosh^2( d(\rho_i^{\bot}, r^{\bot}))$.
By explicit computation, this bound is strictly less than 12 except in three cases
when $\ip{\rho_1}{\rho_2} = \sqrt{3}$ and $\bar{\xi} \rho_i \in \cL$. 
Since the inner product between two vectors of $L$ always lies in
$p \cE$ one has
\begin{equation}
\label{ipxbound}
\abs{\ip{x_i}{r}}^2 \in \lbrace 0, 3, 9 \rbrace \text{\;\; for\;\;} i = 1, \dotsb, 13.
\end{equation} 
Similarly, from the inequality
\begin{equation*}
d(w_{\cP}, r^{\bot}) \leq \md_{w_{\cP}}(\Delta) 
\leq \max \lbrace d(w_{\cP}, p_1), d(w_{\cP}, q), d(w_{\cP}, s) \rbrace,
\end{equation*}
one gets a bound on $\abs{\ip{w_p}{r}}^2 = 9 \sinh^2( d(r^{\bot}, w_{\cP}))$.
By explicit computation the bound is strictly less than $10$. So one has
\begin{equation}
\label{ipwpbound}
\abs{\ip{w_{\cP}}{r}}^2 \in \lbrace 0, 3, 9 \rbrace.
\end{equation}
\par
The conditions \eqref{ipxbound} and \eqref{ipwpbound} restrict the possibilities for $r$ to
a finite set as in the proof of proposition 6.1 in \cite{TB:EL}.
Write $r$ in terms of the orthogonal basis $(w_{\cP}, x_1, \dotsb, x_{13})$ as
\begin{equation*}
r = \ip{w_{\cP}}{r} w_{\cP}/3 - \sum \ip{x_i}{r} x_i/3 .
\end{equation*}
Taking norm and re-arranging one gets 
\begin{equation*}
\sum \abs{\ip{x_i }{r} }^2 = 9 + \abs{\ip{w_{\cP}}{r} }^2
\end{equation*}
which leaves only finitely many possibilities for the vector $(\ip{w_{\cP}}{r}, \ip{x_1}{r}, \dotsb, \ip{x_{13}}{r})$.
The possible inner products of $r$ with $x_i$ and $w_p$ are shown in table \ref{table-1}.
\begin{table}
\begin{tabular}{|c|c|c|c|c|}
\hline
& & & & \\
$j$ &  $\ip{w_{\cP}}{r}$ & possible $(\ip{x_1}{r}, \dotsb, \ip{x_{13}}{r} )$ & $\tilde{\op{ht}}(r)$ 
& $\min(\op{ht}(r))$ \\
 & & & & \\
\hline
$1$ & $0$ & $(3u_1, 0^{12})$ & $-u_1$ & $1$  \\
  & & & & \\
$2$ & & $\theta(u_1, u_2, u_3, 0^{10})$ & $\tfrac{1}{\theta}\sum_{i=1}^3 u_i$ & no root\\
& & & &\\
\hline
$3$ & $\theta$ & $(3 u_1, \theta u_2, 0^{11})$ &$\tfrac{1}{\theta}(-4 - \sqrt{3}+u_1 -\theta u_2)$ & no root\\
& & & & \\
$4$ &    & $\theta(u_1, u_2, u_3, u_4, 0^{9})$ &$\tfrac{1}{\theta}(-4 - \sqrt{3}+\sum_{i=1}^4 u_i )$ & $1$ \\
& & & & \\
\hline
$5$ & $3$    & $3(u_1, u_2, 0^{11})$ & $ 4  +\sqrt{3} - u_1 - u_2$ & $2 + \sqrt{3}$\\ 
& & & &\\
$6$ &    & $(3u_1,\theta u_2,\theta u_3,\theta u_4,0^9)$ & $ 4+\sqrt{3}-u_1+
\tfrac{1}{\theta}\sum_{i=2}^4 u_i $  & $3.24$\\
& & & & \\
$7$ &    & $\theta(u_1,\dotsb, u_6, 0^7)$ & $4 + \sqrt{3} +\tfrac{1}{\theta}\sum_{i=1}^6 u_i$ & $2.73$\\
& & & & \\
\hline
%$8$ & $2 \theta$ & $(3 u_1, 3 u_2, \theta u_3, 0^{10})$ & $\tfrac{i}{\nr}-u_1-u_2 +\tfrac{1}{\theta} u_3$ & $4.309 $\\ 
%& & & &  \\          
%$9$ &           & $(3u_1, \theta u_2, \theta u_3, \theta u_4, \theta u_5, 0^8)$ &$\tfrac{i}{\nr} - u_1 + \tfrac{1}{\theta}\sum_{i=2}^5 u_i$ & $3.479$\\
%& & & & \\       
%$10$ &       & $\theta(u_1, \dotsb, u_7, 0^6)$ & $\tfrac{i}{\nr} + \tfrac{1}{\theta}\sum_{i=1}^7 u_i$ & $2.577$\\
%& & & & \\
%\hline
\end{tabular}
\caption{In each row, we consider roots $r$ of a certain form, determined by the
possible inner products of $r$ with $w_{\cP}$ and with $x_1, \dotsb, x_{13}$ as shown 
in the second and third column respectively. ($u_1, u_2, \dotsb$ stand for sixth roots
of unity.) The roots considered in the $j$-th row are called the roots of type $j$.
The fourth column records $\tilde{\op{ht}}(r) = \ip{\br}{r}/\nr$ for a root $r$ of type $j$.
The fifth column records the minimum possible absolute value of the entry in the fourth
column as $u_i$'s vary over the sixth roots of unity, that is, the minimum possible
height of a root of type $j$. }
\label{table-1}
\end{table}
By inspection of the last three entries of fifth column, we find that the minimum height of a root
having type 5, 6 or 7, is greater than the bound on $\op{ht}(r)$ obtained in \eqref{htbound}.
So mirrors of type 5, 6 or 7 cannot intersect $\Delta_1$. Next, one checks that there are no root
of type 2 or 3. This implies $r$ must be of type 1 or 4. The only mirrors of type 1 are the
thirteen simple mirrors corresponding to $\cP$. Now we make a list of those
type 4 roots that satisfy $\op{ht}(r) < 2.2$ (which is enough in view of inequality \eqref{htbound}).
The list consists of the thirteen simple mirrors corresponding to $\cL$ and the
$104$ mirrors corresponding to the roots of the form $\phi_x^{\pm}(l)$,
where $x \in \cP$, $y \in \cL$ and $x$ is incident on $l$ 
(Experimentally, these are the only roots having height  $\abs{1 + \xi}$.)
We want to show that these $130$ mirrors do not meet $\Delta_1$ except for the cases
described in the statement of lemma \ref{meetdelta}. This is checked on the computer as
follows:
\par
Let $P = \lbrace [\br + s_2 p_1 + s_3 q] \colon s_j \in \RR \rbrace \subseteq \PP(V)$. 
Then $P$ contains the triangle $\Delta_1$.
If the complex numbers $\ip{r}{\br}$, $\ip{r}{p_1}$ and $\ip{r}{q}$ all have the same
argument then we find that $\op{Re}(\ip{r}{\br}) = \op{Re}(\ip{r}{p_1}) = \op{Re}(\ip{r}{q}) = \nr$.
So a convex combination of $\br$, $p_1$ and $q$ cannot be orthogonal to $r$.
Otherwise $r^{\bot} \cap P$ is a point in $\PP(V)$ which can be found by
solving two linear equations. We have checked that this point does not belong to
$\Delta_1$ except in the cases mentioned in the statement of the lemma.
\end{proof}
%
%
%*******************************************************************************************************
%
%
\begin{proof}[proof of lemma \ref{lemma-perturb}]
Assume that $r_1 \in \cP$, $r_2 \in \cL$ and that $\phi_{r_1}$ and $\phi_{r_2}$ braid. Let 
\begin{equation*}
r_3 = \phi_{r_1}(r_2) = \phi_{r_2}^{-1} (r_1) = r_2 + r_1  \text{\; and \;} 
 r_4 = \phi_{r_2}(r_1) = \phi_{r_1}^{-1} ( - \omega r_2) = r_1 - \omega r_2.
\end{equation*}
It suffices to show that the mirrors $\lbrace r_1^{\bot}, \dotsb, r_4^{\bot} \rbrace$
do not meet $C'$.
For this, it is enough to show that $\Conv(q', \sigma_1')$ do not intersect
these four mirrors (since the same argument applies to $\Conv(q', \sigma_2')$ by
symmetry). For $k = 1, \dotsb, 6$, let $\Delta'_k = \op{Conv}(q', z_{k-1}, z_k)$.
For $k = 1, 2, 3$, let $\Delta''_k = \op{Conv}(q', z_{2 k-1}^{-} , z_{2k -1}^+ )$.
So
\begin{equation*}
\op{Conv}(q', \sigma_1') \subseteq  \bigl( \cup_{k = 1}^3 \Delta''_k \bigr) \cup \bigl(  \cup_{k=1}^6 \Delta'_k 
\setminus \lbrace z_1, z_3, z_5 \rbrace \bigr).
\end{equation*}
In table \ref{table-2}, we have recorded some inner products that we are going to need.
\begin{table}
\begin{tabular}{c|ccccccccccccccc|c}
$j \setminus k$ && $0$ && $1$ && $2$ && $3$  && $4$ && $5$ && $6$ && $r_j$\\
\hline 
$1 \;\;$ && $1$         && $0$         &&  $\bar{\omega} $ &&  $ \bar{\omega} c $     &&  $\bar{\omega} - i$  && $-i c$     && $-i$  && $r_1$\\
$2 \;\;$ &&  $\bar{\xi}$ && $\bar{\xi}c$ && $\bar{\xi}-\bar{\omega}$ &&  $-\bar{\omega} c $  && $-\bar{\omega}$ && $0$  && $-\omega$ &&$r_2$\\
$3 \;\;$ &&  $1 + \bar{\xi}$ && $\bar{\xi} c$ && $\bar{\xi}$ && $0$ && $-i$ && $-i c$ && $-i - \omega$ && $r_1 + r_2$\\
$4 \;\;$ && $1 + \xi$ && $\xi c$ && $\xi - \omega$ && $-\omega c$ && $-\omega - i$ && $-i c$  && $-i + \bar{\omega}$ && $r_1 - \omega r_2$\\
\hline
$z_k$ && $\br$ && $p_1$ && $\phi_x(\br)$      && $\phi_x(p_2)$ && $\phi_x \phi_l(\br)$ && $\phi_x \phi_l(p_1)$ && $\phi_l \phi_x \phi_l (\br)$ && \\
&& && && && && && &&
\end{tabular}
\caption{The left and top margin gives the row and column numbers (denoted by $j$ and $k$). 
The $j$-th row of right margin records $r_j$. The $k$-th column in the
bottom margin records $z_k$. 
The entry in the $j$-th row and $k$-th column is $c_j^k = \ip{z_k}{r_j}/\abs{\br}^2$.
Finally, $c = 1 + 3^{-1/2}$.}
\label{table-2}
\end{table}
Looking at the table we make the following observations:
\begin{enumerate}
\item The numbers $c_j^0$ are always nonzero. In each row, atmost one entry is zero.
\item For all $j$ and $k$, we have $\op{Re}( \xi c_j^k ) \geq 0$.   
\item For each $k$, there is an open half plane $P_k \subseteq \CC$
such that  $\lbrace c_j^0, \dotsb, c_j^6 \rbrace \subseteq P_k \cup \lbrace 0 \rbrace$. 
\end{enumerate}
Let $s_1, s_2, s_3 \in \RR$ such that $s_i \geq 0$ and $s_1 + s_2 + s_3 = 1$. 
\par
Let $k \in \lbrace 1, \dotsb, 6\rbrace$ and let $w_k = s_1 q' + s_2 z_{k-1} + s_3 z_k \in \Delta'_k$. 
If $\ip{w_k}{z_j} = 0$, then
\begin{equation}
\ip{w_k}{r_j}/\nr = s_1 \epsilon c_j^0 + s_2 c_j^{k-1} + s_3 c_j^k = 0, 
\label{eq-ipwr}
\end{equation}
where $c_j^k$ are given in table \ref{table-2}. 
The three numbers $\lbrace \epsilon c_j^0, c_j^{k-1}, c_j^k \rbrace$  belong to
$P_k \cup \lbrace 0 \rbrace$ for some open half plane $P_k$.
So a convex combination of these three numbers is zero only when
$c_j^{k-1} = 0$ and $(s_1, s_2, s_3) = (0,1,0)$ or $c_j^k = 0$ and $(s_1, s_2, s_3) = (0,0,1)$.
It follows that equation \eqref{eq-ipwr} holds if and only if the pair $(w_k, r_j)$ is equal to
$(z_1, r_1)$, or $(z_3, r_3)$, or $(z_5, r_2)$.
So the only intersection of $\cup_{k = 0}^6 \Delta'_k $ with $\cup_{j=1}^4 r_j^{\bot}$
is at the points $\lbrace z_1, z_3, z_5 \rbrace$.
\par
Now let $k \in \lbrace 1, 2, 3 \rbrace$ and let 
$w_k = s_1 q' + s_2 z_{2k - 1}^{-} + s_3 z_{2k -1}^+ \in  \Delta''_k$.
Then 
\begin{equation}
\ip{w_k}{r_j}/\nr = s_1 \epsilon c_j^0 + s_2 (c_j^{2 k-1} + \epsilon c_j^{2k -2})  + s_3 (c_j^{2 k - 1} + \epsilon c_j^{2k}). 
\end{equation}
Note that the three numbers
$\lbrace \epsilon c_j^0 , c_j^{2 k-1} + \epsilon c_j^{2k -2} , c_j^{2 k - 1} + \epsilon c_j^{2k} \rbrace$
always belong to some open half plane $P_k$. So a convex combination of these cannot be zero, 
that is, $\cup_{k = 1}^3 \Delta''_k$ does not intersect $\cup_{j = 1}^4 r_j^{\bot}$.
When $\phi_1$ and $\phi_2$ commutes, the calculations are much easier and are omitted.
\end{proof}
%
%
%*******************************************************************************************************
%
%
%
%*******************************************************************************************************
%
%
%*******************************************************************************************************
%
\section{The fixed points of diagram automorphisms}
%
%*******************************************************************************************************
%
%
%*******************************************************************************************************
%
\label{sec-fixed}
Recall, from section \ref{sec-L}, that the diagram automorphisms $\op{PGL}(3, \FF_3)$ point-wise fix a two
dimensional primitive sub-lattice $F$ spanned by $w_{\cP}$ and $w_{\cL}$. Let
$z_0 = w_{\cP} + \theta w_{\cL}$. Let $\Gamma_F = \lbrace g \in \PP \Aut(L) \colon g(F) = F \rbrace$. 
\begin{theorem}
(a) There is an isometry $\beta: \PP_+(F^{\CC}) \to \cH^2$ such that $\beta(\br) = i$,
$\beta(w_{\cP}) = p$, $ \beta(w_{\cL}) = -p^{-1}$ and $\beta(z_0) = 0$. (See figure \ref{fig1}).
\newline
(b) The isometry $\beta$ induces a group homomorphism $c_\beta: \Gamma_F \to \op{PSL}_2(\RR)$,
given by $c_{\beta}(h) = \beta \circ h \circ \beta^{-1}$. 
The image of $c_{\beta}$ contains the congruence subgroup $\Gamma(13) \subseteq \op{PSL}_2(\ZZ)$.
In-fact 
\begin{equation*}
c_{\beta}(\Gamma_F) \cap \op{PSL}_2(\ZZ) = \nu^{-1} \Gamma_0(13) \nu  
\end{equation*}
where $\nu = \bigl( \begin{smallmatrix} 0 & 1 \\ -1 & 5 \end{smallmatrix} \bigr)$. Further, 
$c_{\beta}(\sigma) = S =  \bigl( \begin{smallmatrix} 0 & -1 \\ 1 & 0 \end{smallmatrix} \bigr) $,
where $\sigma \in \op{Aut}(F)$ is the involution that corresponds to interchanging the
points and lines of $\PP^2(\FF_3)$ .
\label{FisoH}
\end{theorem}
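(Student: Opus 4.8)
The plan is to exhibit $\beta$ as the fractional linear map attached to $M=\SM{p}{\omega}{1}{\bar p}$: writing a vector of $F^{\CC}$ as $a w_{\cP}+b w_{\cL}$ and a point of $\PP_+(F^{\CC})$ as $[a:b]$, set $\beta([a:b])=(pa+\omega b)/(a+\bar p b)$. The four prescribed values are then term‑by‑term identities in $\cE$. Using $\N p=p\bar p=3$ one gets $\beta(w_{\cP})=\beta([1:0])=p$ and $\beta(w_{\cL})=\beta([0:1])=\omega/\bar p=-p^{-1}$, the last because $\omega p=-\bar p$ (equivalently $\omega+\bar\omega=-1$); since $p+\omega\theta=(2+\omega)+(\bar\omega-1)=0$ one gets $\beta(z_0)=\beta([1:\theta])=0$; and $\beta(\br)=\beta([1:\xi])=(p+\omega\xi)/(1+\bar p\xi)=i$, because $\omega\xi=i$ together with $\omega+\bar\omega=-1$ yields the single identity $p=i\,\bar p\,\xi$. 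To see $\beta$ is an isometry onto $\cH^2$, note it is an FLT (hence holomorphic) on $\PP^1(\CC)$; the isotropic lines of $F^{\CC}$ form the boundary circle $\partial\PP_+(F^{\CC})$, which $\beta$ carries to a circle/line through $\beta(z_0)=0$ and $\beta(z_\infty)=\infty$, where $z_\infty=w_{\cL}-\bar p\,w_{\cP}$ is the second isotropic vector (one checks $\N{z_\infty}=0$). Checking one further boundary point shows this image is $\RR\cup\{\infty\}=\partial\cH^2$; since the interior point $\br$ goes to $i$, $\beta$ is a biholomorphism $\PP_+(F^{\CC})\to\cH^2$ carrying boundary to boundary, hence an isometry. (Equivalently, $M$ conjugates $\op{gram}(F)_w$ to a standard Lorentzian form, so $\beta$ factors through the standard isometries $b'$ and $C$.)

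\textbf{Part (b), the homomorphism and $c_\beta(\sigma)=S$.} For $g\in\Gamma_F$ the restriction $g|_F$ is an $\cE$‑linear isometry of $(F,\op{gram}(F)_w)$, so its matrix $A_g$ in the basis $(w_{\cP},w_{\cL})$ lies in $\op{GL}_2(\cE)$ and is unitary for $\op{gram}(F)_w$; then $c_\beta(g)$ is the FLT of $MA_gM^{-1}$. Since $A_g$ preserves the Hermitian form, $MA_gM^{-1}$ preserves the real form to which $M$ conjugates $\op{gram}(F)_w$, so $c_\beta(g)\in\op{PSL}_2(\RR)$ and $c_\beta$ is a homomorphism. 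The identity $c_\beta(\sigma)=S$ is a direct computation from the explicit action of $\sigma$ on $F$ given in \cite{TB:EL}: $\sigma$ interchanges $[w_{\cP}]$ and $[w_{\cL}]$, so $A_\sigma$ is anti‑diagonal, and using $\omega p=-\bar p$ one finds $M^{-1}SM$ is anti‑diagonal with unit entries, i.e. $c_\beta(\sigma)=S$ (consistent with $\beta(w_{\cP})=p\mapsto-p^{-1}=\beta(w_{\cL})$).

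\textbf{Part (b), the integral part --- upper bound.} Suppose $g\in\Gamma_F$ with $\gamma:=c_\beta(g)\in\op{PSL}_2(\ZZ)$. Then $A_g=M^{-1}\gamma M$ preserves the lattice $F$, hence lies in $\op{GL}_2(\cE)$. Level $13$ enters through $\det M=p\bar p-\omega=p_1$ with $\N{p_1}=13$. Reducing mod $p_1$ and identifying $\cE/p_1\cong\FF_{13}$ (so $\omega\equiv3$, $p\equiv5$, $\bar p\equiv-2$), the matrix $M$ becomes $\SM{5}{3}{1}{-2}$, which is rank one with column space and kernel both equal to the line $\ell=\langle(5,1)^{T}\rangle\in\PP^1(\FF_{13})$. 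Writing $M^{-1}\gamma M=p_1^{-1}\op{adj}(M)\,\gamma\,M$, integrality at $p_1$ is exactly the condition $\op{adj}(M)\,\gamma\,M\equiv0\pmod{p_1}$, i.e. $\gamma$ fixes $\ell$. Since $\ell=\nu^{-1}[1:0]$ for $\nu=\SM{0}{1}{-1}{5}$ and $\Gamma_0(13)$ is the stabiliser of $[1:0]\in\PP^1(\FF_{13})$, this gives $\gamma\in\nu^{-1}\Gamma_0(13)\nu$, proving $c_\beta(\Gamma_F)\cap\op{PSL}_2(\ZZ)\subseteq\nu^{-1}\Gamma_0(13)\nu$.

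\textbf{Part (b), surjectivity --- the main obstacle.} The hard direction is the reverse inclusion: that every $\gamma\in\nu^{-1}\Gamma_0(13)\nu$ (and in particular every element of $\Gamma(13)$, which lies in $\nu^{-1}\Gamma_0(13)\nu$) is realized, i.e. that the isometry $A=M^{-1}\gamma M\in U(F)$ extends to an automorphism of all of $L$ fixing $F$ setwise. I would prove this by lifting a generating set. Since $X_0(13)$ has genus zero, $\nu^{-1}\Gamma_0(13)\nu$ is generated by $S=c_\beta(\sigma)$ together with finitely many parabolic and elliptic elements; the parabolics are supported at the two Leech‑type cusps $z_0,z_\infty$, whose stabilisers in $\Gamma$ contain translations arising from the Leech structure of $L$, so restricting these to $F$ produces the required $\beta$‑images. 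Each generator I would present as an explicit word in the simple reflections and in $Q\simeq2.\op{L}_3(3)$, then verify by direct computation (as in Lemma \ref{meetdelta}) that the word preserves $F$ and induces the claimed matrix, using the index $[\op{PSL}_2(\ZZ):\Gamma_0(13)]=14$ as a completeness check. Alternatively the extension is abstract: $A$ extends iff its action on the discriminant form of $F$ is matched by an isometry of $F^{\perp}$, and the large automorphism group of $F^{\perp}$ (containing the $\op{L}_3(3)$ that fixes $F$ pointwise) supplies exactly enough freedom on the congruence class cut out mod $p_1$. The containment $c_\beta(\Gamma_F)\supseteq\Gamma(13)$ then follows. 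I expect this lifting/extension step to be the principal difficulty, the upper bound being essentially formal.
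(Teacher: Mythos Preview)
Your Part (a), the homomorphism property, $c_\beta(\sigma)=S$, and the upper bound $c_\beta(\Gamma_F)\cap\op{PSL}_2(\ZZ)\subseteq\nu^{-1}\Gamma_0(13)\nu$ are all correct and match the paper's argument; your mod-$p_1$ reformulation of the integrality condition is in fact a cleaner packaging of the explicit computation in the paper's Lemma \ref{betalemma}(b).

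The surjectivity step is where your proposal remains a sketch, and your second alternative (the glue/discriminant argument) is exactly what the paper does---but the details you omit are the whole point, and one of your remarks is off. The glue group is $\bar L=L/(F\oplus F^\perp)\simeq\cE/13\cE$, sitting inside both $\op{D}(F)$ and $\op{D}(F^\perp)$ (with $3$-group complements), and the extension criterion is that $g_1\in\Aut(F)$ and some $g_2\in\Aut(F^\perp)$ act by the \emph{same scalar} $\lambda\in(\cE/13\cE)^*$ on this cyclic module. Here is the subtle point you miss: the subgroup $\op{L}_3(3)$ you cite is useless for this purpose---it fixes $F$ pointwise, hence acts trivially on $\op{D}(F)$, hence (via the injective glue map) trivially on $\bar L$, hence trivially on $i_{F^\perp}(\bar L)$ as well. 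The actual source of scalars is $\sigma|_{F^\perp}$, which the paper computes acts on the glue as multiplication by $3\bar p$. One must then check that for every $\gamma\in\nu^{-1}\Gamma_0(13)\nu$ the scalar $\lambda$ by which $g_1=\beta^{-1}\gamma\beta$ acts on $i_F(\bar L)$ lies in the cyclic group $\langle 3\bar p\rangle\subset(\cE/13\cE)^*$. Under the Chinese remainder isomorphism $\cE/13\cE\simeq\FF_{13}\times\FF_{13}$ this subgroup is $\{(u,u^{-1})\}$, and the paper's final computation shows the required identity reduces to $\det(\gamma)\equiv1\pmod{13}$, which is automatic. This closed-form reduction is the real content; your first approach (lifting parabolic generators at the Leech cusps by explicit words in the simple reflections) would work in principle but is not what the paper does and would be considerably more laborious.
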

\begin{remark}
The stabilizer in $\Aut(L)$ of the norm zero vector $z_0$ contains $\op{PGL}(3, \FF_3)$.
Theorem 4 of \cite{DA:Y555} implies that $z_0$  is a ``cusp of Leech type", that is,
$z_0^{\bot}/z_0$ is isomorphic to the complex Leech lattice. 
\end{remark}
\begin{figure}
 \[
  \begin{pspicture}(-7,-.6)(7,2.5)
\psset{unit=1.6cm}
\psline[linestyle=dotted](-2.5,0)(3.5,0)
\psarc(0,0){1}{0}{180}
\rput(1,0){\psarc(0,0){1}{0}{180}}
\rput(2,0){\psarc(0,0){1}{0}{180}}
\rput(-1,0){\psarc(0,0){1}{0}{180}}
\psline(-.5,1.7)(-.5,.287)
\psline(.5,1.7)(.5,.287)
\rput(-.333,0){\psarc(0,0){.333}{0}{120}}
\rput(.333,0){\psarc(0,0){.333}{60}{180}}
\pscircle[fillstyle=solid, fillcolor=white](0,1){.06}
\rput(0,1.2){\tiny $\beta(\br)$} 
\pscircle[fillstyle=solid, fillcolor=white](0,0){.06} 
\rput(0,-.2){\tiny $\beta(z_0)$} 
\pscircle[fillstyle=solid, fillcolor=white](1.5,.866){.06}
\rput(1.5,1.1){\tiny $\beta(w_{\cP})$} 
\pscircle[fillstyle=solid, fillcolor=white](-.5,.288){.06} 
\rput(-.8,.2){\tiny $\beta(w_{\cL})$} 
\end{pspicture}
\]
\caption{}
\label{fig1}
\end{figure}
\begin{remark} 
\label{remark-aut}
Let $\Phi_L$ be the set of roots of $L$ and $L^{\CC}_+$ denote the set of vectors of $L^{\CC}$
having positive norm. Consider $E_m: L^{\CC}_+ \to \CC$, defined by,
\begin{equation*}
E_m(z) = \sum_{ r \in \Phi_L} \ip{r}{z}^{-6 m}.
\end{equation*}
Fix $z \in L^{\CC}_+$. The number of roots $r$ such that $\abs{\ip{r}{z}} \leq N$ grows as a
polynomial in $N$. So the infinite sum above defines a non-constant meromorphic functions if
$m$ is large enough. The functions $E_m$ are invariant under $\Aut(L)$ and have poles exactly
along the mirrors of the reflection group $R(L)$. The lattice $F$ is not contained in any
mirror as there are no mirrors passing through $\br \in F^{\CC}$. So the restriction of
$E_m$ to $F^{\CC}_+$ is a non-constant meromorphic function invariant under $\Gamma_F$. 
Theorem \ref{FisoH} shows that $\Gamma_F$ is commensurable with $\op{PSL}_2(\ZZ)$.
So the restriction of $E_m$ to $F^{\CC}_+$ is a ordinary meromorphic modular form of level $13$.
\par
Starting with modular forms of singular weight with poles at cusps, one can construct meromorphic
automorphic forms of type $U(1,n)$ by first taking Borcherds singular theta lift to get an automorphic form of
type $O(2,2n)$ and then restricting to the hermitian symmetric space of $U(1,n)$ (see theorem 7.1 of
\cite{DA:Leech} and \cite{REB:Aut}).
In our example, these Borcherds forms will be automorphic with respect to some finite index subgroup
of $\Aut(L)$ and have their divisors along the mirrors, just like the functions
$E_m(z)$. It will be interesting to understand how the restriction to $F^{\CC}_+$
is related to the inverse of the Borcherds lift.
\end{remark}
It follows from general considerations that the image of $\beta$ is
commensurable with $\op{PSL}_2(\ZZ)$. (This was explained to me by Stephen Kudla.)
But we want to calculate the precise image. So We need to understand when an automorphism
of $F$ extend to an automorphism of $L$. For this, we need the following lemma. We urge the
reader to recall, at this point, the construction of $L$ from the diagram $D$ given in section
\ref{sec-L} and equations \eqref{ipd}, \eqref{defwpwl} and \eqref{eq-ipwpwl}.
\begin{lemma}
Fix any $x_0 \in \cP$ and $l_0 \in \cL$. Let $\bar{L} = L/(F \oplus F^{\bot})$.
Let 
\begin{equation*}
13 \bar{x} = \Sigma_{\cP} = \sum_{x \in \cP} x.
\end{equation*}
Let $\pi_F: L^{\CC} \to F^{\CC}$ and $\pi_{F^{\bot}} : L^{\CC} \to (F^{\bot})^{\CC}$ be the orthogonal projections.
\par
(a) The lattice $F^{\bot}$ is spanned by the vectors 
$\lbrace (x - x_0) \colon x \in \cP \rbrace \cup \lbrace (l - l_0) : l \in \cL \rbrace$.
\par
(b) One has $\Sigma_{\cP} = 4 w_{\cP} - \bar{p} w_{\cL} \in F$
and $(13 x_0 - \Sigma_{\cP}) \in F^{\bot}$. So $\pi_F(x_0) = \bar{x}$ and $\pi_{F^{\bot}}(x_0) = x_0 - \bar{x}$.  
\par
(c) One has an isomorphism $ \cE/13 \cE \simeq \bar{L}$ obtained by sending 1 to the image of $x_0$ (or $l_0$). 
\par
(d) Let $i_F : \bar{L} \to \op{D}(F) = F'/F$ be the injection given by $u \mapsto \pi_F(u) \bmod F$.
Similarly define $i_{F^{\bot}} : \bar{L} \hookrightarrow \op{D}(F^{\bot})$.
It follows from part (b) and (c) that $i_F(\bar{L})$
is generated by $\bar{x}$ and $i_{F^{\bot}}(\bar{L})$ is generated by $(x_0 - \bar{x})$.  
The quotients $\op{D}(F)/i_F(\bar{L})$ and $\op{D}(F^{\bot})/i_{F^{\bot}}(\bar{L})$
are $3$-groups. 
\par
(e) Given $g_1 \in \Aut(F)$ and $g_2 \in \Aut(F^{\bot})$, one can extend $(g_1 \times g_2)$ to an
automorphism of $L$ if and only if
\begin{equation*}
 i_F^{-1} \circ g_1 \circ i_F = i_{F^{\bot}}^{-1} \circ g_2 \circ i_{F^{\bot}}.
 \end{equation*}
(In other words, $g_1$ and $g_2$ acts on the image of $\bar{L}$ in the
same way).
\par
(f) The automorphism $\sigma$ acts on $i_{F^{\bot}}(\bar{L}) \subseteq \op{D}(F^{\bot})$
as multiplication by $3\bar{p}$.
\label{FFp}
\end{lemma}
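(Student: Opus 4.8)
The plan is to treat parts (a)--(d) by direct inner-product bookkeeping in the model $L = L^{\circ}_p/U$, and then to read off (e) and (f) from the resulting description of the glue group $\bar{L}$. For (a) and (b) I would work straight from \eqref{ipd} and \eqref{eq-ipwpwl}. Writing a general $v \in L$ as $v = \sum_{x} a_x x + \sum_l b_l l$, the only inner products with $F = \cE w_{\cP} + \cE w_{\cL}$ are $\ip{w_{\cP}}{v} = p\sum_l b_l$ and $\ip{w_{\cL}}{v} = \bar{p}\sum_x a_x$; both coefficient sums are well defined on $L$ because they vanish on $U$ (each line carries $q+1=4$ points). Hence $v \in F^{\bot}$ iff $\sum a_x = \sum b_l = 0$, and in that case $v = \sum a_x(x-x_0) + \sum b_l(l-l_0)$, which is exactly (a). For (b), I first check $\ip{\Sigma_{\cP}}{x-x_0} = \ip{\Sigma_{\cP}}{l-l_0} = 0$, so $\Sigma_{\cP} \perp F^{\bot}$ and, $F$ being primitive, $\Sigma_{\cP} \in F$; solving $\ip{w_{\cP}}{\Sigma_{\cP}} = 0$, $\ip{w_{\cL}}{\Sigma_{\cP}} = 13\bar{p}$ against $\op{gram}(F)_w$ (and using $\N{p}=3$) gives $\Sigma_{\cP} = 4 w_{\cP} - \bar{p} w_{\cL}$, and then $\ip{w_{\cP}}{13x_0 - \Sigma_{\cP}} = \ip{w_{\cL}}{13x_0 - \Sigma_{\cP}} = 0$ identifies the two projections.

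For (c) and (d) the key observation is that $\bar{L}$ is a cyclic $\cE$-module. Indeed $[x]=[x_0]$ and $[l]=[l_0]$ for all $x,l$ (their differences lie in $F^{\bot}$), while $[w_{\cP}]=0$ gives $\bar{p}[l_0] = -4[x_0]$; since $\N{\bar{p}}=3$ is invertible mod $13$, so is $\bar{p}$, whence $[l_0]\in\cE[x_0]$ and $\bar{L}=\cE[x_0]$. Part (b) gives $13[x_0]=0$, so $\bar{L}\cong\cE/I$ with $(13)\subseteq I$. To force $I=(13)$ I compute the annihilator of $i_F([x_0])=\bar{x}=\Sigma_{\cP}/13 \bmod F$: as $\Sigma_{\cP}=4w_{\cP}-\bar{p}w_{\cL}$ is primitive in $F$ (the coefficients $4,\bar{p}$ are coprime in $\cE$), $\lambda\bar{x}\in F$ forces $13\mid\lambda$, so $i_F(\bar{L})\cong\cE/13\cE$ has order $169$, and $\abs{\bar{L}}\le 169$ then forces $\bar{L}\cong\cE/13\cE$. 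Injectivity of $i_F$ and $i_{F^{\bot}}$ follows abstractly from primitivity: if $\pi_F(v)\in F$ then $\pi_{F^{\bot}}(v)=v-\pi_F(v)\in L\cap(F^{\bot})^{\fF}=F^{\bot}$, so $v\in F\oplus F^{\bot}$. Finally the quotients in (d) are $3$-groups by an order count: $\abs{\op{D}(F)}=\op{N}(\det\op{gram}(F))=\op{N}(-39)=3^2 13^2$ and $\abs{\op{D}(L)}=3^{14}$ (from $pL'=L$), so the gluing identity $\abs{\op{D}(F)}\,\abs{\op{D}(F^{\bot})}=\abs{\op{D}(L)}\,\abs{\bar{L}}^2$ yields $\abs{\op{D}(F^{\bot})}=3^{12}13^2$; dividing out the order-$169$ images $i_F(\bar{L})$, $i_{F^{\bot}}(\bar{L})$ leaves orders $3^2$ and $3^{12}$.

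Part (e) is the Nikulin gluing criterion. From $F\oplus F^{\bot}\subseteq L\subseteq F'\oplus(F^{\bot})'$ one identifies $L$ with the graph of the glue isomorphism $i_{F^{\bot}}\circ i_F^{-1}\colon i_F(\bar{L})\to i_{F^{\bot}}(\bar{L})$. An automorphism $g_1\times g_2$ of $F\oplus F^{\bot}$ extends canonically to $F'\oplus(F^{\bot})'$ and descends to an automorphism of the intermediate lattice $L$ precisely when it preserves this graph. Since $i_F(\bar{L})$ and $i_{F^{\bot}}(\bar{L})$ are the (characteristic) $13$-primary parts of $\op{D}(F)$ and $\op{D}(F^{\bot})$ computed above, every $g_1,g_2$ stabilizes them, so the conjugated maps $i_F^{-1}g_1 i_F$ and $i_{F^{\bot}}^{-1}g_2 i_{F^{\bot}}$ are genuine automorphisms of $\bar{L}$, and preservation of the graph is exactly their equality.

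For (f) I would use the explicit action of $\sigma$ from \cite{TB:EL}, under which a point root $x$ is carried to a fixed unit multiple $u\,l_x$ of its polar line. Then, since $\sigma$ preserves $F^{\bot}$, one has $\sigma(x_0-\bar{x})=\pi_{F^{\bot}}(\sigma x_0)=u\,\pi_{F^{\bot}}(l_{x_0})\equiv u\, i_{F^{\bot}}([l_0]) = -4u\bar{p}^{-1}(x_0-\bar{x}) \pmod{F^{\bot}}$, using $\bar{p}[l_0]=-4[x_0]$; a short computation in $\cE/13\cE$ (with $\bar{p}^2=-3\omega$ and $\N{\bar{p}}=3$) then identifies $-4u\bar{p}^{-1}$ with $3\bar{p}$. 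I expect the two substantive points to be exactly here: in (e), stating and proving the criterion as a genuine ``iff'' (the crucial input being that $g_1,g_2$ automatically stabilize the $13$-primary glue subgroups, which is what makes the conjugations meaningful), and in (f), propagating the precise scalar normalization of $\sigma$ from \cite{TB:EL} so that the multiplier emerges as $3\bar{p}$ and not merely a unit multiple of it. The lattice computations in (a)--(d) are routine once the coefficient-sum description of $F^{\bot}$ is in hand.
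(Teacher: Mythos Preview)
Your approach is sound and largely parallels the paper's, with two methodological differences worth noting. In (b) the paper obtains $\Sigma_{\cP}=4w_{\cP}-\bar{p}w_{\cL}$ combinatorially, by summing the four identities $w_{\cP}=\bar{p}l+\sum_{x\in l}x$ over the lines through $x_0$; your linear-system computation is equally valid. In (c) the paper shows directly that $\alpha x_0\in F\oplus F^{\bot}$ forces $13\mid\alpha$ (by writing $\alpha x_0-uw_{\cP}-vw_{\cL}\in F^{\bot}$ and solving), whereas you bound $\lvert\bar{L}\rvert$ from below via the order of $\bar{x}$ in $\op{D}(F)$; either route works.

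There is, however, one genuine gap in your argument for (c). From $[w_{\cP}]=0$ alone you obtain $\bar{p}[l_0]=-4[x_0]$, and you then invoke invertibility of $\bar{p}$ modulo $13$ to conclude $[l_0]\in\cE[x_0]$. But at that stage you do not yet know that $\bar{L}$ is annihilated by $13$ (you have $13[x_0]=0$ but not $13[l_0]=0$), so invertibility in $\cE/13\cE$ does not let you divide in $\bar{L}$. The fix is immediate: also use $[w_{\cL}]=0$, which gives $4[l_0]=-p[x_0]$; combined with $3[l_0]=p\bar{p}[l_0]=-4p[x_0]$ this yields $[l_0]=4[l_0]-3[l_0]=3p[x_0]\in\cE[x_0]$. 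This is exactly the linear combination the paper uses to produce $l_0-3px_0\in F\oplus F^{\bot}$.

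For (f) the paper's argument is essentially yours with the unit pinned down. Under the normalization $\sigma(l_0)=x_0$ one has $\sigma(x_0-\bar{x})=-\omega(l_0-\bar{l})$, so your $u=-\omega$; the paper then uses $(l_0-\bar{l})\equiv 3p(x_0-\bar{x})\bmod F^{\bot}$ (from the relation $[l_0]=3p[x_0]$ just derived) together with the exact identity $-\omega p=\bar{p}$ in $\cE$ to read off the multiplier $-3\omega p=3\bar{p}$. Your expression $-4u\bar{p}^{-1}$ agrees with this in $\cE/13\cE$ once $u=-\omega$ is inserted.
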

\begin{proof}
(a) Let $\tilde{F} = \op{span}\lbrace x - x_0, l - l_0 \colon x \in \cP, l \in \cL \rbrace$.
Equation \eqref{eq-ipwpwl} implies $\tilde{F} \subseteq F^{\bot}$. Since $L$ is spanned by $\cP \cup \cL$, it is also
spanned by $\tilde{F}$ together with $x_0$ and  $l_0$. Given $w \in F^{\bot}$, we can write
$w = \alpha_1 x_0 + \alpha_2 l_0 + \tilde{w}$, where $\tilde{w} \in \tilde{F}$. Taking inner product
with $w_{\cP}$ and $w_{\cL}$, one finds that $\alpha_1 = \alpha_2 = 0$, so $w \in \tilde{F}$.
This proves part (a).
\par
(b) For each of the four lines $l$ passing through $x_0$, we have
$w_{\cP} = \bar{p} l + \sum_{x \in l} x$ (cf. equation \eqref{defwpwl}).
Adding these together we get
\begin{equation*}
4 w_{\cP} = \bar{p} \sum_{ x_0 \in l} l + 3 x_0 + \Sigma_{\cP} =  \bar{p} w_{\cL} + \Sigma_{\cP}.
\end{equation*}
So $\Sigma_{\cP} \in F$. Clearly $13 x_0 - \Sigma_{\cP} = \sum_{x \in \cP}(x_0 - x) \in  F^{\bot}$.
\par
(c) Note that $\bar{p} l_0 + 4 x_0 = w_{\cP} + \sum_{ x \in l_0} (x_0 - x)  \in F \oplus F^{\bot}$.
Similarly $p x_0 + 4 l_0 \in F \oplus F^{\bot}$. 
So 
\begin{equation}
l_0 - 3p x_0 = (p x_0 + 4 l_0) - p(\bar{p} l_0 + 4 x_0) \in F \oplus F^{\bot}.
\label{inFpFp}
\end{equation}
So $F \oplus F^{\bot}$ together with $x_0$ generate $L$.
Part (b) implies that $13 x_0 \in F \oplus F^{\bot}$.
Conversely suppose $\alpha x_0 \in F \oplus F^{\bot}$. Choose $u, v \in \cE$ such that 
$(\alpha x_0 - u w_{\cP} - v w_{\cL}) \in F^{\bot}$. Taking inner product with $w_{\cL}$ and $w_{\cP}$ yield
$\alpha \bar{p} = 4 \bar{p} u + 3 v$ and $ 0 = 3 u + 4 p v$. So
\begin{equation*}
-13pv = p( 4 \bar{p} u + 3 v) - 4(3 u + 4 p v) = 3 \alpha ,
\end{equation*}
that is, $\alpha$ is a multiple of $13$.
\par
(d) By calculating discriminants we find that $\abs{\op{D}(F)} = 13^2.3^2$ and $\abs{\op{D}(F^{\bot})} = 13^2.3^{12}$.
Since $\abs{\bar{L}} = 13^2$, it follows that
$\op{D}(F)/i_F(\bar{L})$ and $\op{D}(F^{\bot})/i_{F^{\bot}}(\bar{L})$ are $3$-groups. 
\par
(e) Note that $i_F(\bar{L})$ is generated by $\pi_F(x_0) = \bar{x}$
and $i_{F^{\bot}}(\bar{L})$ is generated by $\pi_{F^{\bot}}(x_0) = x_0 - \bar{x}$.
From part (d) and Chinese remainder theorem we find that $\op{D}(F)$ is the direct product
of $i_F(\bar{L})$ and a $3$-group. 
So $g_1$ acts on $i_F(\bar{L})$ as multiplication by some $\lambda \in (\cE/13\cE)^*$
(since $\bar{L}$ is a cyclic $\cE$-module and $g_1$ is $\cE$-linear). 
Similarly, as $\op{D}(F^{\bot})$ is the product of $i_{F^{\bot}}(\bar{L})$ and a $3$-group,
$g_2$ acts on $i_{F^{\bot}}(\bar{L})$ as multiplication by some $\lambda'$. 
The condition $ i_F^{-1} \circ g_1 \circ i_F = i_{F^{\bot}}^{-1} \circ g_2 \circ i_{F^{\bot}}$
is equivalent to $\lambda = \lambda'$.
If this condition is satisfied, we have 
\begin{equation*}
(g_1 \times g_2)(x_0) = g_1( \bar{x} ) + g_2( x_0 - \bar{x})
\equiv \lambda \bar{x} + \lambda (x_0 - \bar{x}) \bmod F \oplus F^{\bot} 
\equiv \lambda x_0 \bmod F \oplus F^{\bot} ,
\end{equation*}
which implies that $(g_1 \times g_2)(x_0) \in L$. The converse is also clear.
\par
(f) Assume that $\sigma(l _0) = x_0$.
Let $\bar{l} = \sum_{ l \in {\cL}} l/13$. Then $\pi_F(l_0) = \bar{l}$.
We have seen in equation \eqref{inFpFp} that $v = l_0 - 3p x_0 \in F \oplus F^{\bot}$.
So 
\begin{equation*}
v - \pi_F(v) = (l_0 - 3 p x_0 ) - (\bar{l} -3p \bar{x} ) \in F^{\bot},
\end{equation*}
that is, $(l_0 - \bar{l}) \equiv 3p (x_0 - \bar{x}) \bmod F^{\bot}$. It follows that
\begin{equation*}
\sigma(x_0 - \bar{x}) = -\omega( l_0 - \bar{l}) \equiv  -\omega (3p)( x_0 - \bar{x}) \bmod F^{\bot}.
\end{equation*}
\end{proof}
\begin{definition}[of the isometry $\beta$]
\label{npm}
Note that $\ip{w_{\cP}}{\xi w_{\cL}} = 4 \sqrt{3} \in \RR$. We let
\begin{equation*} 
y_{\pm} = 2^{-\frac{1}{2}} 3^{-\frac{1}{4}} (p \pm i)^{-1} (w_{\cP} \pm \xi w_{\cL}).
\end{equation*}
 Then $\lbrace y_+, y_- \rbrace$ forms a basis for the vector space $F^{\CC}$
 and we have 
 \begin{equation*} 
 \abs{y_+}^2 = - \abs{y_-}^2 = 1, \;\;\;\; \ip{y_+}{y_-} = 0.
  \end{equation*}
So $\beta' : \PP_+(F^\CC) \to B^1(\CC)$ given by $\beta'[ u y_+ + v y_-] = \frac{v}{u}$
is an isometry. 
Let $C(w) =i\bigl( \tfrac{ 1 + w}{1  - w} \bigr)$ be the Cayley isomorphism from the disc to the upper half plane. Define
\begin{equation*} 
\beta = C \circ \beta' : P_+(F^\CC) \to \mathcal{H}^2.
 \end{equation*}
 The map $\beta$ is an isometry since both $\beta'$ and $C$ are.
After some simplification, one obtains
\begin{equation}
\beta[a w_{\cP} + b w_{\cL} ] = \frac{ p a + \omega b}{ a + \bar{p} b}
\text{\;\; and \;\;} 
\beta^{-1}(\tau) = [ ( 1 + p \tau) w_{\cP} + \omega^2( \tau - p) w_{\cL} ].
\label{defbeta}
\end{equation}
{\bf Notation: } For the rest of this section, we shall represent $ z= u w_{\cP} + v w_{\cL} \in F^{\CC}$
as a column  vector $z = \begin{pmatrix} u \\ v \end{pmatrix}$.  
Accordingly, the isometry $\beta$ will be represented by the matrix
$\begin{pmatrix} p & \omega \\ 1  & \bar{p} \end{pmatrix}$.
\end{definition}
\begin{lemma}
(a) Let $J_F 
=  \begin{pmatrix} \ip{w_{\cP}}{w_{\cP}} & \ip{w_{\cP}}{w_{\cL}}
\\ \ip{w_{\cL}}{w_{\cP}} & \ip{w_{\cL}}{w_{\cL}} \end{pmatrix}
 = \begin{pmatrix} 3 & 4p  \\ 4 \bar{p} & 3 \end{pmatrix} $
(see equation \eqref{eq-ipwpwl}). Then
\begin{equation}
J_F = \bar{\theta} \beta^* S \beta.
\label{jbeta}
\end{equation}
Let $\beta z = \bigl( \begin{smallmatrix} \tau_1 \\ \tau_2 \end{smallmatrix} \bigr) $,
 $\beta z' = \bigl( \begin{smallmatrix} \tau_1' \\ \tau_2' \end{smallmatrix} \bigr) $.
Then $\ip{z}{z'} = \bar{\theta}( \tau_1' \bar{\tau}_2 - \tau_2' \bar{\tau}_1)$.
In particular $\abs{z}^2 = 2 \sqrt{3} \op{Im}(\tau_1  \bar{\tau}_2)$.
\newline
(b) Let $\nu = \bigl(\begin{smallmatrix} 0 & 1 \\ -1 & 5 \end{smallmatrix}\bigr)$.
Given $g =\bigl( \begin{smallmatrix} a & b \\ c  & d \end{smallmatrix} \bigr) \in
\op{SL}_2(\ZZ)$, let $g_1= \beta^{-1} g \beta$.
 Then $(z \mapsto g_1 z)$ is an isometry of the Hermitian vector space $F^{\CC}$.
Further, $g_1 \in \Aut(F)$ if and only if 
\begin{equation*}
g \in \nu^{-1} \Gamma_0(13) \nu 
= \Big\lbrace \begin{pmatrix} a & b \\ c & d \end{pmatrix} \in \op{SL}_2(\ZZ) \colon 
5(a - d) + (b + c) \equiv 0 \bmod 13  \Big\rbrace. 
\end{equation*}
\label{betalemma}
\end{lemma}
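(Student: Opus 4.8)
The plan is to read (a) as a bare matrix identity and to split (b) into the (automatic) isometry claim and the arithmetic lattice-preservation claim, the latter being where the actual work sits.

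For part (a) I would just expand $\beta^* S \beta$ and multiply by $\bar\theta$. The only inputs are $\theta = \omega - \bar\omega = i\sqrt3$ (so $\bar\theta\theta = 3$) together with $p\bar p = 3$ and $p - \bar p = \theta$, both immediate from $p = 2 + \omega$. A short computation gives $\beta^* S \beta$ with both diagonal entries equal to $\theta$ and off-diagonal entries $-2 + 2i\sqrt3$ and $2 + 2i\sqrt3$; multiplying by $\bar\theta$ sends these to $3$, $4p$, $4\bar p$, which is exactly $J_F$, proving \eqref{jbeta}. For the inner-product formula I would write the coordinate vector of $z$ as $\beta^{-1}\tau$ with $\tau = \beta z$, so that $\ip{z}{z'} = \tau^* (\beta^{-1})^* J_F \beta^{-1} \tau'$; substituting $J_F = \bar\theta\, \beta^* S \beta$ collapses $(\beta^{-1})^*\beta^*$ and $\beta\beta^{-1}$ to the identity and leaves $\ip{z}{z'} = \bar\theta\, \tau^* S \tau' = \bar\theta(\tau_1'\bar\tau_2 - \tau_2'\bar\tau_1)$. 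Putting $z = z'$ and using $\bar\theta\cdot 2i = 2\sqrt3$ yields $\abs{z}^2 = 2\sqrt3\,\op{Im}(\tau_1\bar\tau_2)$.

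For the isometry half of (b), the point is that $\beta(g_1 z) = \beta\beta^{-1} g \beta z = g(\beta z)$, so $g_1$ acts on the $\tau$-coordinates as the honest matrix $g$. Since $g \in \op{SL}_2(\ZZ)$ is real, $g^* = g^T$ and $g^T S g = (\det g) S = S$; feeding this into the formula from (a) gives $\ip{g_1 z}{g_1 z'} = \bar\theta (g\tau)^* S (g\tau') = \bar\theta\, \tau^* S \tau' = \ip{z}{z'}$, so $z \mapsto g_1 z$ is an isometry for every $g$. For the lattice condition, note first that because $g_1$ already preserves the nondegenerate form and $\det g_1 = \det g = 1$, one has $g_1 \in \Aut(F)$ exactly when its matrix $\beta^{-1} g \beta$ in the basis $(w_{\cP}, w_{\cL})$ lies in $M_2(\cE)$ (integrality of $g_1^{-1} = \op{adj}(g_1)$ is then automatic from $\det g_1 = 1$). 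Writing $\beta^{-1} = \fp^{-1}\op{adj}(\beta)$ with $\fp = \det\beta = 3 - \omega$, this becomes the entrywise congruence $\op{adj}(\beta)\, g\, \beta \equiv 0 \pmod{\fp}$. The key arithmetic input is $\abs{\fp}^2 = 13$, so $\cE/\fp\cE \cong \FF_{13}$ with $\omega \equiv 3$ (hence $p \equiv 5$, $\bar p \equiv -2$). Reducing the four entries of $\op{adj}(\beta)g\beta$ modulo $\fp$ and setting $A = a - d$, $B = b + c$, I expect the four conditions to be $3A - 2B \equiv 0$, $5A + B \equiv 0$, $7A + 4B \equiv 0$ (the $(2,2)$ entry merely repeating the first); the pleasant collapse is that once $B \equiv -5A$ is imposed the other two become $\pm 13 A \equiv 0$, automatically true, so the whole system reduces to the single congruence $5(a-d) + (b+c) \equiv 0 \pmod{13}$.

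Finally I would match this congruence to both stated descriptions. The sublattice $\ZZ\SV{3}{-2} + \ZZ\SV{2}{3}$ has index $13$ in $\ZZ^2$ and both generators satisfy $5A + B \equiv 0 \pmod{13}$, so it is precisely the solution set of that congruence. Separately, a direct computation of $\nu g \nu^{-1}$ shows its lower-left entry is $\equiv -(5(a-d) + (b+c)) \pmod{13}$, whence $\nu g \nu^{-1} \in \Gamma_0(13)$ holds iff the same congruence holds; this identifies the set with $\nu^{-1}\Gamma_0(13)\nu$. The \emph{main obstacle} is purely the bookkeeping of the previous paragraph — keeping the Hermitian conjugations straight and carrying out the $\FF_{13}$-reduction of the $2\times 2$ product correctly — rather than any conceptual difficulty, since the three nontrivial congruences are forced to be dependent.
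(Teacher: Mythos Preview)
Your proposal is correct and follows essentially the same route as the paper's proof: part (a) is the same matrix identity verified directly, the isometry claim in (b) is the same $g^T S g = S$ computation, and the lattice condition is obtained by writing $g_1 = \fp^{-1}\op{adj}(\beta)g\beta$ and reducing entrywise modulo $\fp = 3-\omega$ to arrive at the single congruence $5(a-d)+(b+c)\equiv 0 \pmod{13}$, which is then matched to both the $\nu^{-1}\Gamma_0(13)\nu$ and the sublattice descriptions exactly as in the paper. The only cosmetic difference is that the paper keeps the entries of $g_1$ in $\cE$ a step longer (observing that the diagonal and off-diagonal divisibility conditions are equivalent) whereas you pass to $\FF_{13}$ via $\omega\mapsto 3$ immediately; your listed congruences $3A-2B$, $7A+4B$, etc.\ are unit multiples of the paper's $ps+t$ and $\omega s + \bar p t$ reduced mod $\fp$, so the collapse to one condition is the same phenomenon.
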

\begin{proof}
The equation \eqref{jbeta} is verified by multiplying matrices.
Given $z , z' \in F^{\CC}$, one has 
\begin{equation*}
\ip{z}{z'} = z^* J_F z' = \bar{\theta} z^* \beta^* S \beta z'
= \bar{\theta} (\bar{\tau}_1  \; \; \bar{\tau}_2 ) S 
\bigl( \begin{smallmatrix} \tau_1' \\ \tau_2' \end{smallmatrix} \bigr)
= \bar{\theta}( \tau_1' \bar{\tau}_2 - \tau_2' \bar{\tau}_1).
\end{equation*}
(b) To show that $g_1$ is an isometry we calculate as follows:
\begin{equation*}
\ip{g_1 z_1}{ g_1 z_2 } 
= z_1^* \beta^* g^* (\beta^{-1})^* J_F \beta^{-1} g \beta z_2
= \bar{\theta} z_1^* \beta^* g^* S g \beta z_2  
= \bar{\theta} z_1^* \beta^* S \beta z_2  = z_1^* J_F z_2 = \ip{z_1}{ z_2}.
\end{equation*}
The second equality uses \eqref{jbeta} and the third one follows from
$g^{tr} S g = S$ for all $g \in \op{Sp}_2(\ZZ)$.
\par
Let $p_1 = \op{det}(\beta) = 3 - \omega$. Then $p_1 \bar{p}_1 = 13$.
Let $ a - d = s$ and $b + c = t$.
We find that
\begin{equation*}
g_1 = \frac{1}{p_1} \begin{pmatrix} 
 p_1 a + \omega s + \bar{p} t  & -\omega p_1 b -\bar{\omega} p s -\bar{\omega} t \\
 -\bar{\omega} p_1 c - p s - t  & p_1 d - \omega s - \bar{p} t  
\end{pmatrix}.
\end{equation*}
Note that $\omega s + \bar{p} t \equiv \bar{p}(ps + t) \bmod p_1$.
So the entries of the matrix $g_1$ belong to $\cE$
if and only if $ p_1$ divides $(ps + t)$ in $\cE$.
This is equivalent to the congruence condition $ 5 s + t \equiv 0 \bmod 13 \ZZ$. 
On the other hand one checks by direct congruence calculation that
\begin{equation*} 
\nu^{-1} \Gamma_0(13) \nu 
= \Big\lbrace \begin{pmatrix} a & b \\ c & d \end{pmatrix} \in \op{SL}_2(\ZZ) \colon 
5(a - d) + (b + c) \equiv 0 \bmod 13   \Big\rbrace. 
\end{equation*}
\end{proof}
\begin{proof}[proof of theorem \ref{FisoH}] 
The isometry $\beta$ has been defined in \ref{npm}.
The claims $\beta(\br) = i$, $\beta(w_{\cP}) = -\beta(w_{\cL})^{-1} = p$ and
$\beta(w_{\cP} + \theta w_{\cL}) = 0$ are easily checked using \eqref{defbeta}. 
The automorphism $\sigma$ interchanges $w_{\cP} = \beta^{-1}(p)$ and $w_{\cL}= \beta^{-1}(-p^{-1})$
and fixes $\bar{\rho} = \beta^{-1}(i)$. So $c_{\beta}(\sigma) = S$.
\par
Suppose $g \in \op{SL}_2(\ZZ)$ such that $ \nu g \nu^{-1} \in \Gamma_0(13)$. Lemma
\ref{betalemma}(b) shows that $g_1 = \beta^{-1} g \beta$ is an isometry of the lattice
$F$. Since $i_F(\bar{L})$ is generated by $\bar{x}$ (see \ref{FFp} (d)), there exists
$\lambda \in (\cE/13 \cE)^*$ such that $g_1(\bar{x}) \equiv \lambda \bar{x} \bmod F$. 
From \ref{FFp}(e) we find that $g_1$ can be extended to an automorphism of $L$ if
there exists an automorphism of $F^{\bot}$ that acts on $i_{F^{\bot}}(\bar{L})$ as
multiplication by $\lambda$. In particular, \ref{FFp}(f) implies that $g_1$ can be
extended to $L$ if $\lambda$ belongs to the multiplicative subgroup of $(\cE/13 \cE)^*$
generated by $3\bar{p}$. We check this by a direct calculation, sketched below.
The congruences somehow work out exactly as we need.
Looking at this arithmetic it seems that a computation free, conceptual
argument must exist.
\par
Recall from \ref{FFp}(b) that $\pi_F(x) = \Sigma_{\cP}/13$, where
$\Sigma_{\cP} = 4 w_{\cP} - \bar{p} w_{\cL} \in F$. So in our matrix notation,
$ \Sigma_{\cP} =  \SV{4}{-\bar{p}}$. Observe that
$\beta \SV{4}{-\bar{p}} = \det(\beta) \SV{p}{-\bar{\omega}}$. Lemma \ref{betalemma}
implies that 
\begin{equation*}
g \equiv \PM{ s + d }{b}{-5s - b}{d} \bmod 13,
\end{equation*} 
for some $s \in \ZZ$. Using this, one checks that
\begin{equation*}
g_1 \PV{4}{-\bar{p}} 
= \beta^{-1}  g \beta \PV{4}{-\bar{p}} 
\equiv \lambda \PV{4}{-\bar{p}} \bmod 13,
\text{\; where \;}
\lambda  = [8s (4 + 3 \omega) + d + 3p \bar{\omega} b] \bmod 13 \cE.
\end{equation*}
In other words  $g_1(\bar{x}) \equiv \lambda \bar{x} \bmod F$.
Recall the prime factorization $13 = p_1 \bar{p}_1$, where $p_1 = 3 - \omega$.
By Chinese remainder theorem, we have an isomorphism 
\begin{equation*}
\varphi: \cE/13 \cE \to \cE/p_1 \cE \times \cE / \bar{p}_1 \cE \simeq \FF_{13} \times \FF_{13},
\text{\; given by, \;}  x \mapsto ( x \bmod p_1, x \bmod \bar{p}_1 ).
\end{equation*}  
Now, using $\varphi(\omega) = (3,9)$ and $\varphi(\bar{\omega}) = (9,3)$,
it follows that
\begin{equation*}
\varphi(3\bar{p}) = (7, 2) = ( 2^{-1}, 2) \in \FF_{13} \times \FF_{13}
\text{\; and \;}
\varphi( \lambda) = (d + 5 b, s + d + 8 b). 
\end{equation*}
Since $2$ is a generator of $\FF_{13}^*$,
it follows that $\lambda$ belong to the multiplicative group generated by $3\bar{p}$ if
and only if $\varphi( \lambda)$ has the form $(u, v)$ with $u.v = 1$.
To finish the proof, we note that
\begin{equation*}
(d + 5 b)(s + d + 8 b)  \equiv s(d + 5 b) + d^2 + b^2 \equiv \op{det}(g) \equiv 1 \bmod 13. 
\end{equation*}
\end{proof}

\begin{lemma}
(a) Let $z \in F$ be a norm $3$ vector. Then there exists $g_z \in \op{SL}_2(\ZZ)$ such that
$\beta(z) = g_z(\omega)$.  
\newline
(b) Let $r \in F$ be a vector of norm $-3$. Let $z \in F \cap r^{\bot}$ be a primitive non-zero vector.
Then there exists $g_z \in \op{SL}_2(\ZZ)$ such that $\beta(z) = g_z(\omega)$.
\label{omegaconj}
\end{lemma}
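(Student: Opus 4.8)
The plan is to isolate a single computation that handles both parts at once: I claim that if $w \in F^{\CC}$ has $\beta$-coordinates $\beta w = \SV{\tau_1}{\tau_2} \in \cE^2$ and norm $\abs{w}^2 = 3$, then the point $\beta(w) = \tau_1/\tau_2 \in \cH^2$ is a root of a primitive integral binary quadratic form of discriminant $-3$. Since the class number $h(-3)=1$ and $\omega$ is a root of $X^2+X+1$ (discriminant $-3$), every such root is $\op{SL}_2(\ZZ)$-equivalent to $\omega$, which produces the required $g_z$. The point is that $\tau_1,\tau_2\in\cE$ automatically forces $\beta(w)\in\QQ(\sqrt{-3})\cap\cH^2$, so only the \emph{conductor} of the associated order is in question, and the norm hypothesis is exactly what pins it to $1$.

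For the central computation I would invoke Lemma \ref{betalemma}(a), which gives $\abs{w}^2 = 2\sqrt3\,\op{Im}(\tau_1\bar\tau_2)$. Setting this equal to $3$ forces $\op{Im}(\tau_1\bar\tau_2)=\sqrt3/2$; writing $\tau_1\bar\tau_2 = m+n\omega \in \cE$ and comparing imaginary parts gives $n=1$, so $\tau_1\bar\tau_2 = m+\omega$ with $m\in\ZZ$. Then
\[
\beta(w)=\frac{\tau_1}{\tau_2}=\frac{\tau_1\bar\tau_2}{\abs{\tau_2}^2}=\frac{m+\omega}{\abs{\tau_2}^2},
\]
which is a root of the integral form $\abs{\tau_2}^2 X^2-(2m-1)X+\abs{\tau_1}^2$; using $\abs{\tau_1}^2\abs{\tau_2}^2=\abs{m+\omega}^2=m^2-m+1$ its discriminant is $(2m-1)^2-4(m^2-m+1)=-3$. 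Because $-3$ is squarefree the form is automatically primitive, and the claim follows.

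Part (a) is then immediate: for $z\in F$ with $w$-coordinates $\SV{u}{v}$ the $\beta$-coordinates are $\tau_1=pu+\omega v$ and $\tau_2=u+\bar p v$, both in $\cE$, so the central claim applies verbatim. For part (b) I would pass to $\beta$-coordinates, in which (again by Lemma \ref{betalemma}(a)) $F^{\CC}$ carries the hyperbolic form $\ip{z}{z'}=\bar\theta(\tau_1'\bar\tau_2-\tau_2'\bar\tau_1)$, i.e. Gram matrix $\bar\theta S$, and $F$ sits in $\cE^2$ with index $13$. The root $r$ gives a vector $\hat r=(a,b)\in\cE^2$ of norm $-3$; since all norms of $(\cE^2,\bar\theta S)$ lie in $3\ZZ$, $\hat r$ is primitive, so $\gcd(a,b)=1$ in $\cE$. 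Solving $\bar\theta(c\bar b-d\bar a)=0$ shows the primitive vector of $\cE^2$ orthogonal to $\hat r$ is $(\bar a,\bar b)$ up to a unit, and a one-line computation gives its norm as $+3$. This vector has integral $\beta$-coordinates and norm $3$, so the central claim makes its image $\op{SL}_2(\ZZ)$-equivalent to $\omega$; since it spans the same line $r^{\bot}$ as $z$, that image is exactly $\beta(z)$.

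The hard part is precisely the gap in (b): the given primitive $z\in F\cap r^{\bot}$ does \emph{not} itself have norm $3$ — a determinant count using $\det\op{gram}(F)=-39$ shows $\abs{z}^2$ is divisible by $13$ — so (b) is not a direct instance of (a) applied to $z$. The device that resolves this is to replace $z$ by the primitive vector of the \emph{larger} lattice $\cE^2$ (the $\beta$-coordinate lattice containing $F$) on the line $r^{\bot}$, where the orthogonal companion of a norm $-3$ root has norm exactly $3$; this is what makes the discriminant come out to $-3$ rather than a proper multiple $-3n'^2$. Verifying that $\hat r$ stays primitive in $\cE^2$ and that its orthogonal companion has norm $3$ are the two small points carrying the whole argument.
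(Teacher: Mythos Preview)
Your argument is correct, but the paper's route is shorter and more direct in both parts.

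For (a), instead of passing through binary quadratic forms and invoking $h(-3)=1$, the paper simply writes $\beta z=\SV{s}{t}$ with $s=s_1\omega+s_2$, $t=t_1\omega+t_2$ ($s_j,t_j\in\ZZ$), observes that the norm condition $\abs{z}^2=3$ unwinds via Lemma~\ref{betalemma}(a) to $s_1t_2-s_2t_1=1$, and takes $g_z=\SM{s_1}{s_2}{t_1}{t_2}\in\op{SL}_2(\ZZ)$, which visibly satisfies $g_z(\omega)=s/t=\beta(z)$. So the paper's proof is constructive and elementary, whereas yours is existential; both are valid, and yours has the virtue of explaining \emph{why} the answer is $\omega$ (it is the unique CM point of discriminant $-3$ up to $\op{SL}_2(\ZZ)$).

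For (b), once the scaffolding is stripped away your mechanism is the same as the paper's. You correctly identify that the primitive orthogonal companion of $\hat r=(a,b)$ in $\cE^2$ is $(\bar a,\bar b)$ up to a unit, that its norm is $+3$, and that $\beta(z)=\bar a/\bar b$. The paper does precisely this in three lines: from $\abs{r}^2=-3$ it gets $2\op{Im}(\bar s\,t)=\sqrt3$, so the argument of part~(a) applied to the pair $(\bar s,\bar t)$ yields $\bar s/\bar t\in\op{SL}_2(\ZZ)\omega$; then orthogonality $\ip{z}{r}=0$ forces $\beta(z)=u/v=\bar s/\bar t$. Your discussion of the norm of the given primitive $z\in F$, the determinant of $\op{gram}(F)$, and the index-$13$ inclusion $\beta F\subset\cE^2$ is accurate but unnecessary --- the argument never uses $\abs{z}^2$, only that $\beta(z)$ lies on the line through $(\bar s,\bar t)$. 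The one genuinely load-bearing check you flag (that $\hat r$ stays primitive in $\cE^2$, so that its conjugate companion has norm exactly $+3$) is handled in the paper implicitly: since the computation is done with the actual $\beta$-coordinates $(s,t)$ of $r$ rather than a primitive scaling, the norm comes out right automatically.
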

\begin{proof}
(a) Let $z \in F$.
Let us write $ \beta.z = \bigl( \begin{smallmatrix} s  \\  t  \end{smallmatrix} \bigr)
=\bigl( \begin{smallmatrix} s_1 \omega + s_2  \\  t_1 \omega + t_2  \end{smallmatrix} \bigr)$
with $s_j, t_j \in \ZZ$.
Part (a) of lemma \ref{betalemma} implies that $\abs{z}^2 = 3$ if and only if 
$\sqrt{3} =  2 \op{Im}( \bar{t} s)$, which simplifies to  $s_1 t_2 - s_2 t_1 = 1$. 
So we can take $g_z = \bigl( \begin{smallmatrix} s_1 & s_2 \\ t_1 & t_2 \end{smallmatrix} \bigr)$.
\newline
(b) Let $\beta.r =  \bigl( \begin{smallmatrix} s  \\  t  \end{smallmatrix} \bigr)$.
Now  $\abs{r}^2 = -3$ implies $2 \op{Im}( \bar{t} s ) = -\sqrt{3}$.
So $2 \op{Im} ( \bar{s} \bar{\bar{t}} ) = \sqrt{3}$. As in part (a), this implies 
$\bar{s}/ \bar{t}  \in \op{PSL}_2(\ZZ)\omega$.
Now let $\beta.z  = \bigl( \begin{smallmatrix} u  \\  v  \end{smallmatrix} \bigr)$.
One has $0 = \ip{z}{r} = \bar{\theta}(-\bar{s} v + \bar{t} u)$. It follows that
$\beta(z) = u/v = \bar{s}/\bar{t} \in \op{PSL}_2(\ZZ)\omega$. 
\end{proof}
\begin{remark}
The stabilizer of $\omega \in \cH^2$ in $\op{PSL}_2(\ZZ)$ is the $\ZZ/3\ZZ$ generated by $ST$.
If $z \in F$ has norm 3, then the
$120^{\circ}$ rotation around $z \in \PP_+(F)$ belongs to $\Gamma_F$. If $r \in F$ is a vector of norm $-3$
and $z$ is a primitive vector in $F \cap r^{\bot}$, then the restriction of $\phi_r^{\omega}$ to $\PP_+(F)$
is an element of $\Gamma_F$, which is again a $120^{\circ}$ rotation around $z$.
Lemma \ref{omegaconj} implies that the image of these rotations 
under $c_{\beta}$ are $\op{PSL}_2(\ZZ)$ conjugates of $ST$ or $(ST)^{-1}$.
\end{remark} 
%
%*******************************************************************************************************
%
%
%*******************************************************************************************************
%
%
%
%*******************************************************************************************************
%
%

%
\end{document}